\documentclass[12pt]{amsart}

\usepackage[all]{xy}
\usepackage{amssymb}
\usepackage{amsmath}
\usepackage{stmaryrd}
\usepackage{bbm}
\usepackage{txfonts}
\usepackage{tikz-cd}
\usepackage{url}
\usepackage{etoolbox}
\swapnumbers

\usepackage[
    backend=biber,       
    style=numeric,       
    doi=false,            
    eprint=true,         
    url=true             
]{biblatex}
\makeatletter
\patchcmd{\@addmarginpar}{\ifodd\c@page}{\ifodd\c@page\@tempcnta\m@ne}{}{}
\makeatother
\reversemarginpar

\usepackage[OT2,T1]{fontenc}
\DeclareSymbolFont{cyrletters}{OT2}{wncyr}{m}{n}
\DeclareMathSymbol{\Sha}{\mathalpha}{cyrletters}{"58}

\theoremstyle{definition}
\newtheorem*{Proposition*}{Proposition}

\theoremstyle{definition}
\newtheorem*{Theorem*}{Theorem}

\theoremstyle{definition}
\newtheorem*{Corollary*}{Corollary}

\theoremstyle{definition}
\newtheorem*{Conjecture*}{Conjecture}

\theoremstyle{definition} 
\newtheorem{ssProposition}[subsubsection]{Proposition}

\theoremstyle{definition}

\theoremstyle{definition} 

\newtheorem{ssDefinition}[subsubsection]{Definition}

\theoremstyle{definition}

\theoremstyle{definition}

\theoremstyle{definition}

\theoremstyle{definition}

\theoremstyle{definition}

\theoremstyle{definition} 
\newtheorem{ssTheorem}[subsubsection]{Theorem}

\theoremstyle{definition} 
\newtheorem{ssLemma}[subsubsection]{Lemma}

\theoremstyle{definition}

\theoremstyle{definition}

\theoremstyle{definition} 
\newtheorem{ssCorollary}[subsubsection]{Corollary}

\theoremstyle{definition}

\theoremstyle{definition} 
\newtheorem{ssRemark}[subsubsection]{Remark}

\theoremstyle{definition} 
\newtheorem{ssExample}[subsubsection]{Example}

\theoremstyle{definition} 
\newtheorem{ssExamples}[subsubsection]{Examples}

\theoremstyle{definition}

\theoremstyle{definition}

\theoremstyle{definition}

\theoremstyle{definition}

\let\oldmarginpar\marginpar
\renewcommand\marginpar[1]{\-\oldmarginpar[\raggedleft\footnotesize #1]%
{\raggedright\footnotesize #1}}

\newcommand{\set}[2]{\left\{ #1 \; \middle| \; #2 \right\} }

\newcommand{\Rep}{\operatorname{\bf{Rep}}}
\newcommand{\Vect}{\operatorname{\bf{Vect}}}

\newcommand{\from}{\leftarrow}
\newcommand{\xto}{\xrightarrow}
\newcommand{\xfrom}{\xleftarrow}
\newcommand{\surj}{\twoheadrightarrow}

\newcommand{\cok}{\operatorname{cok}}
\newcommand{\gr}{\operatorname{gr}}

\renewcommand{\Im}{\operatorname{Im}}

\newcommand{\Spec}{\operatorname{Spec}}

\newcommand{\Hom}{\operatorname{Hom}}
\newcommand{\Ext}{\operatorname{Ext}}

\newcommand{\Lie}{\operatorname{Lie}}

\newcommand{\Mod}{\operatorname{Mod}}

\newcommand{\m}[1]{\mathrm{#1}}
\newcommand{\fk}[1]{\mathfrak{#1}}

\newcommand{\bb}[1]{\mathbb{#1}}
\newcommand{\cl}[1]{\mathcal{#1}}

\newcommand{\la}{\lambda}
\newcommand{\ka}{\kappa}
\newcommand{\si}{\sigma}

\newcommand{\ga}{\gamma}
\newcommand{\al}{\alpha}
\newcommand{\be}{\beta}
\newcommand{\om}{\omega}

\newcommand{\ep}{\epsilon}
\newcommand{\de}{\delta}

\newcommand{\GL}{\operatorname{GL}}

\newcommand{\Gm}{{\mathbb{G}_m}}
\newcommand{\Qp}{{\QQ_p}}

\newcommand{\Cc}{\mathcal{C}}

\newcommand{\ZZ}{\bb{Z}}
\newcommand{\CC}{\bb{C}}
\newcommand{\GG}{\mathbb{G}}

\newcommand{\NN}{\bb{N}}
\newcommand{\Ss}{\cl{S}}
\newcommand{\QQ}{\bb{Q}}
\newcommand{\RR}{\bb{R}}
\newcommand{\PP}{\bb{P}}
\newcommand{\pP}{\fk{p}}

\newcommand{\Hh}{\mathcal{H}}
\renewcommand{\AA}{\bb{A}}

\newcommand{\Vv}{\mathcal{V}}

\newcommand{\Oo}{\mathcal{O}}
\newcommand{\Kk}{\mathcal{K}}
\newcommand{\Tt}{\mathcal{T}}
\newcommand{\Aa}{\mathcal{A}}
\newcommand{\Ee}{\mathcal{E}}

\newcommand{\Dd}{\mathcal{D}}

\newcommand{\Xx}{\mathcal{X}}

\newcommand{\fkS}{\fk{S}}
\newcommand{\fkM}{\fk{M}}

\newcommand{\Cpx}{\operatorname{Cpx}}

\newcommand{\inv}{^{-1}}

\newcommand{\areq}{\ar@{=}}
\newcommand{\suphook}{\ar@{^(->}}
\newcommand{\subhook}{\ar@{_(->}}

\newcommand{\smses}[6]
{
\[
\xymatrix{
1 \ar[r] &
#1 \ar[r]_-{#2} &
#3 \ar[r]_-{#4} &
#5 \ar[r] \ar@/_1.5pc/[l]_-{#6} &
1
}
\]
}

\newcommand{\thrpl}{\PP^1 \setminus \{0,1,\infty\}}

\newcommand{\Fphi}{{\m{F}\phi} }

\newcommand{\dR}{{\rm {dR}}}

\newcommand{\et}{{\textrm {\'et}}}

\newcommand{\PSh}{\operatorname{PSh}}

\newcommand{\DM}{\operatorname{DM}}

\newcommand{\Sm}{\operatorname{Sm}}
\newcommand{\op}{^\m{op}}

\newcommand{\one}{\mathbbm{1}}

\newcommand{\cdga}{\operatorname{cdga}}

\newcommand{\Ind}{\operatorname{Ind}}

\newcommand{\colim}{\operatorname{colim}}

\newcommand{\Set}{\operatorname{Set}}

\newcommand{\Aug}{\operatorname{Aug}}

\newcommand{\opnm}{\operatorname}

\newcommand{\CAlg}{\opnm{CAlg}}

\newcommand{\Map}{\opnm{Hom}}

\newcommand{\Sym}{\opnm{Sym}}

\newcommand{\Fib}{\opnm{Fib}}

\newcommand{\new}{\newcommand}

\new{\spar}
[1]{\medskip\refstepcounter{subsection}\noindent\arabic{section}.\arabic{subsection}.\label{#1}}

\new{\sspar}{\subsubsection}

\new{\oo}{$\infty$}

\new{\Aaug}{\opnm{\Aa\textit{ug}}}
\new{\Ddev}{\opnm{\Dd\textit{ev}}}

\new{\SH}{\cl{SH}}

\new{\Aff}{\opnm{Aff}}

\new{\vectorspace}{{\textit{vector space}}}

\new{\MFphi}{{\m{MF}^\phi}}

\new{\HoT}{\mathbf{Ho\hspace{-.1em}T}}
\new{\SHoT}{\mathbf{SHo\hspace{-.1em}T}}

\new{\PrLst}{\m{Pr}^L_\m{st}}

\new{\fkI}{\mathfrak{I}}

\new{\Ff}{\mathcal{F}}

\new{\qandq}{\quad \text{and} \quad}

\new{\Sch}{\m{Sch}}
\new{\Schft}{\m{Sch}^\m{ft}}
\new{\SchftZ}{\m{Sch}^\m{ft}_Z}
\new{\SchftZop}{(\m{Sch}^\m{ft}_Z)^\m{op}}

\new{\Hhom}{\Hh\hspace{-.3ex} om}

\new{\VV}{\mathbb{V}}
\new{\EE}{\mathbb{E}}

\new{\vectorspaces}{\textit{vector spaces}}

\new{\La}{\Lambda}

\new{\coeq}{\opnm{coeq}}

\new{\Alg}{\opnm{Alg}}
\new{\LMod}{\opnm{LMod}}
\new{\coAlg}{\opnm{coAlg}}

\new{\Tw}{\opnm{Tw}}
\new{\dDD}{\mathfrak{D}}
\new{\DKsz}{\dDD_\m{Ksz}}

\new{\Nn}{\mathcal{N}}

\new{\SHom}{\opnm{SHom}}

\title{Coarse moduli of motivic augmentations}

\author{Ishai Dan-Cohen}
\address{Ben Gurion University of the Negev}
\email{ishaida@bgu.ac.il}

\thanks{\textbf{Grant acknowledgements:} This work was supported by ISF grants 726/17 and 621/21, and by zukunft.niedersachsen, the joint science funding program of the
Lower Saxony Ministry of Science and Culture and the Volkswagen
Foundation. \textbf{Political disclaimer:} I ask not to be considered responsible for the actions of any government that does not fully embrace the principles of democracy. }

\date{\today}

\begin{document}

\begin{abstract}
A variety $X$ over a suitable base $Z$ gives rise to a highly structured algebra $C^*(X)$ in motives over $Z$. In turn, a $Z$-point gives rise to an augmentation $C^*(X) \to \one$. This assignment $X(Z) \to \Aug \big( C^*(X) \big)$ factors the so-called ``unipotent Kummer map'' to torsors under the unipotent fundamental group in realizations. In one direction, this suggests the possibility of \textit{performing} Chabauty-Kim theory motivically without waiting for a motivic t-structure. In a different (largely independent) direction, we may hope to extract arithmetic information for use in bounding sets of integral points from the full rational homotopy type going beyond $\pi_1$. In both directions, the coarse space for motivic augmentations $\Aug \big( C^*(X) \big)$ would benefit from a structure of finite type $\QQ$-variety, and similarly, the coarse space $\Aug \big( C^*_{F\phi}(X) \big)$ of augmentations in filtered $\phi$ modules would benefit from a structure of finite type $\Qp$-variety. We establish two criteria for representability, compute these spaces in several examples, and construct a comparison with Selmer varieties. Finally, we demonstrate how these constructions lead to K-theoretic finiteness criteria in an example. 
\end{abstract}

\maketitle

\setcounter{tocdepth}{2}
\tableofcontents

\raggedbottom
\SelectTips{cm}{11}

\section{Introduction}
\label{Sec:Int}

A variety $X$ over a base scheme $Z$ gives rise to a highly structured algebra $C^*(X)$ in the stable \oo-category $\Dd_M = \DM(Z, \QQ)$ of mixed motivic complexes \cite{Iwanari, rmps, mrmht, DCBetts}\footnote{Unfortunately, the published version of \cite{rmps} contains many errors of grammar and idiom. We recommend the reader consult the arXiv preprint \cite{rmps_arXiv}}. When $X$ admits a $\CC$-point, the associated real Betti realization $C^*_\m{Bett}(X)_\RR$ may be identified with the cdga of smooth forms on $X(\CC)$. Better: the \textit{rational} Betti realization $C^*_\m{Betti}(X)_\QQ$ may be identified with the polynomial de Rham complex of $X(\CC)$ of classical rational homotopy theory \cite{Sullivan}. Thus, $C^*(X)$ may be viewed as the rational homotopy type of $X(\CC)$ equipped with extra motivic structures (mixed Hodge structure, Galois action, crystalline Frobenius, ...). It has long been known that (after fixing a suitable base point) one may retrieve the rational prounipotent completion of the fundamental group of $X(\CC)$ from the cdga of polynomial differential forms, so $C^*(X)$ also remembers the former, along with extra motivic structures. 

These highly structured algebras are somewhat magical. For instance, when $Z = \Spec \ZZ$ and $X = \thrpl$, the underlying object 
\[
C^*(X) \simeq \QQ(0) \oplus \QQ(-1)[-1]^{\oplus 2}
\]
is merely a sum of shifts of Tate motives. Moreover, the multiplication map
\[
\big( \QQ(0) \oplus \QQ(-1)[-1]^{\oplus 2} \big)
\otimes
\big( \QQ(0) \oplus \QQ(-1)[-1]^{\oplus 2} \big)
\to
\QQ(0) \oplus \QQ(-1)[-1]^{\oplus 2}
\]
is (in a sense which can be made precise) trivial. Yet the higher homotopies hidden in its algebra structure essentially remember the periods of the unipotent fundamental group, i.e. multiple zeta values. 

Since $C^*(X)$ is contravariant in $X$, and since $C^*(Z) = \one$ is the trivial algebra, a section of the structure morphism $X \to Z$ gives rise to an augmentation $C^*(X) \to \one$. When $Z$ is the spectrum of a number field $K$ or an open subscheme of $\Spec \Oo_K$ (an ``open integer scheme''), the resulting map
\[
X(Z) \to \Aug\big(C^*(X)\big)
\]
factors the ``unipotent Kummer map'' to the set of torsors under the prounipotent completion of $\pi_1$ used in Chabauty-Kim theory to bound the set $X(Z)$ of $Z$-valued (``$Z$-integral'', ``$Z$-rational'') points. 

There are several reasons to try to develop a version of Chabauty-Kim theory based on augmentations in place of $\pi_1$-torsors. According to a conjecture of Deligne from the 80's \cite{Deligne89}, when the base $Z$ admits a $\CC$-valued point, and $X$ is equipped with a $Z$-integral base-point, the prounipotent completion of the Betti fundamental group of $X(\CC)$ \textit{is motivic}, i.e. comes from a prounipotent group object (called \emph{the unipotent fundamental group}) in the Tannakian category of mixed motives, by applying Betti realization. Indeed, our ability to retrieve the prounipotent completion of the topological fundamental group of $X(\CC)$ along with extra motivic structures functions as a conditional proof of this conjecture. 

Unfortunately, we don't have a fully functional Tannakian (or abelian) category of mixed motives, and instead of \emph{the} unipotent fundamental group itself, practitioners of Chabauty-Kim theory must make do with its \emph{realizations} in the various Weil cohomology theories ($\QQ$-Hodge, $\Qp$-\'etale, filtered-$\phi$,...). By contrast, the space of augmentations is available now, with no need to wait for the elusive motivic t-structure. 

Thus, in one direction, our purpose here is to tighten the connection between motives and arithmetic by short-circuiting the space of Galois equivariant $\pi_1$-torsors of Chabauty-Kim theory. As a sample application in this direction, we're able to obtain an (in principle) effective finiteness result: let $Z$ be the complement of $s$ primes in $\Spec \ZZ$, let $X \to Z$ be the complement of the zero-section in an elliptic scheme of Mordell-Weil rank $r$, and let
\[
k^{(2)}_2(X) = \dim_\QQ K^{(2)}_2(X),
\]
the dimension of an Adams eigenspace in a rationalized Quillen $K$-group. If 
\[
r+s+k^{(2)}_2(X) < 4
\]
then $X(Z)$ is finite.\footnote{If we don't ask for effectivity, then this is just a special case of Siegel's theorem. And even if we do ask for effectivity, then we can turn to ``classical'' Chabauty-Kim theory, which would produce a similar result, with the K-group replaced by an analogous Galois cohomology group. The novelty here is the direct route from K-theory to arithmetic.}

In a different direction, working with augmentations means retaining more information than the associated $\pi_1$-torsors. This information may be particularly valuable for higher dimensional varieties. For the most part, curves over $Z$ whose sets of $Z^o$-integral points for open $Z^o \subset Z$ are \emph{degenerate} (that is, contained in a proper subvariety) have nonabelian $\pi_1$.\footnote{Technically speaking, there is one counterexample.} But this is far from the case for higher dimensional varieties. Thus, to have any hope of applying the methods of Chabauty-Kim theory to higher dimensional varieties, we must wean ourselves from our dependence on $\pi_1$. 

While these directions may be largely independent of one another, they do have something in common: both would benefit significantly from the representability of augmentation spaces. To be precise, let $\Cc_M = \CAlg \Dd_M$ be the \oo-category of commutative algebras in $\Dd_M$. We can associate to $C^*(X)$ a presheaf of homotopy types on the category of affine $\QQ$-schemes
\[
\Aaug \big( X \big):
\Aff_\QQ\op
\to
\HoT
\]
such that
\[
\Aaug \big( X \big)(R) = 
\Hom_{\Cc_M} \big( C^*(X), R \otimes \one \big)
\]
for any $\QQ$-algebra $R$. Then our provisional goal is to understand when the presheaf of connected components
\[
\Aug(X) = \pi_0 \Aaug(X)
\]
is representable by a finite-type $\QQ$-scheme. Results of Ancona et al. \cite{AnconaHuberII} and Iwanari \cite{Iwanari} combined with joint work with L. Alexander Betts \cite[Theorem 7.2]{DCBetts} may be used to show that this is sometimes literally the case:

\begin{ssTheorem}
\label{IN11}
Let $Z$ be an open subscheme of $\Spec \Oo_K$ for $K$ a number field, let $A \to Z$ be an abelian scheme, and let $L^\star \to A$ be a $\Gm$-bundle. Then the augmentation space is a trivial torsor
\[
\xymatrix{
\VV^\lor \Gm(Z)_\QQ
\ar@{}[r]|-{\rotatebox{-90}{\scalebox{2}{$\circlearrowright$}}}
&
\Aug(L^*) \ar[d]
\\
& \VV^\lor A(Z)_\QQ
}
\]
under a vector-group over a vector-group.
\end{ssTheorem}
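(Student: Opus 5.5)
The plan is to compute $C^*(L^\star)$ explicitly as a commutative algebra in $\Dd_M$, reduce to a free (or square-zero) model, and then read off its augmentations.

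\emph{Step 1: the motive of $A$.} By Ancona et al.\ \cite{AnconaHuberII} and Iwanari \cite{Iwanari}, $C^*(A)$ is formal, and in fact free as a commutative algebra:
\[
C^*(A)\simeq \Sym\big(h^1(A)[-1]\big),
\]
where $h^1(A)$ is the Chow motive of $A$ in degree one. Since $\Sym$ is left adjoint to the forgetful functor, we get
\[
\Aaug(A)(R)\simeq \Hom_{\Dd_M}\big(h^1(A)[-1],R\otimes\one\big).
\]
Taking $\pi_0$ gives $\Ext^1\big(h^1(A),\one\big)\otimes R$. By the motivic Abel--Jacobi computation of \cite[Theorem 7.2]{DCBetts}, this Ext group is $A(Z)_\QQ$ (up to the appropriate dual, i.e.\ the $\VV^\lor$ in the statement). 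So $\Aug(A)=\VV^\lor A(Z)_\QQ$, and it is representable as a vector group.

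\emph{Step 2: the $\Gm$-bundle.} Here $C^*(L^\star)$ is a relative Gysin / Koszul-type extension over $C^*(A)$. Write
\[
C^*(L^\star)\simeq C^*(A)\otimes_{\Sym(\QQ(-1)[-2])}\one,
\]
that is, we kill the Chern class $c_1(L)\in H^2(A,\QQ(1))$ by freely adjoining a generator $\QQ(-1)[-1]$ whose differential is $c$. This pushout description should follow from the Gysin triangle together with the projective bundle formula.

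Augmentations of this pushout are then an augmentation $x$ of $C^*(A)$ together with a nullhomotopy of $x^*(c)$ in
\[
\Hom\big(\QQ(-1)[-2],R\big)=\Ext^2(\QQ(-1),\QQ)\otimes R=K_2\text{-type group}.
\]
On the number ring this group vanishes rationally (Borel: $K_2(\Oo_K)_\QQ=0$), so the obstruction disappears and every $x$ lifts. The choices of nullhomotopy up to equivalence form a torsor under
\[
\Ext^1(\QQ(-1),\QQ)\otimes R=\Oo_Z^\times\otimes R,
\]
which is $\VV^\lor\Gm(Z)_\QQ$. The remaining ambiguity, coming from $\pi_1$ of the base augmentation space, is $\Ext^0\big(h^1(A),\one\big)=0$ by weights. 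Hence $\pi_0$ fibres correctly, and the torsor is an honest principal bundle.

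\emph{Step 3: triviality.} To show the torsor is trivial, produce a section. I would first try to use the $\Gm$-equivariance, or a rigidification $L$ trivialized at the zero section, to split the extension in families. Failing that, I would observe that both base and fibre are vector groups and that torsors under a vector group over an affine space are trivial, since $H^1$ of quasi-coherent sheaves on an affine vanishes.

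I expect the main obstacle to be Step 2: establishing the pushout presentation of $C^*(L^\star)$ at the level of $E_\infty$ algebras, rather than merely of underlying objects, and checking that $\pi_0$ of the fibre sequence of mapping spaces behaves as claimed. This needs exactness and surjectivity on $\pi_0$, which is where the vanishing of $\Ext^2(\QQ(-1),\QQ)$ and of $\Ext^0(h^1,\one)$ are used.
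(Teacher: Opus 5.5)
Your argument follows the paper's proof of Theorem~\ref{TC12}, specialized to $G=A$, but uses a different presentation of $C^*(L^\star)$. The step you flag as the main obstacle is exactly what the paper imports from \cite[Theorem 7.2]{DCBetts}. That theorem gives a pushout $C^*(L^\star)\simeq C^*(A)\otimes_{\Sym C^*(A)}\Sym\ep(L^\star)$, where $\ep(L^\star)=\operatorname{cofib}\big(c_1(L)\colon \QQ(-1)[-2]\to C^*(A)\big)$. Since $\Sym$ preserves colimits, $\Sym\ep(L^\star)\simeq \Sym C^*(A)\otimes_{\Sym \QQ(-1)[-2]}\one$. Pasting pushouts then turns the paper's presentation into your Hirsch extension $C^*(A)\otimes_{\Sym\QQ(-1)[-2]}\one$. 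So cite that theorem at this step. Your remark that it ``should follow from the Gysin triangle'' is not an argument as it stands, and Remark~\ref{TC13} notes the proof is somewhat involved. Conversely, that theorem has nothing to do with the identification $\pi_0\Hom_{\Dd_M}(H^1[-1],\QQ(0))\simeq A(Z)_\QQ$, which is where you cited it.

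You also misidentify the obstruction group. The group $\pi_0\Hom_{\Dd_M}\big(\QQ(0),\QQ(1)[2]\big)$ is motivic $H^2(Z,\QQ(1))\simeq K_0^{(1)}(Z)\simeq\operatorname{Pic}(Z)_\QQ$. It vanishes because $\operatorname{Pic}(Z)$ is a quotient of the finite class group, and this is precisely hypothesis \ref{TC12}(*). The vanishing of $K_2(\Oo_K)_\QQ$ concerns $\Hom(\QQ(0),\QQ(2)[2])$ and plays no role here, even though your conclusion is right.

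Step~3 is superfluous. The obstruction vanishes for every $R$, so take $R$ to be the coordinate ring of $\VV^\lor A(Z)_\QQ$ and lift the universal point. By Yoneda this gives a section, exactly as in Proposition~\ref{MM55}, and together with the free transitive action it trivializes the torsor. No rigidification or $\Gm$-equivariance is needed. The quasi-coherent $H^1$ argument is also beside the point: $\pi_0\Aaug$ is only a presheaf, and showing it is a locally trivial sheaf torsor would require the same lifting statement.

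Your check that $\pi_1\big(\Aaug(A)(R)\big)\simeq\Hom_{\Dd_M}(H^1,\QQ(0))\otimes R=0$ is a genuine contribution. It shows the $\Oo(Z)^*_\QQ\otimes R$-action on the fibers of $\pi_0$ is free and not merely transitive, which makes explicit a point the paper's terse proof passes over.
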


\noindent
Theorem \ref{IN11} is based on a determination of the structure of $C^*(L^\star)$. The latter instantiates the expectation that highly structured algebras in motives with rational coefficients should form a reasonably computable flavor of unstable motivic homotopy, and this may be viewed as a third (if somewhat less precise) purpose for the present work, namely, to promote motivic rational homotopy theory. Despite its comparative simplicity, motivic rational homotopy theory has particularly direct (and possibly far reaching) implications for arithmetic. 

See Section \ref{sSec:GmBun} for the full statement and proof of the theorem.

But just as one has to cut down the unipotent fundamental group in Chabauty-Kim theory (e.g. by passing to quotients along the descending central series), here too, we usually expect to have to filter $C^*(X)$. I learned about one way to do so from the work of I. Iwanari \cite{Iwanari}, who lifts the Sullivan filtration of classical rational homotopy theory to the motivic setting. I find his explicit computations of the first few steps $\fkS_iC^*(X)$ in the filtration in a range of examples tremendously charming, and yet another purpose of the present work is simply to celebrate Iwanari's computations by exploring their direct implications for augmentation spaces.

Let $\Aaug^i(X) = \Aaug \big(\fkS_i C^*(X) \big)$. We obtain two criteria for the representability of the truncated augmentation spaces
\[
\Aug^i(X) := \pi_0 \Aaug^i(X), 
\]
a \textit{coarse} criterion, and a \textit{refined} criterion. By construction, for each $i \ge 0$, there's a fiber sequence
\[
\Aaug^{i+1}(X) \to \Aaug^i(X) \to \Vv^i
\]
where $\Vv^i$ is the augmentation space 
\[
\Vv^i = \Aaug(\Sym V_i)
\]
of the free commutative algebra on an object $V_i \in \Dd_M$. Our coarse criterion has the advantage of being formulated entirely in terms of the ``linear'' spaces $\Vv^i$. 

\begin{ssProposition}
\label{IN12}
In addition to certain finiteness conditions on the $V_i$, assume
\[
\pi_2 \Vv^i(\QQ) = 0
\quad  \text{for } 0 \le i \le n-2.
\] 
Then $\Aug^n(X)$ is representable by a finite type affine $\QQ$-scheme with dimension bound given by a sum of dimensions of vector spaces coming from K-theory:
\[
\dim \Aug^n(X) 
\le 
\sum_{i=0}^{n-1} 
\dim \pi_0\Hom_{\Dd_M}\big( V_i[1], \QQ(0) \big).
\]
\end{ssProposition}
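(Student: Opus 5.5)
We argue by induction on $n$ using the fiber sequences $\Aaug^{i+1}(X) \to \Aaug^i(X) \to \Vv^i$. First we identify the linear spaces. Since $\Sym V_i$ is free, for any $\QQ$-algebra $R$
\[
\Vv^i(R) = \Hom_{\Dd_M}\big(V_i, R\otimes\one\big),
\qquad
\pi_k \Vv^i(R) \cong \pi_0\Hom_{\Dd_M}\big(V_i[k], \QQ(0)\big)\otimes_\QQ R .
\]
The tensor identification should follow from the finiteness conditions on $V_i$, for instance compactness of $V_i$ together with finite dimensionality of the relevant motivic cohomology groups. Thus $\pi_0\Vv^i$ is represented by a vector group $\VV_i^0$, and $\pi_1\Vv^i$ by a vector group $\VV_i^1$ of dimension $\dim \pi_0\Hom(V_i[1],\QQ(0))$. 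The hypothesis $\pi_2\Vv^i(\QQ)=0$ then forces $\pi_2\Vv^i(R)=0$ for all $R$.

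The fiber sequence gives a long exact sequence of homotopy presheaves. Taking $\Aaug^i(X)$ pointed at the augmentation through which we are working, this is
\[
\pi_1\Aaug^i(X) \to \pi_1\Vv^i \to \pi_0\Aaug^{i+1}(X) \to \pi_0\Aaug^i(X) \to \pi_0\Vv^i .
\]
The last map factors the boundary obstruction. Since $\Aaug^{i+1}(X)$ is the fiber over the zero point, the image of $\Aug^{i+1}(X)\to\Aug^i(X)$ is the preimage of $0$ under a morphism $\Aug^i(X)\to\VV_i^0$. This preimage is a closed subscheme of $\Aug^i(X)$, given as a fiber product of affine finite-type schemes. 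The fibers of $\Aug^{i+1}(X)\to\Aug^i(X)$ are orbits of the group $\VV_i^1$, acting through $\pi_1\Vv^i\to\pi_0\Aaug^{i+1}$, modulo the image of $\pi_1\Aaug^i(X)$.

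The main obstacle is controlling this quotient so that $\Aug^{i+1}(X)$ is a representable affine scheme rather than a stack or a nonseparated quotient. Here we use the vanishing of $\pi_2$. Inductively, it lets us show that $\pi_1\Aaug^{i}(X)$ is itself a vector-group-like presheaf over $\Aug^i(X)$: it is the image of $\pi_2\Vv^{i-1}=0$ in an exact sequence $0\to\pi_1\Aaug^i\to\pi_1\Aaug^{i-1}\to\pi_1\Vv^{i-1}$, and it is built from the $\pi_1\Vv^j$. The hypothesis for $i\le n-2$ is exactly what is needed to reach $\Aug^n$.

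Once that is in place, the image of $\pi_1\Aaug^i(X)\to\pi_1\Vv^i$ is a family of linear subspaces $W\subset \VV_i^1\times \Aug^i(X)$ over $\Aug^i(X)$. The space $\Aug^{i+1}(X)$ is then a torsor, over the closed subscheme cut out above, under the quotient $\VV_i^1/W$. We trivialize it, or use that torsors under unipotent vector groups over affine bases are trivial, so affineness and finite type propagate. Where the image $W$ varies in rank, we would first try to show that it is locally constant, using the group structures coming from the $E_\infty$-structure. Failing that, we would stratify and argue that the quotient is still represented by a closed subscheme of a vector bundle.

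The dimension bound follows by induction: each step adds at most $\dim \VV_i^1 = \dim\pi_0\Hom_{\Dd_M}(V_i[1],\QQ(0))$, starting from $\Aug^0(X)$ being a point.
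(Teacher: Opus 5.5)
Your outline matches the paper's: representability of $\pi_k\Vv^i$ via compactness and finite dimensionality, the long exact sequence, the closed fiber $L_i=\pi_0\Aa^i\times_{\pi_0\Vv^i}\{0\}$, and a torsor over it. But there is a genuine gap at exactly the step you call the main obstacle. You never show that the image $W$ of $\pi_1\Aa^i\to\pi_1\Vv^i$ is harmless, and your fallback does not work. If $W$ were a family of subspaces of jumping rank over $L_i$, the presheaf cokernel $(L_i\times\VV^1_i)/W$ is in general not representable. Already the cokernel of multiplication by $t$ on $\AA^1\times\GG_a$ over $\AA^1$ fails to be representable. Neither ``locally constant rank'' nor a stratification gives a representing scheme. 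This is precisely the situation the paper's refined criterion (Proposition \ref{BB44}, Corollary \ref{BB45}) handles, and there representability of the cokernel has to be imposed as an extra hypothesis.

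The missing idea is that $\pi_1\Aa^i$ is not merely ``vector-group-like'' but \emph{zero}. The space $\Aa^0(R)=\Hom_{\Cc}(\one,\one\otimes R)$ is contractible, so $\pi_1\Aa^0=0$. Your own exact sequence $\pi_2\Vv^{i-1}\to\pi_1\Aa^i\to\pi_1\Aa^{i-1}$, with $\pi_2\Vv^{i-1}=0$, then gives inductively $\pi_1(\Aa^i(R),y)=0$ for all $i\le n-1$, all $R$, and all base points $y$. This holds at every point of $\Aa^i(R)$, not only at your fixed augmentation, because every $y\in\Aa^i(R)$ maps into the zero component of $\Vv^{i-1}(R)$, so the sequence applies at $y$. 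This is exactly what the $\pi_2$ hypothesis for $i\le n-2$ buys; it is statement (2) of Proposition \ref{BB37}.

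Once every component of $\Aa^i(R)$ is simply connected, the stabilizers vanish, so $W=0$. Then $\pi_1\Vv^i(R)$ acts freely and transitively on the fibers of $\pi_0\Aa^{i+1}(R)\to L_i(R)$ (paragraph \ref{BB38}). The long exact sequence makes this map surjective for every $R$. Taking $R=\Oo(L_i)$ and lifting the universal point gives a section, so $\pi_0\Aa^{i+1}\cong L_i\times\VV^1_i$. You should argue triviality this way rather than by appealing to triviality of $\GG_a$-torsors over affine bases, since a priori you only have a presheaf pseudotorsor, not a sheaf torsor. Affineness, finite type and the dimension bound then follow as you say.
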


\noindent
See Proposition \ref{BB37} for the precise statement. 

Unfortunately, while the coarse criterion is enough to work out a few first examples (including e.g. $\Aug^3$ of an elliptic scheme as above), the vanishing of $\pi_2$ fails in other simple examples. This is the case, for instance, for $\Aug^2$ of $\PP^2$ minus four lines in general position (nickname ``four line complement''). We're nevertheless able to establish the representability in this example over an open integer scheme $Z \subset \Spec \Oo_K$ (Section \ref{sSec:4L}). If the fiber $L^i$ of 
\[
\pi_0\Aaug^i(X) \to \pi_0 \Vv^i
\]
is representable (which happens e.g. if the source and target are representable), then $\pi_0 \Aaug^{i+1}(X) \to L^i$ is a trivial torsor under a presheaf of abelian groups $Q^i \to L^i$ over $L^i$. Roughly speaking, $Q^i$ is the presheaf cokernel of the map of presheaves of fundamental groups 
\[
\text{`` }\pi_1\Aaug^i(X) \to \pi_1 \Vv^i\text{ ''}
\]
at a family of base points lifting the identity map of $L^i$; see Proposition \ref{BB44} for the precise construction. Thus, our refined criterion states that under certain finiteness conditions on the $V_i$, if the presheaves of groups $Q^0, Q^1, ..., Q^i$ are representable, then so is $\pi_0\Aaug^{i+1}(X)$; see Corollary \ref{BB45}. 

The homotopy groups $\pi_j \Aaug^i(X)$ have a tendency to be unipotent; see e.g. Proposition \ref{KK61}. Hence, so do the groups $Q^i$. This provides some justification for our choice to work with presheaves in place of sheaves (for a suitable topology): our view is that if the presheaves $\pi_0 \Aaug^i(X)$ become representable after sheafification, then they would be representable (hence in particular sheaves) in the first place. Proposition \ref{KK61} and Remark \ref{VF44} also provide an inkling of how one might go about verifying our refined criterion in a concrete example. The homotopy groups $\pi_j\Aaug^i(X)$ $, \pi_j \Vv^i$ (at a suitable choice of compatible base-points) form an exact couple, equivalently, fall into a grid of knight's moves (see paragraph \ref{BB35}). One may work one's way up through the rows of homotopy groups, and across each row, with each successive representability result providing a useful step towards the representability of the next. The four line complement completes this process in one step: we find that $Q^1 = 0$, hence that
\[
\pi_0\Aaug^2(X) \to \pi_0\Aaug^1(X)
\]
is an isomorphism. Since the target is representable by a vector group, so is the source. 

Suppose $Z \subset \Spec \Oo_K$ is an open integer scheme and $X \to Z$ is the complement of an \'etale divisor in a smooth proper $Z$-scheme equipped with a $Z$-integral base point $x$. Fix a closed point $\pP \in Z$ lying over the prime $p \in \ZZ$ and let $Z^o \subset Z$ be the complement of the set of all primes over $p$. The $\Qp$-\'etale realization $\pi_1^{[n]}(X,x)_\Qp^\et$ of the quotient of the unipotent fundamental group by the $n$th step in the descending central series comes equipped with an action of the Galois group $\pi^\et_1(Z^o)$. The pointed set
\[
H^1_f \big( 
\pi_1^\et(Z^o), 
\pi_1^{[n]}(X,x)_\Qp^\et
\big)
\]
of nonabelian cohomology classes which are crystalline at primes above $p$ may be upgraded in a natural way to a presheaf of pointed sets, known to be representable by a finite type affine $\Qp$-variety under mild assumptions \cite{kimii} (we replace the ordinary `$H$' by a fold-face `$\mathbf{H}$'). 

Let $K_\pP$ be the local field at $\pP$, let $\Oo_\pP$ be its ring of integers, and let $Z_\pP = \Spec \Oo_\pP$ be the associated trait. The $p$-adic de Rham realization $\pi_1^{[n]}(X,x)^\dR_{K_\pP}$ has the structure of a prounipotent group object in the $\Qp$-Tannakian category $\MFphi = \MFphi(K_\pP)$ of admissible filtered $\phi$ modules over $K_\pP$. The pointed set
\[
H^1 \big( \MFphi, \pi_1^{[n]}(X,x)^\dR_{K_\pP} \big)
\]
of $\pi_1^{[n]}(X,x)^\dR_{K_\pP}$-torsors in $\MFphi$ may be upgraded in a natural way to a presheaf of pointed sets (see Definition \ref{DA40}), and under mild assumptions, it too is representable by a finite type affine $\Qp$-variety \cite{kimii}.

Chabauty-Kim theory \cite{kimi, kimii, nats} revolves around a commuting diagram known as ``Kim's cutter''
\[
\xymatrix{
X(Z) \ar[r] \ar[d] & X(Z_\pP) \ar[d] 
\\
\mathbf{H}^1_f \big( 
\pi_1^\et(Z^o), 
\pi_1^{[n]}(X,x)_\Qp^\et
\big) \ar[r]
&
\mathbf{H}^1 \big( \MFphi, \pi_1^{[n]}(X,x)^\dR_{K_\pP} \big)
}
\]
in which the bottom map is a map of presheaves. We show that our construction is compatible with Chabauty-Kim theory in the following sense. There's an evident version of our augmentation presheaves in any realization. In particular, there's a filtered $\phi$ version $\Aaug_{\Fphi}^i(X)$.\footnote{In order to have a realization functor into filtered $\phi$ modules we restrict to lisse motives; see paragraph \ref{EA23}.} 

\begin{ssTheorem}
\label{IN13}
There's a commuting diagram
\[
\xymatrix{
X(Z) \ar[r]^-{\la^g} \ar[d] & X(Z_\pP) \ar[d]^{\de_\Fphi}
\\
\pi_0\Aaug^n(X)_\Qp \ar[d] \ar[r]^-{\la^a}
&
\pi_0\Aaug^n_{\Fphi}(X) \ar[d]^-{\ka_\Fphi}
\\
\mathbf{H}^1_f \big(\pi_1^\et(Z^o), \pi_1^{[n]}(X,x)^\et_\Qp \big) \ar[r]^-{\la^t}
&
\mathbf{H}^1 \big( \MFphi, \pi_1^{[n]}(X,x)^\dR_{K_\pP} \big)
}
\]
factoring Kim's cutter. The lower square consists of maps of presheaves. 
\end{ssTheorem}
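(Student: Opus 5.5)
Construct the six maps separately, then verify commutativity square by square; the top square is formal and the work lies in the lower one.

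\emph{The maps.} The maps $X(Z) \to \pi_0\Aaug^n(X)(\QQ)$ and $X(Z_\pP) \to \pi_0\Aaug^n_\Fphi(X)(\Qp)$ send a section $s$ to the augmentation $s^*\colon C^*(X) \to C^*(Z) = \one$, composed with $\fkS_n C^*(X) \to C^*(X)$. In the filtered $\phi$ case we use the lisse realization of paragraph \ref{EA23} and the crystalline cohomology of $X_{Z_\pP}$. The map $\la^a$ comes from the realization functor $\Dd_M \to \Dd_\Fphi$. This functor is symmetric monoidal and colimit preserving, so it sends $\fkS_n C^*(X)$ to $\fkS_n C^*_\Fphi(X)$; here one must check that Iwanari's filtration commutes with realization, which holds because it is built from $\Sym$, cofibers and fiber products. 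It also sends $R\otimes\one$ to $R\otimes\one_\Fphi$. We obtain a map of presheaves on $\Aff_\Qp$ after base change $R \mapsto R$ along $\QQ\to\Qp$, where $\pi_0\Aaug^n(X)_\Qp$ denotes the restriction of the presheaf to $\Qp$-algebras. The upper square then commutes by functoriality of realization applied to $s^*$, together with the comparison $C^*_\Fphi(X_{K_\pP}) \simeq$ realization of $C^*(X)$.

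\emph{The vertical maps $\ka$.} Here I use the Sullivan/bar-construction dictionary. An $R$-point $a\colon \fkS_n C^*(X) \to R\otimes\one$ yields the two-sided bar construction $B(a, \fkS_nC^*(X), x^*)$, which is a coalgebra-torsor over the Hopf algebra $B(x^*,\fkS_n C^*(X),x^*)$. By Iwanari's results the $H^0$ of the latter realizes $\Oo(\pi_1^{[n]}(X,x))$; for $\fkS_n$, truncation of the Sullivan filtration matches the descending central series quotient in degree $0$. This is the step I expect to be the main obstacle: one has to match the filtration index $n$ with the central series step and show that $H^0$ of the bar construction at level $n$ is exactly $\Oo(\pi_1^{[n]})$, with no higher-degree contributions disturbing the torsor structure. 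I would first try to reduce to the realization categories, where a t-structure exists, and invoke Iwanari's comparison there.

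Given this, define the two lower vertical maps. The filtered $\phi$ map $\ka_\Fphi$ realizes in $\MFphi$: $H^0$ of the bar construction is an $R$-family of $\pi_1^{[n],\dR}$-torsors in $\MFphi$, giving a point of $\mathbf{H}^1(\MFphi,-)$ as in Definition \ref{DA40}. The étale map realizes to $\Qp$-étale over $Z^o$, where the motive is lisse, giving a $\pi_1^\et(Z^o)$-equivariant torsor. To see that it lies in $\mathbf{H}^1_f$, note that its $p$-adic comparison is the filtered $\phi$ object, which is crystalline by construction. Naturality in $R$ is clear, so these are maps of presheaves. $\la^t$ is Kim's map (Fontaine's $D_\m{cris}$), and the lower square commutes by the compatibility of $D_\m{cris}$ with the bar construction, i.e.\ the $p$-adic comparison isomorphism for $C^*$ of smooth schemes with good reduction, which is symmetric monoidal.

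Finally, composing each column with the upper square gives Kim's unipotent Kummer maps. For a point $s$, the torsor $H^0 B(s^*,C^*,x^*)$ is the path torsor $\pi_1(X;x,s)$, by the same comparison applied to two base points. Hence the total diagram factors Kim's cutter.
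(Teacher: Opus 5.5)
\textbf{Verdict: genuine gap.} Your overall architecture matches the paper's:
\begin{itemize}
\item realize the motivic cochain algebra;
\item attach to an augmentation its Koszul-dual (bar) cotorsor;
\item take $H^0$ to land in the Tannakian heart;
\item use Déglise--Nizioł for the lower square.
\end{itemize}
The gap is exactly the step you flag and then attribute to ``Iwanari's results'': the passage from the Iwanari truncation $\fkS_nC$ to the lower central series quotient $\pi^{[n]}$. This is the entire content of Proposition~\ref{DX66}, and the paper does not obtain it from Iwanari.

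It is also not formal from the construction. The Iwanari tower kills the \emph{whole} fiber $V_n$ at each stage, so $\fkS_{n+1}C$ acquires generators in all degrees. For example, the summand $\QQ(-2)[-3]$ of $V_2$ for the punctured elliptic curve produces a degree-$2$ generator in $\fkS_3C$. Such generators can create new classes in $H^2(\fkS_{n+1}C)$. The next stage then kills these classes with non-minimal degree-$1$ generators; this already happens for the four line complement, where $\fkS_1C=\Sym(H\oplus I)$ has extra degree-$2$ generators. So the claim that $H^0$ of the bar construction of $\fkS_nC$ is $\Oo(\pi^{[n]})$ needs a real argument, and you do not supply one.

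\textbf{The missing idea: the bridge the paper builds.} Realize into $\Dd_\et(Z)$, where a t-structure exists. There, form the coherent $1$-minimal tower $\fkM_nC$, which at each step uses only $\tau^{\le 2}V^{\fkM}_n$.
\begin{itemize}
\item The maps $\tau^{\le 2}V^{\fkM}_n\to V^{\fkM}_n\to V_n$ induce a morphism of towers $\fkM_nC\to\fkS_nC$.
\item Hence one gets $G_{Z^o}$-equivariant homomorphisms $\pi(\fkS_nC,x)^\et\to\pi(\fkM_nC,x)^\et$.
\item The classical Sullivan--Morgan description of the $1$-minimal filtration identifies the target with $\pi(C,x)^{\et,[n]}$.
\item One then pushes torsors forward along this homomorphism.
\end{itemize}
No isomorphism $\pi(\fkS_nC,x)\cong\pi^{[n]}$ is needed, so your worry about higher-degree contributions never arises.

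If you want to keep your stronger claim, you could argue as follows. Show that $H^1(\fkS_nC)\to H^1(C)$ is an isomorphism for $n\ge 1$. Then show that each Hirsch step, being by an object concentrated in degrees $\ge 2$, induces a central extension on $\pi$. Together these give that $\pi^{[n]}\to\pi(\fkS_nC,x)$ is surjective. Injectivity, however, still uses the map to $\pi(\fkM_nC,x)$, so you need the bridge anyway.

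\textbf{A separate point.} Identifying the bar-construction torsors with Kim's path torsors is a distinct comparison theorem. The paper imports it from the literature, including Tur Dorvault for tangential base points. It is not ``the same comparison'' as the one for the filtration.
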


\begin{proof}
Combine Proposition \ref{DX66} with Variant \ref{DA39}, plus Tur Dorvault \cite{tur2026motivic} for the case of tangential base points. For the comparison between the $\Qp$-\'etale unipotent fundamental group as defined here and the version used by Kim, see \cite{rmps, dan2017rationalCorr}; combining the latter with \cite{DegliseNiziol} produces a similar comparison of $p$-adic de Rham unipotent fundamental groups.
\end{proof}

Under mild assumptions, the composition $\ka_\Fphi \circ \de_\Fphi$ is known to be locally analytic, given in coordinates by $p$-adic iterated integrals. The method of Chabauty-Kim proceeds as follows. If one can establish the existence of a function
\[
f: \mathbf{H}^1 \big( \MFphi, \pi_1^{[n]}(X,x)^\dR_{K_\pP} \big)
\to 
\AA^1_\Qp
\]
such that
\[
f \circ \la^t = 0,
\]
then we obtain a locally analytic function $f\circ \ka_\Fphi \circ \de_\Fphi$ on $X(Z_\pP)$ that vanishes on $X(Z)$. The latter may thus be used to bound $X(Z)$ inside $X(Z_\pP)$. An optimistic outlook for this project would be to use the upper square in Theorem \ref{IN13} to produce new locally analytic functions vanishing on $X(Z)$. In the case of the four line complement, this could lead to a ``Kim-theoretic'' proof of the degeneracy of the integral points, currently known to follow from the subspace theorem \cite[Theorem 1.2.4]{corvaja2016integral}. Just beyond this example lies the ``$\ominus$-complement'' $\Spec \ZZ[x,y, y\inv, (x^2+y^2-1)\inv]$, whose sets of integral points are conjectured \cite[\S1.2, \textit{Vojta's Conjecture}]{corvaja2016integral} but apparently not known to be degenerate. The theory of $p$-adic iterated integrals of Coleman and Besser \cite{Besser, besser2011heidelberg} may be viewed as a $p$-adic avatar of integration on $n$-simplices on the $n$-fold product $X(\CC)^n$. Here one might expect a notion of integration on higher simplices. 

Apart from the work by Iwanari \cite{Iwanari}, this work is also closely related to work of Pridham \cite{pridham2020nonabelian}. Pridham works primarily in the $p$-adic \'etale setting. He too studies filtrations on homotopy types with associated exact couples of homotopy groups. His main focus is on obstruction spaces analogous to our $\Vv^i$, and on a comparison of \textit{local} and \textit{global} which leads in the direction of Kim's nonabelian reciprocity laws \cite{kim2015diophantine}. 

\bigskip

\noindent
\textbf{Acknowledgements.}
I wish to thank Pietro Corvaja, Isamu Iwanari, Shane Kelly, Minhyong Kim, Jon Pridham, and Amos Turchet for helpful conversations and email exchanges. I wish to thank Sa'ar Zehavi for his energizing enthusiasm concerning the topic of this article. I would particularly like to thank Oliver R\"ondigs for his interest and encouragement, and for his great hospitality during a brief visit to Osnabr\"uck.

\medskip
\subsection{Notation and Terminology}
\label{notation}
Our main references for \oo-categories and higher algebra are Lurie's works \cite{LurieTopos, LurieHA}, whose notation and terminology we follow, with the usual exception that we allow ourselves to identify an ordinary category with the associated \oo-category, and two less common exceptions: we use $\HoT$ for the \oo-category denoted $\Ss$ in loc. cit. and we refer to its objects as \textit{homotopy types}, and we use $\SHoT$ for the stable \oo-category denoted $\Ss p$ in loc. cit. whose objects we refer to variously as \textit{spectra} or \textit{stable homotopy types}.

Our use of (\oo)-category theory is quite tame by modern standards. Thus, we elect to avoid fixing a universe and distinguishing between \textit{large} and \textit{small}. Our view is that correcting the resulting errors caused by this oversight would amount to a routine exercise in adding set theoretic decorations and caveats, while carrying out this exercise within the body of our text would somewhat cloud the exposition. 

Given objects $X$, $Y$ in an \oo-category $\Cc$, we denote the homotopy type of morphisms $X \to Y$ by $\Hom_\Cc(X,Y) \in \HoT$. An object $X$ of $\HoT$ is a simplicial set, and by applying geometric realization, we may think of $X$ as a topological space. When doing so, it's important to bear in mind that many \oo-categorical constructions produce objects that are only well defined up to essentially unique equivalence. 

By an \emph{open integer scheme} we mean an open subscheme of $\Spec \Oo_K$ for $K$ a number field. Given a base scheme $Z$, we say $X \to Z$ is \emph{smoothly open} if it admits a smooth proper compactification with relative strict normal-crossings boundary having smooth proper strata. Although some results apply more broadly, throughout the article, $Z$ may be taken to be an open integer scheme and $X$ a smoothly open $Z$-scheme. 

To distinguish Eilenberg MacLane spaces from Quillen K-groups, we denote the former by e.g. `$\Kk(\pi,i)$'. More generally, if $\pi: \Ee\op \to \mathbf{Grp}$ is a presheaf of groups, we signal by $\Kk(\pi,i)$ a presheaf of homotopy types with values
\[
\Kk(\pi,i)(T) = \Kk\big(\pi(T),i\big).
\]

Given a module $E$ over a ring $R$, we define the \emph{covariant total space} by 
\[
\VV^\lor E  = \Spec \Sym E^\lor.
\]

\section{Criteria for representability in an abstract setting}
\label{Sec:Rep}

\subsection{Augmentation presheaves and the Iwanari tower}
\label{sSec:AugPre}

We begin by defining the sort of functors whose representability we are interested in.

\sspar{Definition of augmentation spaces}
\label{BB33}
Let $\Dd \in \CAlg (\PrLst)_{\Dd(F)/}$ be a presentably symmetric monoidal stable $F$-linear \oo-category over a field $F$ of characteristic 0 and let $\Cc = \CAlg(\Dd)$. (In our applications, $F$ will be either $\QQ$ or $\Qp$.) Let $\Dd(F)$ be the derived \oo-category of $F$-\textit{vector spaces}. Let $\Cc(F) = \CAlg \Dd(F)$. We write $F(0)$ for the tensor units of $\Dd$, $\Dd(F)$ and we write $\one$ for the (essentially unique) trivial commutative algebra with underlying object $F(0)$. If $C \in \Cc$ is a commutative algebra, then there's an evident associated functor 
\[
\widetilde{\Aaug}(C): 
\Cc(F) \to \HoT
\]
such that
\[
\widetilde{\Aaug}(C)(R) = 
\Hom_\Cc(C, R \otimes \one). 
\]
Throughout this article we will focus on the restriction  $\Aaug(C)$ of $\widetilde {\Aaug}(C)$ to the category of discrete $F$-algebras. We think of $\Aaug(C)$ as a presheaf of homotopy types on the category $\Aff_F$ of affine $F$-schemes. Our main object of interest in this article is the associated presheaf of sets $\pi_0 \Aaug(C)$. As a matter of notation, we set
\[
\Aug(C) := \pi_0 \Aaug(C);
\]
however, we will frequently write out `$ \pi_0 \Aaug(C)$' for emphasis. 

\begin{ssDefinition}
\label{BE44}
Given $A \in \Cc = \CAlg \Dd$, $V \in \Dd$, and $f: V \to A$ a morphism in $\Dd$, we define \emph{the coherent Hirsch extension $A'$ of $A$ by $V$} to be the pushout in $\Cc$ 
\[
\xymatrix{
A' & \one \ar[l]
\\
A \ar[u] & \Sym V \ar[l]^{\bar f} \ar[u]_-\ep
}
\]
of the induced map $\bar f$ along the canonical augmentation $\ep$. 
\end{ssDefinition}

\sspar{Definition of Iwanari tower}
\label{BB34}
Fix a commutative algebra $C \in \Cc$ as above. Following Iwanari \cite{Iwanari}, we define the \emph{coherent Sullivan tower} or \emph{Iwanari tower} of $\Cc$ below left 
\[
\xymatrix
@ R = 4ex
{
C 
&& V_n \ar[r] & \fkS_n C \ar[r] & C \ar[r] & V_n[1]
\\
\vdots \ar[u] && \fkS_{n+1}C  & \one \ar[l] 
\\ 
\fkS_2 C \ar[u] & \Sym V_2 \ar[l] & \fkS_n C \ar[u] & \Sym V_n \ar[l] \ar[u].
\\
\fkS_1 C \ar[u] & \Sym V_1 \ar[l] 
\\
\one \ar[u] & \Sym V_0 \ar[l]
}
\]
via the exact triangles in $\Dd$ and the coherent Hirsch extensions in $\Cc$ to the right. We set 
\[
\Aaug^n(C) := \Aaug(\fkS_n C).
\]

\sspar{Definition of dual tower and associated exact couple}
\label{BB35}
Abbreviating $\Aa^nC:= \Aaug^n(C)$ and
\[
\Vv^n = \Aaug(\Sym V_n) 
\]
so that
\[
\Vv^n(R) = \Hom_\Dd \big( V_n, R(0) \big),
\]
we obtain a tower of presheaves of homotopy types (below left)
\[
\xymatrix{
\Aa \ar[d]
\\
\vdots \ar[d] &
&&
\vdots \ar[d]
\\
\Aa^2 \ar[d] \ar[r] & \Vv^2
&& 
\pi_* \Aa^2 \ar[d] \ar[r] & \pi_* \Vv^2 \ar[ul]|{(-1)}
\\
\Aa^1 \ar[d] \ar[r] & \Vv^1
&&
\pi_*\Aa^1 \ar[d] \ar[r] & \pi_* \Vv^1 \ar[ul]|{(-1)}
\\
\{\ast\} \ar[r] & \Vv^0.
&&
\pi_*(\ast) \ar[r] & \pi_*\Vv^0. \ar[ul]|{(-1)}
}
\]
For suitable choice of base-points, we obtain an exact couple of homotopy groups / sets (to the right). We find it psychologically helpful to unfold the latter into a grid of knight's moves:
\begin{footnotesize}
\[
\xymatrix
@ C = 3ex
{
\pi_3 \Aa^1 \ar@{~>}[r]^-{\al_3} &
\pi_3 \Vv^1 \ar@{~>}[r] &
\pi_2 \Aa^2 \ar[r]^-{\al_2} \ar@{~>}[d] &
\pi_2\Vv^2 \ar[r] & \pi_1 \Aa^3 \ar[r]^-{\al_1} \ar[d] & \pi_1 \Vv^3 \ar[r] & \pi_0 \Aa^4 \ar[r] \ar[d] & \pi_0 \Vv^4
\\
&&
\pi_2 \Aa^1 \ar@{~>}[r] 
&
\pi_2\Vv^1 \ar@{~>}[r] & \pi_1 \Aa^2 \ar[r] \ar@{~>}[d] & \pi_1 \Vv^2 \ar[r] & \pi_0 \Aa^3 \ar[r] \ar[d] & \pi_0 \Vv^3
\\
&&&& 
\pi_1 \Aa^1 \ar@{~>}[r] & \pi_1 \Vv^1 \ar@{~>}[r] & \pi_0 \Aa^2 \ar[r] \ar@{~>}[d] & \pi_0 \Vv^2
\\
&&&&&& 
\pi_0 \Aa^1 \ar@{~>}[r] \ar[d] & \pi_0 \Vv^1
\\
&&&&&& \ast &.
}
\]
\end{footnotesize}
\noindent The squiggly arrows highlight a long exact sequence. 

\begin{ssRemark}
\label{BB36}
The $1$st Iwanari truncation $\fkS_1C$ of a commutative algebra $C \in \Cc$ often has a simple structure. Suppose the underlying object $C \in \Dd$ decomposes as
\[
C \simeq F(0) \oplus \widetilde C
\]
so that the unit map $\one \to C$ corresponds to the coprojection. Then
\[
\fkS_1C \simeq \Sym \widetilde C.
\]
Indeed, the resulting exact triangle
\[
\widetilde C[-1] \to F(0) \to C \to \widetilde C
\]
shows that $V_0 \simeq \widetilde C[-1]$. Since the functor $\Sym$ commutes with colimits, we then find that 
\begin{align*}
\fkS_1C  &
\simeq \one \otimes_{\Sym \widetilde C[-1]} \one
\\
& \simeq \Sym \left( 
0 \amalg_{\widetilde C[-1]} 0
\right)
\\
& \simeq \Sym \widetilde C.
\end{align*}
\end{ssRemark}

\subsection{Coarse criterion}
\label{sSec:Coarse}

As mentioned in the introduction, our first criterion has the aesthetic advantage of being worded entirely in terms of the (``abelian'') objects $V_i \in \Dd$ and the associated rational infinite loop spaces $\Vv^i(F) = \Hom_\Dd \big(V_i, F(0) \big)$. It has the disadvantage that the vanishing of $\pi_2$ often fails in practice. 

\begin{ssProposition}
\label{BB37}
Let $C \in \Cc = \CAlg \Dd$ be a commutative algebra in a presentably symmetric monoidal stable $F$-linear \oo-category as in paragraph \ref{BB33}. Consider the presheaves of homotopy types $\Aa^i = \Aaug^i(C)$, $\Vv^n = \Aaug(\Sym V_n)$ associated to the Iwanari tower of $C$ (paragraphs \ref{BB34}, \ref{BB35}). Fix $n \ge 1$ and assume 
\begin{itemize}
\item[(a)]
$V_i$ is a sum of compact objects for all $0 \le i \le n-1$,
\item[(b)]
$\pi_2\Vv^i(F) = 0$ for all $0 \le i \le n-2$, and
\item[(c)]
$\pi_1\Vv^i(F)$, $\pi_0\Vv^i(F)$
are finite dimensional for all $0 \le i \le n-1$.
\end{itemize}
Then 
\begin{enumerate}
\item
$\pi_0\Aaug^n(C)$ is representable by a finite-type affine $F$-scheme,
\item
for every $F$-algebra $R$, every connected component of $\Aaug^{n-1}(C)(R)$ is simply connected, and
\item
we have
\[
\dim \pi_0\Aaug^n(C) 
\le 
\sum_{i=0}^{n-1} \dim_F \pi_1 \Vv^i(F),
\]
with equality if $\pi_0 \Vv^i(F) = 0$ for $i = 1, \dots, n-1$.
\end{enumerate}
\end{ssProposition}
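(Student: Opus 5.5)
We argue by induction on $n$ using the fiber sequences $\Aa^{i+1} \to \Aa^i \to \Vv^i$ of paragraph \ref{BB35}. These come from applying $\Hom_\Cc(-, R\otimes\one)$ to the pushout defining the coherent Hirsch extension $\fkS_{i+1}C = \fkS_iC \otimes_{\Sym V_i} \one$; the fiber is taken over the base point of $\Vv^i(R)$ given by $\ep$. The first step is to understand the linear spaces. Since $\Vv^i(R) = \Hom_\Dd(V_i, R(0))$ and $V_i$ is a sum of compact objects by (a), the $F$-linearity of $\Dd$ gives
\[
\pi_j \Vv^i(R) \simeq \pi_j\Vv^i(F) \otimes_F R .
\]
For a single compact summand this holds because $\Hom_\Dd(V, -\otimes F(0))$ commutes with filtered colimits and finite limits, and every $R$ is a filtered colimit of finite free $F$-modules. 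For a sum of compacts it follows from the product formula, using the finiteness in (c) to commute the product with $\otimes R$; I would check this carefully. So by (c) the presheaves $\pi_0\Vv^i$ and $\pi_1\Vv^i$ are represented by the vector groups $\VV^\lor(\pi_j\Vv^i(F))$. Since $\Vv^i(R)$ is an $F$-module spectrum (an infinite loop space), all its components are equivalent and the base point is irrelevant.

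Next we prove (2) by induction: every component of $\Aa^i(R)$ is simply connected for $i \le n-1$. For $i=0$ the space $\Aa^0 = \ast$. For the inductive step, the long exact sequence at any base point of $\Aa^{i+1}(R)$ reads
\[
\pi_2\Vv^i(R) \to \pi_1\Aa^{i+1}(R) \to \pi_1\Aa^i(R).
\]
The right term vanishes by induction, and the left term equals $\pi_2\Vv^i(F)\otimes R = 0$ by (b) for $i \le n-2$. Hence $\pi_1\Aa^{i+1}=0$ for $i+1 \le n-1$.

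For (1) and (3) we again induct. Because $\pi_1\Aa^{i}(R)$ is trivial for $i \le n-1$, the tail of the exact sequence
\[
\pi_1\Aa^i(R) \to \pi_1\Vv^i(R) \to \pi_0\Aa^{i+1}(R) \to \pi_0\Aa^i(R) \to \pi_0\Vv^i(R)
\]
tells us two things. First, the map $\pi_0\Aa^{i+1}(R) \to \pi_0\Aa^i(R)$ has image equal to the fiber $L^i(R)$ over $0$ of $\pi_0\Aa^i(R)\to\pi_0\Vv^i(R)$. Second, $\pi_1\Vv^i(R)$ acts freely and transitively on each fiber of $\pi_0\Aa^{i+1}(R) \to L^i(R)$. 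Freeness holds because the stabilizer is the image of the trivial group $\pi_1\Aa^i$; here we use that the fiber sequence is principal, coming from the loop-space action of $\Omega\Vv^i$.

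Assume inductively that $\pi_0\Aa^i$ is a finite type affine scheme. Then $L^i$ is a closed subscheme, namely the preimage of $0$ under a map to a vector group. The main obstacle is to show that $\pi_0\Aa^{i+1} \to L^i$ is a \emph{trivial} torsor over presheaves, so that $\pi_0\Aa^{i+1}\simeq L^i\times \VV^\lor(\pi_1\Vv^i(F))$. This requires a section at the level of presheaves, naturally in $R$. I would get it from the universal point: take the identity of $L^i$, realized as an $\Oo(L^i)$-point $u$ of $\Aa^i$. Lift $u$ to an $\Oo(L^i)$-point of $\Aa^{i+1}$, which is possible because its image in $\pi_0\Vv^i(\Oo(L^i))$ vanishes. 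Pulling this lift back along $R$-points gives a natural section.

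The torsor is then trivialized via the action. To make the action natural in $R$ and independent of choices, we use that the relevant $\pi_1$ groups are abelian and that the components of $\Aa^i$ are simply connected: the latter makes the identification of fibers with $\pi_1\Vv^i(R)$ canonical, with no monodromy. This gives (1).

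For (3), $\dim \pi_0\Aa^{i+1} = \dim L^i + \dim_F\pi_1\Vv^i(F)$ and $\dim L^i \le \dim\pi_0\Aa^i$. Summing from $\dim\pi_0\Aa^0 = 0$ gives the bound. If $\pi_0\Vv^i(F)=0$ for $1\le i\le n-1$, then $L^i=\pi_0\Aa^i$ for these $i$. Also $L^0=\ast$, since $\Aa^0=\ast$. So each inequality is an equality, which gives the stated equality.
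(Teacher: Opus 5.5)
Your proposal is correct and follows the paper's proof essentially step for step. The steps match: the base change $\pi_j\Vv^i(R)\simeq\pi_j\Vv^i(F)\otimes_F R$ (Lemmas \ref{D03}, \ref{rep4}), simple connectivity of the components of $\Aa^i$ from the long exact sequence and (b), and the free transitive action of $\pi_1\Vv^i(R)$ on the fibers of $\pi_0\Aa^{i+1}\to L^i$. Your universal-point section, which makes the torsor trivial so that $\pi_0\Aa^{i+1}\simeq L^i\times\VV^\lor\pi_1\Vv^i(F)$, spells out a step that paragraph \ref{D06} leaves implicit; the same argument appears in the proof of Proposition \ref{MM55}. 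One correction: $L^0=\ast$ does not follow from $\Aa^0=\ast$ alone. It also needs the point of $\Aa^0$ to land in the zero component of $\Vv^0$ (e.g.\ when the unit of $C$ splits as in Remark \ref{BB36}), which the paper likewise assumes tacitly in paragraph \ref{D05}.
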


The proof spans paragraphs \ref{BB38} -- \ref{D06}.

\sspar{}
\label{BB38}
Let $E \xto{g} B$ be a fibration of topological spaces, fix $b \in B$ and let $\Ff = g\inv(b)$. Assume every path component of $E$ is simply connected. Then $\pi_1(B,b)$ acts freely and transitively on the fibers of $\pi_0\Ff \to \pi_0E$.

\begin{ssLemma}
\label{D03}
Let $M$ and $N$ be objects of $\Dd$, $E$ an $F$-\textit{vector space} and $i \in \NN$ a natural number. Assume $M$ is a sum of compact objects and assume the $F$-\textit{vector space} $\pi_i \Hom_\Dd(M,N)$ is finite dimensional. Then
\[
\pi_i\Map_\Dd(M, N\otimes E)
 = (\pi_i\Map_\Dd(M,N)) \otimes_F E.
\]
\end{ssLemma}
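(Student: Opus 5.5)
The plan is to reduce the tensor product $N \otimes E$ to a direct sum and then commute $\pi_i$ past it in two steps: first using compactness of the summands of $M$, then using finite dimensionality to pass from a product to a tensor product. Throughout, $N \otimes E$ means the tensor with $E$ viewed as an object of $\Dd(F)$, acting on $\Dd$ through the $F$-linear structure.

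First choose a basis, so that $E \simeq \bigoplus_{j\in J} F$ in $\Dd(F)$. Tensoring commutes with colimits, hence $N \otimes E \simeq \bigoplus_{j \in J} N$.

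Next write $M \simeq \bigoplus_{k\in K} M_k$ with each $M_k$ compact. Then
\[
\Hom_\Dd(M, N\otimes E) \simeq \prod_{k} \Hom_\Dd\Big(M_k, \bigoplus_{j} N\Big) \simeq \prod_k \bigoplus_j \Hom_\Dd(M_k, N).
\]
The second equivalence uses compactness of $M_k$ in the stable setting: an infinite direct sum is a filtered colimit of finite sums, and finite sums coincide with finite products. These are equivalences of mapping spectra, or of $F$-module spectra, and homotopy groups commute with both products and sums of spectra. Taking $\pi_i$ therefore gives
\[
\pi_i \Hom_\Dd(M, N\otimes E) \cong \prod_k \big(\pi_i\Hom_\Dd(M_k,N)\otimes_F E\big).
\]
Here $\bigoplus_j W = W\otimes_F E$ for any vector space $W$.

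The last step, which I expect to be the main subtlety, is to commute $\prod_k$ with $-\otimes_F E$. This fails in general, and the finite dimensionality hypothesis is what rescues it. We have $\pi_i\Hom_\Dd(M,N) = \prod_k W_k$ with $W_k := \pi_i\Hom(M_k,N)$, and this is finite dimensional by assumption. Hence each $W_k$ is finite dimensional, and only finitely many $W_k$ are nonzero, since otherwise the product would have infinite dimension. The product is then a finite direct sum, and tensoring commutes with finite sums:
\[
\prod_k (W_k\otimes E) = \bigoplus_{k \text{ finite}} (W_k\otimes E) = \Big(\prod_k W_k\Big)\otimes E.
\]

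Finally, check that the identification is the natural map $\pi_i\Hom(M,N)\otimes E \to \pi_i\Hom(M,N\otimes E)$, which is induced by the maps $N \to N\otimes E$ corresponding to the vectors $e\in E$. Naturality then shows the isomorphism is independent of the choice of basis.
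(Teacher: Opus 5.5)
Your proposal is correct and follows essentially the same argument as the paper. You decompose $M$ into compact summands, use a basis of $E$ together with compactness to pull the direct sum out of the second variable, and use finite dimensionality to reduce the product over summands to a finite sum. The only differences are the order in which you apply these steps and your extra remark on naturality and independence of the basis.
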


\begin{proof}
This reduces to a statement about the $F$-linear triangulated category $h\Dd$ in the following way. Write $M$ as a sum of compact objects $\bigoplus_{\al \in A} M_\al$. Then 
\begin{align*}
\pi_i \Hom_\Dd(M,N)
 &= \Hom_{h\Dd}(M[i],N) 
\\
&= \Hom_{h\Dd}(\bigoplus_{\al \in A} M_\al[i], N)
\\
&= \prod_{\al \in A} \Hom_{h\Dd}(M_\al[i], N).
\end{align*}
Since this space is assumed to be finite dimensional, the product is actually a finite sum
\[
\bigoplus_{\al \in A^0} \Hom_{h\Dd}(M_\al[i], N)
\]
indexed by a finite subset $A^0 \subset A$. Let $\Ee$ be a basis for $E$. It follows that
\begin{align*}
\Map_{h\Dd}(M[i],N) \otimes_F E
&= \bigoplus_{\al \in A^0} \Hom_{h\Dd}(M_\al[i],N)^{\bigoplus \Ee}
\\
&= \bigoplus_{\al \in A^0} \Map_{h\Dd}(M_\al[i],N^{\bigoplus \Ee})
\\
&= \Map_{h\Dd}(M[i], N \otimes E).
\end{align*}
Hence,
\begin{align*}
\pi_i\Map_\Dd(M, N\otimes E)
&= \pi_0 \Map_\Dd(M[i], N\otimes E)
\\
&= \Map_{h\Dd}(M[i], N\otimes E)
\\
&= \Map_{h\Dd}(M[i], N) \otimes_F E
\\
& = (\pi_i\Map_\Dd(M,N)) \otimes_F E.
\qedhere
\end{align*}
\end{proof}

\begin{ssLemma}
\label{rep4}
Let $V \in \Dd$. Assume $V$ is a sum of compact objects and assume $\pi_i \Hom_\Dd \big(V, F(0) \big)$ is finite dimensional as an $F$-vector space. Then the functor 
\[
\Vv: \opnm{Aff}_F\op \to \Set
\]
defined on an $F$-algebra $R$ by 
\[
R \mapsto 
\pi_i\Hom_\Dd \big( V, R \otimes F(0)\big)
\]
is representable by the vector group 
\[
\Vv = \Spec \Sym \pi_i \Hom_\Dd \big( V, F(0)\big)^\lor.
\]
\end{ssLemma}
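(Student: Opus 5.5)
The plan is to apply Lemma \ref{D03} with $M = V$, $N = F(0)$ and $E = R$, where $R$ is an arbitrary $F$-algebra regarded as an $F$-vector space. First I will identify the object $R \otimes F(0)$ appearing in the definition of $\Vv$ with $N \otimes E = F(0)\otimes_F R$. Here the tensor is the $\Dd(F)$-action on $\Dd$, restricted to the underlying vector space of $R$. The hypotheses of Lemma \ref{D03} are then exactly the hypotheses of the present lemma: $V$ is a sum of compact objects, and $\pi_i\Hom_\Dd(V,F(0))$ is finite dimensional. The lemma gives a bijection
\[
\pi_i \Hom_\Dd\big(V, R\otimes F(0)\big) \cong W \otimes_F R, \qquad W := \pi_i \Hom_\Dd\big(V, F(0)\big).
\]

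Next I will check that this bijection is natural in $R$. The isomorphism of Lemma \ref{D03} arises from the map $\pi_i\Hom(M,N)\otimes E \to \pi_i\Hom(M,N\otimes E)$ sending $\phi\otimes e$ to $(\mathrm{id}\otimes e)\circ\phi$. This map is evidently functorial in $E$, hence in ring maps $R \to R'$, since a ring map is in particular a linear map. So $\Vv \cong W\otimes_F(-)$ as functors $\Aff_F\op \to \Set$, and even as functors to $F$-modules.

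Finally I will use finite dimensionality to represent $W\otimes_F R$:
\[
W\otimes_F R \cong \Hom_F(W^\lor, R) \cong \Hom_{F\text{-alg}}(\Sym W^\lor, R) = \Hom_{\Aff_F}(R, \Spec\Sym W^\lor),
\]
naturally in $R$. The first isomorphism uses $W \cong W^{\lor\lor}$. This exhibits $\Vv$ as the vector group $\Spec \Sym W^\lor$, which is $\VV^\lor W$ in the notation of \S\ref{notation}.

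The only delicate point I expect is the naturality of the isomorphism in Lemma \ref{D03}. Its proof passes through a chosen basis of $E$, so I will verify directly that the basis-free comparison map above is the map shown there to be an isomorphism.
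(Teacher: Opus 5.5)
Your proposal is correct and is essentially the paper's proof: the paper writes down the same chain of bijections $\Hom_{\Vect(F)}(W^\lor,R) = W\otimes_F R = \pi_i\Hom_\Dd\big(V,R(0)\big) = \Vv(R)$, with the middle step given by Lemma \ref{D03}, and simply asserts that these bijections are natural in $R$. Your check that the basis-dependent isomorphism of Lemma \ref{D03} is the canonical map $\phi\otimes r\mapsto (r\cdot)\circ\phi$ supplies exactly that asserted naturality; one small slip is that the last term of your chain should be $\Hom_{\Aff_F}(\Spec R, \Spec\Sym W^\lor)$ rather than $\Hom_{\Aff_F}(R, \Spec\Sym W^\lor)$.
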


\begin{proof}
For any $F$-algebra $R$, we have bijections
\begin{align*}
& \Hom_{\Vect(F)} \Bigg(
\Big(\pi_i\Hom_\Dd \big(V, F(0) \big)\Big)^\lor, R 
\Bigg)
\\
&= \Big(\pi_i\Hom_\Dd \big(V, F(0) \big)\Big) \otimes R
\\
&= \pi_i\Hom_\Dd \big(V, R(0) \big)
\intertext{(by Lemma \ref{D03})}
&= \Vv(R)
\end{align*}
natural in $R$. 
\end{proof}

\sspar{}
\label{D04}
Conditions (b) and (c) of proposition \ref{BB37} posit that for various $i$, $j$,
$
\pi_j\Vv^i(F)
$
is finite dimensional. Applying Lemma \ref{rep4}, we find that for such $i$, $j$, $\pi_j \Vv^i$ is represented by the vector group $\Spec \Sym \big( \pi_j\Vv^i(F) \big)^\lor$.

\sspar{}
\label{D05}
For the base case ($n=1$), we note that for every $F$-algebra $R$,
\[
\Aa^0(R) = \Map_\Cc(\one, \one \otimes R)
\]
is contractible, and the long exact sequence of homotopy groups associated to the fiber sequence
\[
\Aa^1 \to \ast \to \Vv^0
\]
provides an isomorphism of presheaves of sets
\[
\pi_1 \Vv^0 \xto{\sim} \pi_0 \Aa^1. 
\]
Thus, in particular, $\Aa^0$ is simply connected and $\pi_0\Aa^1$ is representable by an affine space of the stated dimension.

\sspar{}
\label{D06}
Now fix arbitrary $n \in \NN$. For any $F$-algebra $R$ and any $m \in \NN$, we consider the fiber sequence of pointed homotopy types
\[
\Aa^m(R) \xto{f^m} \Aa^{m-1}(R) \xto{g^m} \Vv^{m-1}(R).
\]
We may identify $g^{n-1}$ with a fibration of topological spaces with fiber $f^{n-1}$. Fix arbitrarily $x \in \Aa^{n-1}(R)$ with image $ \in \Aa^{n-2}(R)$ denoted again by $x$; its image in  $\Vv^{n-2}(R)$ lies in the connected component of the base-point $0$. The associated long exact sequence includes the portion 
\[
\pi_2\big(\Vv^{n-2}(R), 0\big) 
\to 
\pi_1\big(\Aa^{n-1}(R), x\big) 
\to 
\pi_1\big(\Aa^{n-2}(R), x\big).
\]
We've assumed that the first group vanishes and we may assume for an induction on $n$ that the last group vanishes, so that $\pi_1\big(\Aa^{n-1}(R), x\big) = 0$ as well.

Under our inductive assumptions, the map
\[
\pi_0\Aa^{n-1} 
\xto{\pi_0g^n} 
\pi_0 \Vv^{n-1}
\]
(being natural in $R$) is a map of finite type affine schemes, so its fiber $Z^{n-1}$ is again a finite-type affine scheme. As part of the long exact sequence of homotopy groups, $f^n$ induces a surjection
\[
\pi_0 \Aa^n(R) \xto{h^n} Z^{n-1}(R).
\]
Moreover, $\pi_1\Vv^{n-1}(R)$ acts freely and transitively on each fiber of $h^n$. Indeed, we may identify $g^n$ with a fibration of topological spaces with fiber $f^n$, in which case this is the general fact recalled in paragraph \ref{BB38}. This completes the induction and establishes Proposition \ref{BB37}. \qed

\subsection{Refined criterion}
\label{sSec:Refined}

We now formulate a criterion for representability that is harder to apply, but applies more broadly.

\begin{ssDefinition}[Presheaf of relative homotopy groups]
\label{BB40}
Let $\Ee$ be an \oo-category, let $f: \Xx \to \Tt$ be a morphism of presheaves of homotopy types, and let $x: \Xx \from \Tt$ be a quasi-section (i.e. $f \circ x \sim \m{id}_\Tt$). Translating $\Xx \to \Tt$ via the equivalence
\[
\tag{*}
\PSh(\Ee_{/\Tt}) \simeq \PSh(\Ee)_{/\Tt},
\]
we obtain a presheaf $\Xx_{/\Tt}$ on the slice category (a ``sliced presheaf'') equipped with a global base point. The associated presheaf of groups $\pi_i(\Xx_{/\Tt}, x)$ is equally a grouplike algebra in sliced presheaves. Translating back via (*) (while noting that (*) is canonically monoidal for the Cartesian monoidal structures), we obtain a grouplike algebra in presheaves over $\Tt$
\[
\pi_i (\Xx / \Tt, x) \to \Tt,
\]
the \emph{presheaf of relative homotopy groups of $\Xx$ over $\Tt$ along $x$}.
\end{ssDefinition}

\begin{ssRemark}
In Definition \ref{BB40}, if $\Tt$ is a presheaf of discrete homotopy types, then $\pi_i(\Xx/\Tt, x)$ is just the presheaf of sets 
\[
S \mapsto
\set{(f, \ga)}
{f: S \to \Tt, \ga \in \pi_i \big(\Xx(S), x \circ f \big)},
\]
which is evidently functorial with respect to morphisms in the homotopy category $h\Ee\op$. In our applications in this article, $\Ee$ will always be a 1-category and $\Tt$ will be representable, hence discrete. 
\end{ssRemark}

\begin{ssRemark}
\label{BB41}
We'll often apply the discussion of paragraph \ref{BB40} to a situation given by a presheaf of homotopy types $\Aa: \Ff\op \to \HoT$ on an \oo-category $\Ff$ (actually a 1-category), an object $L \in \Ff$, and a morphism $g$ of presheaves: 
\[
\xymatrix{
& \Aa \ar[d] 
\\
L \ar[r]_-g \ar@{.>}[ur]^-x & \pi_0 \Aa.
}
\]
Since the map of homotopy types $\Aa(L) \to \pi_0\Aa(L)$ admits a section, the Yoneda lemma provides an $L$-valued base point of $\Aa$, as shown. We then consider the base-change $\Aa_L = L \times_{\pi_0\Aa} \Aa$ of $\Aa$ along $g$ viewed as a presheaf over $L$ equipped with section induced by $x$. Definition \ref{BB40} provides an associated presheaf of relative homotopy groups 
\[
\pi_i(\Aa_L / L, x) \to L.
\]
\end{ssRemark}

\begin{ssProposition}
\label{BB44}
Let $\Ee$ be an \oo-category, suppose given a homotopy pullback of presheaves of homotopy types
\[
\xymatrix{
\Aa' \ar[d]_-{\phi} \ar[r]
&
\ast \ar[d]^{0}
\\
\Aa \ar[r]_-{\psi}
&
\Vv,
}
\]
and let $L$ be the fiber of $\pi_0 \Aa \to \pi_0 \Vv$ over $0$. Assume $\pi_1(\Vv,0)(R)=\pi_1\big(\Vv(R),0\big)$ is abelian for all $R$.  Then the map
\[
\pi_0 \phi: \pi_0 \Aa' \to L \subset \pi_0 \Aa
\]
is a pseudotorsor under a presheaf of abelian groups $Q \to L$ over $L$.
\end{ssProposition}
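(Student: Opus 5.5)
We construct $Q$ objectwise as a cokernel of fundamental groups and then verify the pseudotorsor axioms through the long exact sequence of the fibration $\Aa'(R) \to \Aa(R) \to \Vv(R)$. Here ``pseudotorsor'' means that $Q$ acts on $\pi_0\Aa'$ over $L$, and that the action is free and transitive on each fiber of $\pi_0\phi$. Surjectivity onto $L$ is not claimed, although it does hold objectwise because $L$ is a fiber of $\pi_0\psi$.

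\emph{Construction of $Q$.} We use the description of presheaves over the discrete presheaf $L$ from the remark following Definition \ref{BB40}. For a test object $S$, a section of $Q$ over $S$ will be a pair $(\ell,q)$, where $\ell \in L(S)$ and
\[
q \in \cok\big( \pi_1(\Aa(S),\tilde\ell) \to \pi_1(\Vv(S),0) \big).
\]
Here $\tilde\ell$ is any point of $\Aa(S)$ representing $\ell$, and we choose a path in $\Vv(S)$ from $\psi(\tilde\ell)$ to $0$ to identify $\pi_1(\Vv(S),\psi(\tilde\ell))$ with $\pi_1(\Vv(S),0)$. Since $\pi_1(\Vv(S),0)$ is abelian, the image of $\pi_1(\Aa(S),\tilde\ell)$ is a normal subgroup, and the cokernel $Q_\ell$ is an abelian group.

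Independence of the choices is the first thing to check. Changing the path from $\psi(\tilde\ell)$ to $0$ alters the identification by conjugation by a loop at $0$, which is trivial because the group is abelian. Changing $\tilde\ell$ within its component alters the image subgroup only by such a conjugation together with a path transport, so the cokernel is unchanged up to canonical isomorphism. Functoriality in $S$ then follows because restriction maps carry these choices to choices of the same kind. Making this canonical is cleanest via Remark \ref{BB41}: take the base point $x$ of $\Aa_L$, compose it with $\psi$, and use the relative homotopy groups of Definition \ref{BB40}. This exhibits $Q$ as the presheaf cokernel of
\[
\pi_1(\Aa_L/L,x) \to \pi_1(\Vv_L/L,\psi x),
\]
where the target is identified with the constant family $L \times \pi_1(\Vv,0)$ using the same path-choice argument. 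The cokernel of a map of presheaves of abelian groups over $L$ (the target being abelian) is again a presheaf of abelian groups over $L$.

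\emph{Action and the torsor property.} Objectwise, $\Aa'(S)$ is the homotopy fiber of $\psi$ over $0$, so its points are pairs $(a,\gamma)$ with $a \in \Aa(S)$ and $\gamma$ a path from $\psi(a)$ to $0$. A loop $v \in \pi_1(\Vv(S),0)$ acts by concatenating $\gamma$ with $v$. This is the standard action from paragraph \ref{BB38}, here without any simple-connectivity hypothesis. The exact sequence
\[
\pi_1(\Aa(S),a) \to \pi_1(\Vv(S),0) \to \pi_0\Aa'(S) \to \pi_0\Aa(S) \to \pi_0\Vv(S)
\]
shows two things. First, $\pi_1(\Vv(S),0)$ acts transitively on each fiber of $\pi_0\Aa'(S) \to \pi_0\Aa(S)$. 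Second, the stabilizer of $[(a,\gamma)]$ is the image of $\pi_1(\Aa(S),a)$, transported via $\gamma$; since the group is abelian, this image does not depend on the point chosen in the fiber. Hence the induced action of $Q_\ell$ on the fiber over $\ell$ is free and transitive. Naturality in $S$ follows because restriction along $S' \to S$ commutes with path concatenation.

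\emph{Main obstacle.} The main obstacle is coherence. Everything above is objectwise, and we must ensure that $Q$ and the action genuinely define presheaves over $L$ rather than pointwise data glued through choices. The plan is to rely on abelianness to kill every conjugation ambiguity, and on the relative-homotopy-group formalism of Definition \ref{BB40} and Remark \ref{BB41}, which provides a global base point $x$ over $L$. Once $x$ exists, the action map $Q \times_L \pi_0\Aa' \to \pi_0\Aa'$ is determined by the data above and is natural. Freeness and transitivity are then checked objectwise, as done in the previous step.
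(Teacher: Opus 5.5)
Your proposal is correct and is essentially the paper's proof. You define $Q(S)$ objectwise as pairs consisting of $\ell \in L(S)$ and a class in the cokernel of $\pi_1(\Aa(S),\tilde\ell) \to \pi_1(\Vv(S),0)$, use abelianness to remove the dependence on paths and representatives, identify stabilizers through the long exact sequence of $\Aa'(S) \to \Aa(S) \to \Vv(S)$, and check naturality along restriction maps.

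One caution: your appeal to Remark \ref{BB41} for a global base point over $L$ needs $L$ to be representable. That hypothesis is only added later, in Proposition \ref{MM55}, which is where the paper obtains your relative-homotopy-group description of $Q$. Here you should rely only on the objectwise naturality argument you already gave. That argument suffices, since $Q$ and $\pi_0\Aa'$ are presheaves of sets, so there is no further coherence obstacle.
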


\begin{ssRemark}
Below, we apply Proposition \ref{BB44} only to the case $\Ee = \Aff_k$. The added generality afforded by allowing $\Ee$ to be an \oo-category makes no difference in the proof, since presheaves of homotopy groups factor through the homotopy category $h\Ee$. This added generality may come in handy if we wish to eventually replace $\Aff_k$ by $\Cc(k)\op = \big(\CAlg \Dd(k)\big)\op$.
\end{ssRemark}

\begin{proof}
We view $\Vv$ as pointed and drop the base-point from our notation for homotopy groups. There's an action of $\pi_1 \Vv$ on $\pi_0 \Aa'$ which is transitive on nonempty fibers; we may equally view it as an action of $\pi_0 \Aa \times \pi_1 \Vv$ over $\pi_0 \Aa$. Moreover, given $T \in \Ee$ and $x: \ast \to \Aa'(T)$ a point, the stabilizer of the connected component $[x]$ of $x$ in $\Aa'(T)$ is equal to the image of 
\[
\pi_1(\psi ,\phi x): \pi_1 \big( \Aa(T), \phi x \big) \to \pi_1 \Vv(T).
\] 
Since $\pi_1 \Vv(T)$ is abelian, the image
\[
\Im \big( \pi_1 [\phi x] \big)  = \Im \pi_1(\psi, \phi x)
\]
of $\pi_1(\psi, \phi x)$ depends only on the connected component $[\phi x]$ of
\[
\ast \to \Aa'(T) \to \Aa(T).
\]
Thus, we may unambiguously define $Q(T)$ to be the set of pairs
\[
Q(T) = 
\set{\big([y], [\ga]\big)}
{ [y] \in L(T), 
[\ga] \in \pi_1 \Vv(T) / \pi_1[y]}.
\]
By construction, $Q(T)$ acts freely and transitively on the fibers of $\pi_0 \phi(T)$. 

We now show that the quotient map
\[
L (T) \times \pi_1 \Vv(T) 
\surj Q(T)
\]
of abelian groups over $L(T)$ is natural in $T$. Fix $g: T' \to T$ a morphism in $\Ee$. The morphism of presheaves $\phi$ includes the data of a coherently commuting square of homotopy types (below right)
\[
\xymatrix{
[y|_{T'}] \ar@{}[r]|-\subset & 
\Aa(T') \ar[r] & \Vv(T')
\\
[y] \ar@{}[r]|-\subset \ar[u] & 
\Aa(T) \ar[u]_-{\rho_\Aa} \ar[r] & \Vv(T). \ar[u]_-{\rho_\Vv}
}
\]
We may add to this a choice of compatible connected components, as shown on the left. It follows that the restriction map $\rho_\Vv$ maps $\Im \pi_1[y]$ to $\Im \pi_1 [y|_{T'}]$, hence that $\rho_\Vv$ induces a restriction map $\rho_Q$ for $Q$. We obtain a commuting square
\[
\xymatrix{
L(T') \times \pi_1\Vv(T') \ar[r] & Q(T')
\\
L(T) \times \pi_1\Vv(T) \ar[u] \ar[r] & 
Q(T) \ar[u]_-{\rho_Q},
}
\]
hence the naturality. It also follows that $Q$ is functorial. 
\end{proof}

\begin{ssProposition}
\label{MM55}
In the situation and the notation of paragraph \ref{BB33} ($\Dd \in  \CAlg (\PrLst)_{\Dd(F)/}$, $\Cc = \CAlg \Dd$, ...) suppose given a coherent Hirsch extension $\Cc$ as shown below left
\[
\tag{*}
\xymatrix{
A'   & \one \ar[l]
&&
\Aa' \ar[d]^-{\phi} \ar[r] & \{\ast\} \ar[d] 
\\
A \ar[u] & \Sym V. \ar[l] \ar[u]
&&
\Aa \ar[r]_-\psi \ar[d] & \Vv \ar[d]
\\
&
&
& \pi_0 \Aa \ar[r]^-{\pi_0\psi} & \pi_0 \Vv
\\
&&
L \ar@{.>}[uuur]^-{x'} \ar[ur]_\iota \ar[r] &
\ast \ar[ur]_0
}
\]
We denote associated presheaves of homotopy types by $\Aa'$, $\Aa$, $\Vv$ as shown to the right. Assume the fiber $L$ of $\pi_0\psi$ over the canonical base-point $0$ is representable by an affine scheme. Then $Q$ may be described as the cokernel of a map of presheaves of relative fundamental groups induced by a map of presheaves of homotopy types over $L$. Moreover, the induced map
\[
\pi_0 \phi: \pi_0 \Aa' \to L \subset \pi_0 \Aa
\]
makes $\pi_0 \Aa'$ into a trivial $Q$-torsor over $L$.
\end{ssProposition}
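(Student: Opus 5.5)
We first produce the base point $x'$. Since $L$ is representable, the Yoneda lemma turns the inclusion $\iota\colon L \to \pi_0\Aa$ into an element of $\pi_0\Aa(L)$. Lifting it along the surjection $\Aa(L)\to\pi_0\Aa(L)$, as in Remark \ref{BB41}, gives a point $x\in\Aa(L)$. Its image under $\psi$ lies in the component of $0$, because $\iota$ factors through the fiber over $0$.

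The key observation is that $\Vv(R)=\Hom_\Dd(V,R(0))$ is an infinite loop space: it is the underlying space of a spectrum, so it is a grouplike $E_\infty$-space. Translation by a path therefore identifies every point in the component of $0$ with $0$, canonically up to $\pi_1$. To lift $x$ to $x'\in\Aa'(L)$, we need a nullhomotopy of $\psi(x)\in\Vv(L)$, which exists since $[\psi(x)]=0$ in $\pi_0\Vv(L)$. Choosing one such path $\gamma$ gives an $L$-valued point $x'=(x,\gamma)$ of $\Aa'=\Aa\times_\Vv\ast$ lying over $x$. This is the dotted arrow in the diagram.

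Next we describe $Q$. We base change along $L$ and apply Definition \ref{BB40} to the quasi-sections $x$ of $\Aa_L=L\times_{\pi_0\Aa}\Aa\to L$ and $0$ of $\Vv_L=L\times\Vv\to L$. The map $\psi$ induces a map of presheaves of homotopy types over $L$, from $\Aa_L$ to $\Vv_L$, sending $x$ into the component of $0$. After translating by $\gamma$ using the group structure, this gives a map of presheaves of relative fundamental groups
\[
\pi_1(\Aa_L/L,x)\to \pi_1(\Vv_L/L,0)=L\times\pi_1\Vv .
\]
On $S$-points over a given $y\colon S\to L$, its image is exactly $\Im\pi_1(\psi,\phi x')|_S=\Im\pi_1[y]$ from the proof of Proposition \ref{BB44}. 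The point is that $x\circ y$ is a representative of the component $[y]$, and that image depends only on the component. Hence $Q$ is the presheaf cokernel of this map, which is a map of abelian group objects over $L$ because $\pi_1\Vv$ is abelian. Naturality in $S$ follows because both sides are presheaves and the map comes from a map of presheaves.

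For triviality of the torsor we use Proposition \ref{BB44}, which says that $\pi_0\Aa'\to L$ is a pseudotorsor under $Q$. It remains to show that $\pi_0\phi$ is surjective on $S$-points and to exhibit a global section.

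\begin{itemize}
\item \textbf{Surjectivity.} Given $y\in L(S)$, the pullback $x'\circ y\in\Aa'(S)$ lies over $y$.
\item \textbf{Global section.} The class $[x']\in\pi_0\Aa'(L)$ is a section of $\pi_0\phi$ over $L$.
\end{itemize}

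The action map then gives an isomorphism $Q\to\pi_0\Aa'$ over $L$, sending $q$ to $q\cdot[x'|_S]$. It is natural because the action and the section are compatible with restriction, and this uses the functoriality established in Proposition \ref{BB44}.

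The main obstacle is coherence in the choice of $\gamma$. The identification of the relative $\pi_1$ of $\Vv_L$ at $\psi x$ with that at $0$, and the compatibility of the action with restriction, must be natural in $S\to L$. I would handle this by fixing $\gamma$ once over the universal object $L$ and restricting it along every $S\to L$. Different choices of $\gamma$ change $x'$ by an element of $\pi_1\Vv(L)$, which only changes the trivialization, not the cokernel.
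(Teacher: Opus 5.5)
Your proposal is correct and follows the paper's proof. Choosing a nullhomotopy $\gamma$ of $\psi(x)$ to get $x'=(x,\gamma)\in\Aa'(L)$ is exactly the paper's appeal to the long exact sequence; $Q$ is then identified as the presheaf cokernel of $\pi_1(\Aa_L/L,x)\to L\times\pi_1(\Vv,0)$, and triviality of the torsor comes from the section induced by $x'$. One small correction: you call this a map of abelian group objects, but only the target is abelian, since the source $\pi_1(\Aa_L/L,x)$ generally is not; an abelian target is all that is needed for the cokernel to be a presheaf of abelian groups over $L$.
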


\begin{proof}
By the long exact sequence in homotopy, there exists a point $x': \ast \to \Aa'(L)$ lifting $\iota$. By the Yoneda lemma, $x'$ gives rise to an $L$-valued base-point of $\Aa'$ lifting $\iota$, as indicated in the diagram. 

Let $x$ be the image of $x'$ in $\Aa$. Let $\Aa_L$ be the base-change of $\Aa$ along $L \to \pi_0\Aa$, let $\Vv_0$ be the base-change of $\Vv$ along the zero-map $\ast \to \pi_0\Vv$, and let $(\Vv_0)_L \simeq L \times \Vv_0$ be the further base-change of $\Vv_0$ along $L \to \ast$. Since the square
\[
\xymatrix{
& \Aa \ar[r] & \Vv
\\
L \ar[ur]^-{x} \ar[r] & \ast \ar[ur]_-0
}
\]
commutes up to homotopy, there's an induced map of presheaves of relative fundamental groups over $L$
\[
\pi_1( \Aa_L / L , x)
\to
\pi_1\big( (\Vv_0)_L / L, id_L \times 0 \big)
\simeq
L \times \pi_1(\Vv, 0).
\]
The presheaf cokernel is equal to $Q$.

Finally, the existence of a section of $\Aa'_L \to L$ induced by $x'$ shows that $\pi_0\Aa' \to L$ is a trivial torsor (and not merely a pseudotorsor). 
\end{proof}

\begin{ssCorollary}
\label{BB45}
In the situation and the notation of paragraph \ref{BB35} given by $\Dd \in \CAlg (\PrLst)_{\Dd(F)/}$, $\Cc = \CAlg \Dd$, and $C \in \Cc$ with Iwanari tower $\big\{\fkS_n C, V_n \big\}_n$, and dual tower $\big\{\Aa^n, \Vv^n\big\}_n$, let
\[
L_n = \pi_0\Aa^n \times_{\pi_0 \Vv^n} \{0\}
\]
and let $Q_n \to L_n$ be the presheaf of abelian groups over $L$ constructed in Proposition \ref{BB44}. Fix $n$ and assume that for every $0 \le i \le n$
\begin{itemize}
\item[(a)]
$V_i$ is a sum of compact objects,
\item[(b)]
$\pi_0 \Vv^i(F)$ is finite dimensional,
\item[(c)]
$Q_i$ is representable.  
\end{itemize}
Then $\pi_0 \Aa^{n+1}$ is representable. 
\end{ssCorollary}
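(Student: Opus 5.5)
We argue by induction on $m$ for $0 \le m \le n+1$, proving that $\pi_0\Aa^m$ is representable by an affine $F$-scheme; for $m \le n$ we also get that $L_m$ is representable. The base case $m=0$ is trivial: $\Aa^0(R) = \Hom_\Cc(\one, \one\otimes R)$ is contractible, so $\pi_0\Aa^0 = \ast = \Spec F$.

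For the inductive step, fix $0 \le i \le n$ and assume $\pi_0\Aa^i$ is representable by an affine scheme.
\begin{itemize}
\item[(1)] By hypotheses (a) and (b), Lemma \ref{rep4} (with $V = V_i$ and index $0$) shows that $\pi_0\Vv^i$ is representable by the vector group $\Spec \Sym \big(\pi_0\Vv^i(F)\big)^\lor$. The zero section $\ast \to \pi_0\Vv^i$ is then a closed immersion of affine schemes.
\item[(2)] Hence $L_i = \pi_0\Aa^i \times_{\pi_0\Vv^i}\{0\}$ is a fiber product of affine schemes, so it is representable by an affine scheme. This uses that representable presheaves are closed under limits, computed objectwise, so that $L_i$ agrees with the scheme-theoretic fiber product.
\item[(3)] We now apply Proposition \ref{MM55} to the coherent Hirsch extension $\fkS_i C \to \fkS_{i+1}C$ of paragraph \ref{BB34}. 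Its dual square is precisely the fiber sequence $\Aa^{i+1} \to \Aa^i \to \Vv^i$ of paragraph \ref{BB35}. The standing hypothesis of Proposition \ref{BB44}, that $\pi_1(\Vv^i(R),0)$ is abelian, holds automatically: $\Vv^i(R) = \Hom_\Dd(V_i, R(0))$ is a mapping space in a stable $\infty$-category, hence an infinite loop space. Proposition \ref{MM55} therefore says that $\pi_0\Aa^{i+1}$ is a trivial $Q_i$-torsor over $L_i$. Choosing the trivializing section $x'$ gives an isomorphism of presheaves over $L_i$,
\[
\pi_0\Aa^{i+1} \simeq Q_i,
\]
obtained by acting on $x'$.
\item[(4)] By hypothesis (c), $Q_i$ is representable, so $\pi_0\Aa^{i+1}$ is representable.
\end{itemize}

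Step (4) delivers representability but not affineness, and affineness of $\pi_0\Aa^{i+1}$ is exactly what step (2) needs at the next stage. If "representable" in (c) is read as representable by an affine scheme, the induction closes verbatim. Otherwise steps (1)–(2) should be run with schemes in place of affine schemes: a fiber product of schemes along the closed zero section is still a scheme. One then has to check that Proposition \ref{MM55} still applies; as stated it assumes $L$ is representable by an affine scheme, but its proof only uses the Yoneda lemma to get the $L$-valued base point. I would first try the affine reading, since that is what the applications produce: vector groups over affine bases.

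Running the inductive step from $i = 0$ through $i = n$ yields the representability of $\pi_0\Aa^{n+1}$.

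The main obstacle I expect is step (3). One must check that the torsor isomorphism $Q_i \simeq \pi_0\Aa^{i+1}$ given by acting on the global section $x'$ is an isomorphism of presheaves, i.e.\ natural in $R$, and not merely a bijection objectwise. This comes from the naturality of the action established in the proof of Proposition \ref{BB44}, together with the fact that $x'$ is a global $L_i$-valued point, so that its restrictions are compatible.
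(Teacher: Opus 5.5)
Your proof is correct and is essentially the argument the paper leaves implicit (it states the corollary without writing out a proof): induct on $i$, use Lemma \ref{rep4} to make $\pi_0\Vv^i$ a vector group so that $L_i$ is an affine fiber product, and apply Proposition \ref{MM55} to the Hirsch extension $\fkS_iC \to \fkS_{i+1}C$ to identify $\pi_0\Aa^{i+1}$ with $Q_i$ via the trivializing section. Your hedge about affineness is unnecessary: a presheaf on $\Aff_F$ is representable exactly when it is represented by an object of $\Aff_F$, i.e.\ an affine scheme, so the induction closes as you wrote it.
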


We add to our refined criterion (Corollary \ref{BB45}) a remark (\ref{VF44}) and a proposition (\ref{KK61}) of possible relevance for establishing the criterion in practice.  

\begin{ssRemark}
\label{VF44}
In the situation and the notation of Proposition \ref{MM55}, the sequence 
\[
\xymatrix{
\pi_1(\Aa_L/L,x) \ar[r]^-\al &
L \times \pi_1(\Vv,0) \ar[r] &
\pi_0 \Aa' \ar@{->>}[d]
\\
&& L
}
\]
(in which $\pi_0 \Aa'$ is a trivial torsor over $L$ under $Q = \cok \al$) may be extended to a long exact sequence of presheaves of relative homotopy groups, as we now explain. Diagram \ref{MM55}(Right) gives rise to a coherently commuting diagram (below left)
\[
\tag{*}
\xymatrix{
L \times_{\pi_0 \Aa} \Aa' \ar[r] \ar[d]
&
\ast \ar[d]
&&
L \times_{\pi_0 \Aa} \Aa' \ar[r] \ar[d] &
L \ar[d]
\\
L \times_{\pi_0\Aa} \Aa \ar[r] \ar[d] &
\ast \times_{\pi_0 \Vv}  \Vv  \ar[d]
&&
L \times_{\pi_0 \Aa} \Aa \ar[r] \ar[dr] & 
L \times_{\pi_0 \Vv} \Vv \ar[d]
\\
L \ar[r] & \ast ,
&&
& L . 
}
\]
from which a coherently commuting square in the slice category $\PSh(\Aff_F)_{/L}$, as shown on the right. As a matter of notation, we'll indicate the base changes appearing above right with subscripts ($\Aa'_L$, $\Aa_L$, $\Vv_L$). 

Fix an affine $L$-scheme $f: T \to L$ and let $g$ be the composition 
\[
\xymatrix{
T \ar[rr]^-g \ar[dr]_-f && \pi_0 \Aa
\\
& L \ar[ur]_-h &.
}
\]
Evaluating the sequence (*)$_\text{left}$ at $f:T \to L$, we obtain the sequence
\[
\xymatrix{
\{g\} \times_{\pi_0\Aa(T)} \Aa'(T)
\ar[d]
\\
\{g\} \times_{\pi_0\Aa(T)} \Aa(T)
\ar[r]
&
\{0 \} \times_{\pi_0\Vv(T)} \Vv(T),
}
\]
which is just the result of expunging connected components\footnote{We're free to expunge connected components in the computation of a homotopy fiber in the following sense. Suppose given a pullback square in $\HoT$ as in the right portion of the diagram below
\[
\tag{**}
\xymatrix{
X \ar[r] \ar[d] & X \ar[r] \coprod X' \ar[d]_-{\al \coprod \be} \ar[r] & \ast \ar[d]^-z
\\
Y \ar[r] & Y \ar[r] \coprod Y' \ar[r]_\ga & Z \coprod Z'
}
\]
where $\ga$, $z$ land in $Z$. Then the outer square is again a pullback square, and remains so if we get rid of $Z'$. } from the fiber sequence 
\[
\xymatrix{
\Aa'(T) \ar[d]
\\
\Aa(T) \ar[r]
& \Vv(T).
}
\] 
Thus, the square in diagram (*)(upper right) forms a fiber sequence in $\PSh(\Aff_F)_{/L} \simeq \PSh(\Aff_L)$. Additionally, if $y$ denotes the section of $\Vv_L \to L$ induced by the image of $x$ in $\Vv$, we have for $i\ge 1$,
\[
\pi_i (\Vv_L/L, y) \simeq \pi_i(\Vv_L/L, 0) \simeq L \times \pi_i(\Vv,0),
\]
a canonical isomorphism since $\pi_1$ is abelian. 

In this way we obtain a long exact sequence of presheaves of relative homotopy groups 
\[
\xymatrix{
\pi_*( \Aa'_L / L, x') \ar[d]
\\
\pi_* ( \Aa_L/L, x) \ar[r]
&
L \times \pi_* (\Vv,0) \ar[ul]|-{(-1)}
}
\]
over $L$, in which 
\[
\pi_0( \Aa'_L/L,x') \surj
\pi_0( \Aa_L/L,x) \simeq L
\]
is a trivial torsor under 
\[
Q := \cok
\Big(
\pi_1(\Aa_L/L, x) \to L \times \pi_1(\Vv,0)
\Big).
\]
\end{ssRemark}

\begin{ssProposition}
\label{KK61}

Situation and notation as in paragraph \ref{BB35} given by $\Dd \in \CAlg (\PrLst)_{\Dd(F)/}$, $\Cc = \CAlg \Dd$, and $C \in \Cc$ with Iwanari tower $\big\{\fkS_n C, V_n \big\}_n$, and dual tower $\big\{\Aa^n, \Vv^n\big\}_n$. Fix $n$. Assume that for every $0 \le i \le n$, $V_i$ is a sum of compact objects. Assume for all nonnegative $i,j$ such that $0 \le i+j \le n$, $\pi_i\Vv^j(F)$ is finite dimensional. Then for any global section $x: \ast \to \Aa^n$, the presheaves of groups $\pi_i \big( \Aa^j, x \big)$ ($i+j \le n$, $i \ge 1$) are representable by unipotent groups. 
\end{ssProposition}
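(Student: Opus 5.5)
Our plan is an induction on $j$ that climbs the tower of paragraph \ref{BB35} and uses the long exact sequences of the fiber sequences
\[
\Aa^{j+1} \to \Aa^j \to \Vv^j,
\]
all based at the images of $x$. The base case is $j=0$. Here $\Aa^0$ is contractible (paragraph \ref{D05}), so its homotopy presheaves are trivial. Before the induction we record what the linear terms give. By Lemma \ref{rep4} (using Lemma \ref{D03} and hypothesis (a)), for $i+j \le n$ each presheaf $\pi_i \Vv^j$ is represented by the vector group $\Spec \Sym\big(\pi_i\Vv^j(F)\big)^\lor$. We must also dispose of base points. Since $\Vv^j(R)$ is an infinite loop space, namely the mapping spectrum $\Hom_\Dd(V_j,R(0))$, translation identifies $\pi_i(\Vv^j(R),y)$ with $\pi_i(\Vv^j(R),0)$ canonically and naturally in $R$. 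In particular all these groups are abelian, and the group structure for $i \ge 1$ is the additive structure of the vector group.

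For the inductive step, suppose the $\pi_i(\Aa^j,x)$ with $i+j\le n$, $i \ge 1$ are representable by unipotent groups. We then study the portion of the long exact sequence
\[
\pi_{i+1}\Vv^j \xto{\partial} \pi_i \Aa^{j+1} \to \pi_i \Aa^j \xto{\psi} \pi_i \Vv^j,
\]
which is exact as presheaves since it is exact objectwise in $R$. The steps are as follows.
\begin{itemize}
\item[(i)] $\psi$ is a homomorphism of algebraic groups from a unipotent group to a vector group. Its kernel $K$ is therefore a closed subgroup of a unipotent group, hence unipotent and representable.
\item[(ii)] The map $\pi_{i+1}\Aa^j \to \pi_{i+1}\Vv^j$ has representable image $I$ (a vector subgroup), and the image of $\partial$ is the presheaf quotient $\pi_{i+1}\Vv^j/I$. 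For $i\ge1$ this image is central in $\pi_i\Aa^{j+1}$, by the standard argument that the boundary from $\pi_2$ of the base lands in the center.
\item[(iii)] $\pi_i\Aa^{j+1}$ is then a central extension of $K$ by this quotient.
\end{itemize}

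The key point in step (ii) is that the presheaf cokernel of a map of vector groups $W \to W'$, coming from a linear map of finite-dimensional spaces, is again a vector group. This holds because linear maps split over a field and tensoring with $R$ is exact. We will show the map is linear in this sense by observing that it is induced from its value at $F$ via the identification $\pi_i\Vv^j(R) = \pi_i\Vv^j(F)\otimes R$. The index bookkeeping is that $i+(j+1) \le n$ requires $(i+1)+j \le n$, which holds exactly, so all the groups that appear are among those known by induction or by hypothesis.

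The main obstacle is step (iii): a presheaf central extension of a representable unipotent group by a vector group need not be representable in general. We would proceed as follows.
\begin{itemize}
\item Show that the extension admits a scheme-theoretic (set-theoretic, natural in $R$) section.
\item Conclude that $\pi_i\Aa^{j+1} \cong K \times (\text{vector group})$ as presheaves of sets, with group law given by a cocycle $K\times K \to W$ that is a natural transformation of representable functors. By Yoneda it is then a morphism of schemes, so the result is a unipotent algebraic group.
\end{itemize}
To get the section, we would imitate the proof of Proposition \ref{MM55}. The universal point of $K$, an element of $K(K)$, lifts to $\pi_i\Aa^{j+1}(K)$ because the map onto $K$ is surjective on $K$-points by exactness at $R=\Oo(K)$. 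By Yoneda this lift gives a natural section $K \to \pi_i\Aa^{j+1}$.

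The remaining check is that the presheaf structure on the fibers is indeed a torsor under the image of $\partial$, trivialized by this section. Objectwise this is exactness, using that the kernel of $\pi_i\Aa^{j+1}\to\pi_i\Aa^j$ is the image of $\partial$.
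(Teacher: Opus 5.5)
Your proposal is correct and follows essentially the same route as the paper. You use the same exact sequence, take the kernel of $\pi_i\Aa^j \to \pi_i\Vv^j$ as a unipotent base and the presheaf cokernel of $\pi_{i+1}\Aa^j \to \pi_{i+1}\Vv^j$ as the structure group, and conclude with a trivial-torsor plus extension-of-unipotents argument. You also spell out what the paper leaves implicit: the section via the universal point, as in Proposition \ref{MM55}, and Yoneda for the group law.

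One small repair is needed in step (ii). The identification $\pi_{i+1}\Vv^j(R)=\pi_{i+1}\Vv^j(F)\otimes R$ only controls the target, so to see that the presheaf cokernel is a vector group you should add two facts. First, the source $\pi_{i+1}\Aa^j$ is a commutative unipotent group (commutative since $i+1\ge 2$), hence a vector group in characteristic $0$. Second, homomorphisms of vector groups in characteristic $0$ are linear, so image and cokernel commute with $-\otimes_F R$.
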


\begin{proof}
The base point $x$ endows $\Aa^{j}$ ($j \le n$) with a base point, and we'll use these base points for homotopy groups. We may assume for an induction on $i+j$ that the groups $\pi_i(\Aa^j)$ for $i+j < n$ are unipotent. Since our finiteness assumptions guarantee that the relevant homotopy groups of $\Aa^1$ are representable by vector groups, we may further assume for an induction on $j$ that the homotopy presheaves $\pi_i \big( \Aa^{j'})$ with $i+j' = n$,$j'<j $ are representable by unipotent groups. The exact sequence of presheaves of groups
\[
\xymatrix{
\pi_{i+1} \Aa^{j-1} \ar[r]^-\be &
\pi_{i+1} \Vv^{j-1} \ar[r] &
\pi_{i} \Aa^{j} \ar[d]
\\
&&
\pi_{i} \Aa^{j-1} \ar[r]^-\al &
\pi_{i} \Vv^{j-1}
}
\]
shows first that $\pi_{i} \Aa^{j}$ is a trivial torsor over a unipotent group (the kernel of $\al$) under a unipotent group (the presheaf cokernel of $\be$), hence is representable by a group scheme; it then also shows that this group scheme is an extension of a unipotent group by a unipotent group, hence itself unipotent. 
\end{proof}

\section{Representability in Examples}
\label{Sec:RepEx}

\subsection{Highly structured algebras in motives and realizations}
\label{sSec:Rev}

\sspar{}
\label{EA23}
Let $Z$ be a Noetherian separated scheme. We let 
\[
\DM(Z, \QQ)^\otimes \in \CAlg(\m{Pr}^L_\m{st})_{\Dd(\QQ)/}
\]
be the symmetric monoidal  $\QQ$-linear stable \oo-category of mixed motivic complexes over $Z$ with $\QQ$-coefficients. This is the underlying symmetric monoidal \oo-category of the symmetric monoidal model categories considered by Ayoub \cite{AyoubSixI, AyoubSixII} and by Cisinski-D{\'e}glise \cite{CisDeg}. We recall that as a matter of notation we allow ourselves to identify a symmetric monoidal \oo-category with the underlying \oo-category by dropping the superscript $\otimes$. We let
\[
\DM^\m{il}(Z, \QQ) \subset \DM(Z, \QQ)
\]
be the presentably symmetric monoidal full subcategory spanned by Ind-lisse motives. When the base $Z$ is fixed throughout a discussion, we abbreviate 
\[
\Dd_M = \DM^\m{il}(Z, \QQ).
\]

\sspar{}
\label{EA24}
Let $\Sm^o_Z$ be the category of smoothly open $Z$-schemes (\ref{notation}). The \oo-category $\Dd_M$ comes equipped with a symmetric monoidal functor
\[
C_*: \Sm^o_Z \to \Dd_M,
\]
the \emph{homological motives functor}. Composition with the dualization $M \mapsto M^\lor = \Hhom\big(M, \QQ(0) \big)$ gives rise to the symmetric monoidal \emph{cohomological} motives functor 
\[
C^*: (\Sm_Z^o)\op \to \Dd_M.
\]
The equivalence $\CAlg \big((\Sm_Z^o)\op\big) \simeq (\Sm_Z^o)\op$ means that the latter upgrades canonically to a functor 
\[
C^*: (\Sm_Z^o)\op \to \Cc_M := \CAlg \Dd_M,
\]
the \emph{algebraic motives functor}. See Dan-Cohen--Schlank \cite{rmps_arXiv} for a model categorical approach to this construction, and Iwanari \cite{Iwanari} for a direct infinity-categorical treatment. 

\begin{ssDefinition}
\label{EA25}
Let $X \in \Sm^o_Z$ be a smoothly open $Z$-scheme. We define the \emph{motivic cochain algebra of $X$} to be the highly structured commutative algebra $C^*(X) \in \Cc_M$. We define the \emph{$n$th truncated motivic augmentation space of $X$ by}
\[
\Aaug^n(X) = \Aaug \big(\fkS_n C^*(X)\big).
\]
\end{ssDefinition}

\subsubsection{}
\label{EA26}
Now let $K_\pP$ be a finite extension of $\Qp$, let $\Oo_\pP$ be its ring of integers, and let $Z_\pP = \Spec \Oo_\pP$. Let $\MFphi(K_\pP)$ be the $\Qp$-Tannakian category of admissible filtered $\phi$ modules over $K_\pP$. Let $\Dd\big(\Ind \MFphi(K_\pP)\big)$ be the symmetric monoidal derived \oo-category of its Ind-completion. When the base $Z_\pP$ is clear from the context, we'll abbreviate
\[
\Dd_\Fphi = \Dd\big(\Ind \MFphi(K_\pP)\big)
\]
Work of Déglise-Niziol \cite{DegliseNiziol} and others provides a symmetric monoidal \oo-categorically enhanced realization functor
\[
\Re_\Fphi: \DM^\m{il}(Z_\pP, \QQ) \to 
\Dd_\Fphi,
\]
hence also a functor 
\[
\Re_\Fphi: \CAlg
\DM^\m{il}(Z_\pP, \QQ) \to 
\Cc_\Fphi := \CAlg \Dd_\Fphi.
\]
Composing with the cohomological motives functor, we obtain a functor
\[
C^*_\Fphi: (\Sm^o_{Z_\pP})\op  \to 
\Cc_\Fphi.
\]

\begin{ssDefinition}
\label{EA27}
Let $X \to Z_\pP$ be a smoothly open variety. We define the \emph{filtered $\phi$ cochain algebra of $X$} to be the filtered $\phi$ realization 
\[
C^*_\Fphi(X) = \Re_\Fphi C^*(X)
\]
of the motivic cochain algebra $C^*(X)$. We define the \emph{$n$th truncated filtered $\phi$ augmentation space of $X$ by}
\[
\Aaug^n_\Fphi(X) = \Aaug \big(\fkS_n C_\Fphi^*(X)\big).
\]

\end{ssDefinition}

\subsection{$\Gm$-bundles over semiabelian schemes}
\label{sSec:GmBun}

\subsubsection{Semiabelian schemes}
\label{TC11}
Let $Z$ be a Noetherian scheme and let $G \to Z$ be a semiabelian $Z$-scheme. By Ancona et al. \cite{AnconaHuberII} and Iwanari \cite{Iwanari},
\[
C^*(G) \simeq \Sym \big(H^1[-1]\big)
\]
is the free commutative algebra on a compact object $H^1[-1] \in \Dd_M$. Hence, whenever 
\[
\tag{*}
\pi_0 \Hom_{\Dd_M} \big( H^1 [-1], \QQ(0) \big) 
\simeq G(Z) \otimes \QQ
\]
is finite dimensional, Lemma \ref{rep4} applies to show that 
\[
\Aug \big( C^*(G) \big)
\simeq
\VV^\lor G(Z) \otimes \QQ. 
\]

\begin{ssTheorem}
\label{TC12}
Let $L^\star \to G \to Z$ be a $\Gm$-bundle over a semiabelian scheme over a Noetherian scheme $Z$ such that $G(Z)$ and $\Gm(Z)$ have finite rank. Assume $Z$ satisfies
\[
\tag{*}
\pi_0\Hom_{\Dd_M(Z)} \big( \QQ(0), \QQ(1)[2] \big) = 0.
\]
Then the presheaf of sets $\Aug \big( C^*(L^\star) \big)$ is representable by a trivial torsor over the vector group $\VV^\lor G(Z) \otimes \QQ$ under the vector group $\VV^\lor \Gm(Z)\otimes \QQ$.
\end{ssTheorem}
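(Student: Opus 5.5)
The plan is to exhibit $C^*(L^\star)$ as a coherent Hirsch extension of $C^*(G) \simeq \Sym(H^1[-1])$ by the Tate object $\QQ(-1)[-2]$, and then to apply Proposition \ref{MM55} to the dual fiber square of augmentation presheaves.

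\textbf{Step 1: the Hirsch extension.} A $\Gm$-bundle $L^\star \to G$ has a Gysin/Thom triangle
\[
C^*(G) \to C^*(L^\star) \to C^*(G)(-1)[-1],
\]
and the boundary of this triangle is multiplication by the first Chern class $c \in \pi_0\Hom_{\Dd_M}\big(\QQ(-1)[-2], C^*(G)\big)$. I would use the motivic analogue of the classical fact that the cochains on a circle bundle are the Hirsch extension of the base by its Euler class. Concretely, I claim
\[
C^*(L^\star) \simeq C^*(G) \otimes_{\Sym(\QQ(-1)[-2])} \one,
\]
where $\Sym(\QQ(-1)[-2]) \to C^*(G)$ is induced by $c$. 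To prove this:
\begin{itemize}
\item Reduce to the universal case $\Gm \to B\Gm$, i.e.\ the tautological bundle over $\PP^\infty$, whose cochain algebra is $\Sym(\QQ(-1)[-2])$. By Iwanari/Ancona et al.\ I expect this to be available.
\item Pull back along the classifying map $G \to B\Gm$.
\item Use that $C^*$ converts this Cartesian square, a $\Gm$-torsor pullback, into a pushout of algebras. This is an Eilenberg--Moore type statement for Tate fibers. It can be checked on underlying objects via the Gysin triangle, since tensoring with the Koszul resolution $\one \simeq \Sym(\QQ(-1)[-2]) \otimes_{\Sym(\QQ(-1)[-1])}\one$ gives precisely a cone on multiplication by $c$.
\end{itemize}

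\textbf{Step 2: the augmentation fiber square.} Applying $\Aaug$ to the pushout yields a homotopy pullback
\[
\Aa' := \Aaug(C^*(L^\star)) \to \Aa := \Aaug(C^*(G)) \xto{\psi} \Vv := \Aaug(\Sym \QQ(-1)[-2]).
\]
Here $\Vv(R) = \Hom_{\Dd_M}(\QQ(-1)[-2], R(0))$, and its homotopy groups are
\[
\pi_0\Vv(R) = \pi_0\Hom(\QQ(0),\QQ(1)[2])\otimes R = 0,
\]
which vanishes by hypothesis (*) together with Lemma \ref{D03}, and
\[
\pi_1\Vv(R) = \pi_0\Hom(\QQ(0),\QQ(1)[1])\otimes R = \Gm(Z)_\QQ\otimes R,
\]
which is abelian and, by Lemma \ref{rep4}, is represented by $\VV^\lor \Gm(Z)\otimes\QQ$. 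Since $\pi_0\Vv = 0$, the fiber $L$ equals all of $\pi_0\Aa$, and by \ref{TC11} this is the vector group $\VV^\lor G(Z)\otimes\QQ$, in particular an affine scheme. Proposition \ref{MM55} then shows that $\pi_0\Aa'$ is a trivial torsor over $L$ under $Q = \cok\big(\pi_1(\Aa_L/L,x)\to L\times\pi_1(\Vv,0)\big)$.

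\textbf{Step 3: $Q$ is the constant vector group.} It remains to show $Q \simeq L \times \VV^\lor\Gm(Z)\otimes\QQ$, i.e.\ that the map $\al$ from Remark \ref{VF44} vanishes. Since $C^*(G)$ is free on $H^1[-1]$, we have
\[
\pi_1\Aa(R) = \pi_1\Hom_{\Dd_M}(H^1[-1], R(0)) = \pi_0\Hom(H^1, R(0)).
\]
Because $H^1$ is built from $\QQ(-1)$ and $h^1$ of an abelian scheme of weight one, this group is $\Hom(H^1,\QQ(0))\otimes R$, which vanishes for weight reasons. Beilinson--Soulé is not needed, since these are negative-weight Homs with no shift (hom from weight $-1$ pieces to $\QQ(0)$ in degree $0$). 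This vanishing should be checkable via the decomposition of $h^1$ and Ancona et al.; it holds at every base point, since the homotopy groups of the infinite loop space $\Aa(R)$ do not depend on the base point. Hence $\al = 0$ and $Q$ is the constant vector group.

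The main obstacle is Step 1: establishing the multiplicative (algebra-level) identification of $C^*(L^\star)$ as a Hirsch extension, rather than merely the Gysin triangle, and in particular the Eilenberg--Moore pushout property for the torsor square. A secondary difficulty is the weight vanishing of $\pi_0\Hom(H^1,\QQ(0))$ in Step 3.
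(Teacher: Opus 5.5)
Your argument follows the paper's route. The paper takes the algebra-level pushout $C^*(L^\star)\simeq C^*(G)\otimes_{\Sym C^*(G)}\Sym\ep(L^\star)$ directly from Theorem 7.2 of \cite{DCBetts}, rather than deriving it from $B\Gm$ and an Eilenberg--Moore comparison as in your Step 1. Since $\Sym$ preserves colimits and $\ep(L^\star)$ is the cofiber of $c_1(L)$, this pushout is equivalent to your Hirsch extension $C^*(G)\otimes_{\Sym\QQ(-1)[-2]}\one$, so you can cite it in place of your sketch. The paper then reads off, as in your Step 2, a pseudotorsor structure under $\Hom_{\Dd_M}(R(0),R(1)[1])$, with nonemptiness coming from (*). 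Your Step 3 shows that $\pi_1$ of $\Hom_{\Dd_M}(H^1[-1],R(0))$ vanishes at every base point, so $Q$ is the constant group $\VV^\lor\Gm(Z)\otimes\QQ$. This makes explicit the freeness of the action on $\pi_0$, which the paper's appeal to \cite[\S3]{DCBetts} leaves implicit.
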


\begin{ssRemark}
\label{TC13}
When the line bundle $L$ associated to $L^\star$ has degree 0, $L^\star$ is again a semiabelian $Z$-scheme, and Theorem \ref{TC12} is essentially a special case of paragraph \ref{TC11}. For general $L$, as we'll see presently, Theorem \ref{TC12} follows from Theorem 7.2 of joint work with L. Alexander Betts \cite{DCBetts}, concerning the structure of the motivic algebra of a $\Gm$-bundle over any variety; the proof of the latter is somewhat involved.
\end{ssRemark}

\begin{proof}[Proof of Theorem \ref{TC12}]
Theorem 7.2 of \cite{DCBetts} provides a pushout square (lower portion of the following diagram)
\[
\xymatrix{
\QQ(-1)[-2] \ar[r]^{c_1(L)} & 
C^*(G) \ar[r] \ar[d] & 
\ep(L^\star) \ar[r] \ar[d] &
\QQ(-1)[-1] 
\\
& \Sym C^*(G) \ar[d] \ar[r] & 
\Sym \ep(L^\star) \ar[d]
\\
& C^*(G) \ar[r] & 
C^*(L^\star)
}
\]
in which $\ep(L^\star)$ is given by the cofiber (in $
\Dd_M$) of the motivic 1st Chern class of $L$, as shown. Fix $R$. The exact triangle at the top gives rise to a triangle of mapping spectra 
\begin{small}
\begin{align*}
\SHom \big( C^*(G), R(0) \big)
& \from
\SHom \big( \ep(L^\star), R(0) \big)
\from
\SHom \big( R(0), R(1)[1] \big).
\\
\SHom \big( R(0), R(1)[2] \big)
\from &
\end{align*}
\end{small}
This gives $\Hom_{\Cc_M}\big( C^*(L^\star), R \otimes \one \big)$ the structure of a pseudotorsoric
\[
\Hom_{\Dd_M} \big(R(0), R(1)[1] \big)\text{-module}
\]
over $\Hom_{\Cc_M} \big( C^*(G), \one \otimes R \big)$ \cite[\S3]{DCBetts}. Moreover, for nonzero $R$, the homotopy type $\pi_0\Hom_{\Cc_M}\big( C^*(L^\star), R \otimes \one \big)$ is nonempty under our assumption \ref{TC12}(*).
\end{proof}

\subsection{Punctured elliptic curves}
\label{sSec:Ell}

\subsubsection{}
\label{EB12}
Let $Z$ be an open subscheme of $\Spec \ZZ$ and let $X \to Z$ be the complement of the zero section in an elliptic scheme $E \to Z$. We begin by recalling Iwanari's computation of $\fkS_3C^*(X)$ in outline, while observing that the same computations hold integrally. As in paragraph \ref{TC11}, $C^*(E)$ is the free commutative algebra
\[
C^*(E) = \Sym (H^1[-1]) \simeq 
\QQ(0) \oplus H^1[-1] \oplus \QQ(-1)[-2]
\]
on an object
\[
H^1[-1] = H^1(E)[-1] \in \Dd_M. 
\]
The object $H^1$ is compact and satisfies $\Lambda^3 H^1 = 0$. It follows by Proposition 5.2 of Iwanari \cite{Iwanari} that there's a symmetric monoidal colimit preserving functor
\[
\si:\Dd \Rep \GL_{2, \QQ} \to \Dd_M
\]
from the derived \oo-category of $\GL_2$-representations, which sends the standard representation to $H^1$.

We have $C^*(X) \simeq \QQ(0) \oplus H^1[-1]$; the map $C^*(X) \from C^*(E)$ induced by the inclusion corresponds to the projection. We'll abbreviate
\[
C := C^*(X).
\]

\subsubsection{}
\label{EB13}
We argue that the map $\fkS_1C \to C$ is equivalent to the map induced by the projection
\[
\QQ(0) \oplus H^1[-1] \oplus \QQ(-1)[-2]
\surj
\QQ(0) \oplus H^1[-1].
\]
For this, it's enough to show the map
\[
\mu:\QQ(-1)[-2] \to H^1[-1]
\]
is homotopic to 0 in $\Dd_M$. We have
\[
\tag{*}
H_1 \simeq H^1(1),
\]
and we have
\begin{align*}
\tag{**}
\Hom_{\Dd_M} \big(H^1[-1], \QQ(0) \big)
&= \Hom_{\Cc_M} \big(C^*(E), \one  \big)
\\
&= \Aaug(E)
\\
&=E(Z) \otimes \QQ.
\end{align*}
Together,
\begin{align*}
\Hom_{\Dd_M} \big( \QQ(-1)[-2], H^1[-1] \big)
&=
\Hom_{\Dd_M} \big( H_1[1], \QQ(1)[2] \big)
\\
&= \Hom_{\Dd_M} \big( H^1[-1], \QQ(0) \big)
\\
&= E(Z) \otimes \QQ.
\end{align*}
The pullback
\[
\Hom_{\Dd_M(\Spec \QQ)}
\big( H_1[1], \QQ(1)[2] \big)
\xfrom{\iota^*}
\Hom_{\Dd_M(Z)} \big( H_1[1], \QQ(1)[2] \big)
\]
along $\iota: \Spec \QQ \to Z$ corresponds to the isomorphism
\[
E(\QQ) \otimes \QQ \xfrom{\sim}
E(Z) \otimes \QQ.
\]
In particular, it's injective. So it's enough to show $\iota^*\mu \simeq 0$. But this is established by Iwanari. 

\subsubsection{}
\label{EB14}
Hence 
\[
V_1 = \QQ(-1)[-2]
\]
and $\fkS_2C$ is given by the pushout
\[
\tag{*}
\xymatrix{
\Sym \QQ(-1)[-2]
\ar[r] \ar[d]
& \Sym H^1[-1]
\ar[d]
\\
\one
\ar[r]
&
\fkS_2C. 
}
\]
By Proposition 5.2 of Iwanari \cite{Iwanari},
\footnote{The proposition from loc. cit. is stated only for $\Dd_M$ over a field, but in fact the result is proved in far greater generality.}
this pushout may be computed as a homotopy pushout in the model category of $GL_2$-equivariant cdga's, where the underlying graded algebra is given by the tensor product
\begin{align*} 
\fkS_2C 
&= \Sym \big( \QQ(-1)[-1] \big) \otimes \Sym \big( H^1[-1] \big) 
\\ 
&= \big( \QQ(0) \oplus \QQ(-1)[-1] \big) \otimes \big( \QQ(0) \oplus H^1[-1] \oplus \QQ(-1)[-2] \big) 
\\ 
&= \!\begin{aligned}[t]
\QQ(0) \oplus H^1[-1] \oplus H^1(-1)[-2] & \oplus \QQ(-1)[-1] \\ 
   &\quad \oplus \QQ(-1)[-2] \oplus \QQ(-2)[-3] 
   \end{aligned}
\end{align*}
and the underlying complex of $\GL_2$-representations is given by
\[
\QQ(0) \xto{0} 
\QQ(-1)\oplus H^1 \xto{(\m{Id}, 0)} 
\QQ(-1) \oplus H^1(-1) \xto{} \QQ(-2)
\]
placed in cohomological degrees $[0,3]$. Thus, the underlying complex, and hence the underlying object of $\Dd$, is equivalent to 
\[
\fkS_2C \simeq 
\QQ(0) \oplus H^1[-1] \oplus W_2 
\]
where
\[
W_2 = H^1(-1)[-2] \oplus \QQ(-2)[-3].
\]

\sspar{}
\label{EB15}
We argue that the fiber of the map
\[
\tag{*}
\xymatrix
@R=5pt
{
\fkS_2C \ar[r] \ar@{=}[d] 
& C \ar@{=}[d]
\\
\QQ(0) \oplus H^1[-1] \oplus W_2
&
\QQ(0) \oplus H^1[-1]
}
\]
is isomorphic to $W_2$. It's a general fact that given a diagram
\[
\xymatrix{
X \oplus Y \ar[r]^-f & X
\\
X \ar[u]^-{\m{coproj}} \ar[ur]_-g^-\sim
}
\]
in a triangulated category (in which $g$ is iso), the sequence
\[
Y \xto{(-g\inv \circ f, \m{Id}_Y)} X \oplus Y \xto{f} X
\]
is exact. So it's enough to check that the endomorphism of $\QQ(0) \oplus H^1[-1]$ induced by (*) is an isomorphism, which essentially follows from the construction. Thus, we have
\[
V_2 = W_2. 
\]

\sspar{}
\label{EB16}
Gathering what we know about the corresponding K-groups, we find that the lower portion of the tower and dual tower are given by
\begin{footnotesize}
\[
\xymatrix
@ C = 2ex
{
\fkS_3C && 
\Aa^3 \ar[d]
\\
\fkS_2C \ar[u] & \Sym \big(H^1(-1)[-2] \oplus \QQ(-2)[-3]\big) \ar[l]
& \Aa^2 \ar[d] \ar[r]
&\VV^\lor K^{(2)}_{\bullet +1}(X)
\\
\Sym \big(H^1[-1]\big) \ar[u] & \Sym \big(\QQ(-1)[-2]\big)  \ar[l]
& \VV^\lor E(Z)_\QQ \ar[d] \ar[r]
& \Kk(\VV^\lor \Oo^*_{Z, \QQ}, 1) 
\\
\one \ar[u] &
\Sym \big(H^1[-2]\big) \ar[l]
& 
\ast \ar[r]
&
\Kk \big( \VV^\lor E(Z)_\QQ, 1 \big),
}
\]
\end{footnotesize}
where $\VV^\lor K^{(2)}_{\bullet+1}(X)$ denotes a presheaf of pointed homotopy types whose homotopy presheaves are given by
\[
\pi_i \VV^\lor K^{(2)}_{\bullet+1}(X) = 
\VV^\lor K^{(2)}_{i+1}(X).
\]

\sspar{}
\label{EB17}
Applying the proof of proposition \ref{BB37} we find that $\pi_0 \Aa^3$ is representable by a tower of torsors under vector groups as follows:
\[
\xymatrix{
\ast \ar[r] & \VV^\lor K^{(2)}_2(X)
\ar@{}[r]|-{\rotatebox{-90}{\scalebox{2}{$\circlearrowright$}}} & \pi_0 \Aa^3 \ar[d]
\\
\ast \ar[r] & \VV^\lor  \Oo^*_{Z, \QQ}
\ar@{}[r]|-{\rotatebox{-90}{\scalebox{2}{$\circlearrowright$}}}
& \pi_0 \Aa^2 \ar[r] \ar[d] & \ast
\\
&& \VV^\lor E(Z)_\QQ \ar[r] \ar[d] & \ast
\\
&& \ast &.
}
\]

\subsection{Tate-cellular categories}
\label{sSec:TCell}

\sspar{}
\label{KG48}
We return to a general field $F$ of characteristic 0. A mixed Tate Tannakian category over $F$ is a neutral Tannakian category with an object of rank 1 denoted $F(1)$ such that the objects $F(n) := F(1)^{\otimes n}$ represent the simple objects of $T$, and
\[
\Ext^i \big(F(0), F(n) \big) = 0
\quad \text{for all } n<0.
\]
We say $T$ is \emph{locally of finite type} if the extension spaces $\Ext^i \big( F(0), F(n) \big)$ are finite dimensional over $F$, and we say $T$ is \emph{free} if
\[
\Ext^i \big( F(0), F(n) \big) = 0 \quad \text{for } i >1. 
\]
In the presence of a locally free, finite type mixed Tate Tannakian category $T$, we let
\[
E_n = E_n(T) := \Ext^1_T 
\big( F(0), F(n) \big).
\]

\begin{ssExamples}
\label{LA55}
The category of mixed Tate motives over an open integer scheme $Z$ is locally of finite type and free with 
\[
E_n = K_{2n-1}^{(n)}(Z).
\]
Thus, $E_1 = \Oo(Z)^\ast \otimes \QQ$, and for $n>1$, 
\[
\dim_\QQ E_n = 
\begin{cases}
s & n \text{ even} \\
r+s & n \text{ odd},
\end{cases}
\]
where $r$ is the number of real points and $s$ the number of conjugate pairs of complex points.

The category of mixed Tate filtered $\phi$ modules over a $p$-adic field $K_\pP$ is locally of finite type and free with extension spaces 
\[
E_n \simeq K_\pP
\]
of dimension 
\[
\dim_{\Qp} E_n = [K_\pP: \Qp].
\]
\end{ssExamples}

\begin{ssDefinition}
\label{LA57}
We say that the presentably symmetric monoidal $F$-linear \oo-category $\Dd \in \CAlg(\PrLst)_{\Dd(F)/}$ equipped with a t-structure $(\Dd^{\le 0}, \Dd^{\ge 0})$ is \emph{Tate-cellular} if it's equivalent to $\Ind \Dd^b(T)$ for some mixed Tate Tannakian category $T$ over $F$. We import the adjectives from above: $\Dd$ is \emph{locally of finite type} if $T$ is locally of finite type, and \emph{free} if $T$ is free.
\end{ssDefinition}

\begin{ssProposition}
\label{KG50}
Let $T$ be a locally finite type free mixed Tate Tannakian category over $F$, and let $\Dd = \Ind \Dd^b(T)  \in \CAlg (\PrLst)_{\Dd(F)/}$ be the associated Tate-cellular category. As above, we let 
\[
E_n(T) = \Ext^1_T \big( F(0), F(n) \big).
\]
Let $V_{i,j} = F(-i)[-j]$. Then  the associated presheaf of homotopy types 
\[
\Vv_{i,j}: \opnm{Aff}_F\op \to \HoT
\]
\[
\Vv_{i,j}(R) = \Hom_\Dd \big( F(0), R(i)[j] \big)
\]
satisfies
\[
\Vv_{i,j} = 
\begin{cases}
\ast & i < 0 \\
\Kk ( \GG_{a,F}, j) & i = 0 \\
\Kk \big( \EE_i(T), j-1 \big) & i >0,
\end{cases}
\]
where we set
\[
\Kk \big( ?, l \big) := \ast 
\quad \text{if } l<0.
\]
\end{ssProposition}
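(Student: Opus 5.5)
The plan is to compute the homotopy groups of the mapping spectrum, show that they assemble into a single Eilenberg--MacLane piece, and check naturality in $R$ so that we get an identification of presheaves. Since $\Dd$ is stable and $F$-linear, $\Hom_\Dd\big(F(0), R(i)[j]\big)$ is the infinite loop space of an $F$-module spectrum, i.e. of an object of $\Dd(F)$. Such a space is a product of Eilenberg--MacLane spaces on its homotopy groups, and this splitting can be chosen functorially in $R$: the whole construction factors through the $F$-linear structure, and $\Dd(F)$-modules are formal. So it suffices to compute
\[
\pi_k \Hom_\Dd\big(F(0), R(i)[j]\big) = \Hom_{h\Dd}\big(F(0), R(i)[j-k]\big), \qquad k \ge 0.
\]

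The first step is to pass from $R$ to $F$. The object $F(0)$ is compact in $\Ind \Dd^b(T)$, and $R(i) = F(i)\otimes R$ is a filtered colimit (a direct sum over a basis) of copies of $F(i)$. Hence
\[
\Hom_{h\Dd}\big(F(0), R(i)[m]\big) = \Ext^m_T\big(F(0),F(i)\big)\otimes_F R .
\]
This is the argument of Lemma \ref{D03}, and it needs no finiteness here since the source is compact. I am using that $\Hom$ in $\Dd^b(T)$ computes Yoneda $\Ext$ in $T$, and that this is unchanged after passing to $\Ind$ for compact objects.

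The second step is the case analysis, using the mixed Tate hypotheses.
\begin{itemize}
\item For $i<0$, all $\Ext^m(F(0),F(i))$ vanish by assumption. For $m<0$ they vanish by the t-structure. Hence the space is contractible, giving $\ast$.
\item For $i=0$, simplicity of $F(0)$ together with $\Ext^1(F(0),F(0))=0$ is needed. I take this as part of the mixed Tate axioms (weights): since $F(0)$ represents a simple object, its endomorphisms are $F$, and extensions of $F(0)$ by itself are trivial. Freeness kills $\Ext^{\ge 2}$. Then only $m=0$ survives, i.e. $k=j$, with value $R$. This gives $\Kk(\GG_{a,F},j)$, or $\ast$ if $j<0$.
\item For $i>0$, $\Hom(F(0),F(i))=0$ since the two are non-isomorphic simple objects, and freeness kills $\Ext^{\ge 2}$. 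So only $m=1$ survives, i.e. $k=j-1$, with value $E_i\otimes R$. By the finite-type hypothesis and Lemma \ref{rep4}, this is the vector group $\EE_i(T)=\VV^\lor E_i$, giving $\Kk(\EE_i(T),j-1)$, or $\ast$ if $j-1<0$.
\end{itemize}

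The main obstacle is the claim that the presheaf of spaces, not merely its homotopy presheaves, is equivalent to $\Kk(-,l)$. My approach is to observe that $R \mapsto \Hom_\Dd\big(F(0),R(i)[j]\big)$ is $\Omega^\infty$ of a presheaf of $F$-module spectra with a single nonzero homotopy presheaf. Such a presheaf is equivalent to the shift of its heart-valued homotopy presheaf, by the Postnikov description of objects of $\Dd(F)$ with one homotopy group, applied objectwise and functorially via the t-structure truncation.

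A secondary point to check is the $i=0$ vanishing of $\Ext^1(F(0),F(0))$. If the definition is read literally (vanishing only for $n<0$), I would argue that it holds in the intended weight-graded setting, as it does in both examples of \ref{LA55}.
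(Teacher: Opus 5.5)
Your proposal is correct and follows the paper's route. The paper's proof is the single line ``direct consequence of Lemma~\ref{rep4}'', which amounts to your computation $\pi_k\Vv_{i,j}(R)\simeq\Ext^{j-k}_T\big(F(0),F(i)\big)\otimes_F R$ from compactness of $F(0)$, plus the mixed Tate case analysis. Your single-homotopy-presheaf argument via the pointwise t-structure supplies a step the paper leaves implicit, namely identifying the presheaf of spaces itself with $\Kk(-,l)$; keep that argument, and drop the blanket claim of functorial formality in your first paragraph, which fails for diagrams in general and is not needed.

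Your caveat at $i=0$ is well founded. The paper's definition as written only requires vanishing for $n<0$, and $\Rep(\GG_a\times\GG_m)$ satisfies that definition with $\Ext^1\big(F(0),F(0)\big)=F$. So the statement genuinely needs the standard Deligne--Goncharov condition $\Ext^1\big(F(0),F(n)\big)=0$ for $n\le 0$, exactly as you assume and as the paper tacitly does.
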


\begin{proof}
Direct consequence of Lemma \ref{rep4}.
\end{proof}

\subsection{The four line complement}
\label{sSec:4L}

When $X$ is the projective plane minus four lines in general position, the vanishing of $\pi_2$ required in Proposition \ref{BB37} fails already for $n = 2$. We show that our refined criterion, Proposition \ref{BB45} does apply, albeit in a somewhat simplistic way: in the notation of that proposition, $Q_1 = 0$. 

\sspar{}
\label{FA11}
Let 
$
X = \Spec \ZZ[x,y,x\inv, y\inv, (x+y-1)\inv].
$
Then
\[
\tag{*}
C_*(X) = \QQ(0) \oplus \QQ(1)[1]^{\oplus 3} \oplus \QQ(2)[2]^{\oplus 3}.
\]
To see this, we first apply the K\"unneth formula to $\Gm \times \Gm$ to conclude that 
\[
C_*(\Gm \times \Gm) = C_*(\Gm) \otimes C_*(\Gm) = 
\QQ(0) \oplus \QQ(1)[1]^{\oplus 2} \oplus \QQ(2)[2]
\]
and then the Gysin sequence \cite[15.5.4]{Mazza}
\[
C_*(Y \setminus Z) \to C_*(Y) \to C_*(Z)(c)[2c] \to C_*(Y \setminus Z)[1]
\]
to $Y = \Gm \times \Gm$ and $Z = \{x+y = 1\} \simeq \thrpl$
to obtain an exact triangle
\[
C_*(X) \to \QQ(0) \oplus \QQ(1)[1]^2 \oplus \QQ(2)[2] 
\xto{\phi}
\QQ(1)[2] \oplus \QQ(2)[3]^2 \to C_*(X)[1].
\]
Referring to Examples \ref{LA55} ($Z = \Spec \ZZ$) and Proposition \ref{KG50}, we see that $\phi$ occupies a connected mapping space, hence must be homotopic to zero. The formula (*) follows. 

\sspar{}
\label{FA12}
We let $W \in \{M, \Fphi\}$ (motives or filtered $\phi$ modules), and we let $F$ denote the associated coefficient field $\QQ$ or $\Qp$. We prepare for our computation of the second level in the Iwanari tower by observing that maps 
\[
F(i)[j]^{\oplus n} \to F(i)[j]^{\oplus m}
\]
have uninteresting fibers:

\begin{ssProposition}
\label{15j1}
Given $i, j \in \ZZ$, a morphism
\[
f: F(i)[j]^{\oplus n} \to F(i)[j]^{\oplus m}
\]
has a natural notion of \emph{rank}, and if $r$ denotes the rank and we set $k := n - r$ and $q := m-r$, then 
\[
\Fib(f) \simeq F(i)[j]^{\oplus k} \oplus F(i)[j-1]^{\oplus q}.
\]
\end{ssProposition}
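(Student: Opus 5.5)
The plan is to reduce the statement to linear algebra over $F$ by computing the endomorphism ring of $F(i)[j]$ in $\Dd_W$. First, shifting and twisting are autoequivalences of $\Dd_W$ (twisting is tensoring with the invertible object $F(i)$). So we may assume $i=j=0$ and treat $f$ as a map $F(0)^{\oplus n}\to F(0)^{\oplus m}$. Next,
\[
\pi_0\Hom_{\Dd_W}\big(F(0),F(0)\big) = \Ext^0\big(F(0),F(0)\big) = F .
\]
For $W = \Fphi$ this holds because $\Dd_\Fphi$ is the derived category of $\Ind\MFphi(K_\pP)$ and $F(0)$ is a simple object with $\End = \Qp$. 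For $W = M$ we use Proposition \ref{KG50} with $i=0$: $\pi_0\Vv_{0,0}(F) = F$, i.e.\ $\Hom(\QQ(0),\QQ(0)) = K_0^{(0)}(Z)_\QQ = \QQ$. Hence the homotopy class of $f$ is an $m\times n$ matrix $A$ with entries in $F$, and we define the \emph{rank} of $f$ to be $\operatorname{rank} A$. This is natural in the sense that it is invariant under automorphisms of source and target, since these are $\GL_n(F)$ and $\GL_m(F)$ acting on $\pi_0$.

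Then we apply Gaussian elimination. Choose $P\in\GL_m(F)$ and $Q\in\GL_n(F)$ with $PAQ = \begin{pmatrix} I_r & 0\\ 0 & 0\end{pmatrix}$. These matrices give automorphisms of $F(0)^{\oplus m}$ and $F(0)^{\oplus n}$ in $\Dd_W$, so $f$ is equivalent, in the arrow category, to a direct sum
\[
\m{Id}_{F(0)^{\oplus r}} \oplus \big(0: F(0)^{\oplus k}\to F(0)^{\oplus q}\big).
\]
Fibers commute with finite direct sums in a stable $\infty$-category, and they are invariant under equivalence of arrows. We also note that the homotopy class of a map determines the fiber only up to equivalence, which is all the statement claims. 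The fiber of an identity is $0$, and the fiber of a zero map $X\to Y$ is $X\oplus Y[-1]$. Therefore
\[
\Fib(f)\simeq F(0)^{\oplus k}\oplus F(0)[-1]^{\oplus q}.
\]
Undoing the twist and shift gives the stated formula.

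The main obstacle is the identification $\End(F(0)) = F$ in $\pi_0$, specifically for the motivic category over the relevant base. This is where the vanishing of the ``$K_0$ with trivial weight'' beyond $\QQ$ is used, or equivalently the Tate-cellular description of Proposition \ref{KG50}. If one prefers to avoid relying on Proposition \ref{KG50} in the motivic case, one can instead cite $\Hom_{\DM(Z,\QQ)}(\QQ(0),\QQ(0)) = \QQ$ for connected regular $Z$. Everything else is formal.
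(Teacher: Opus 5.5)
Your proof is correct, but it takes a more direct route than the paper's.

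\textbf{The paper's route.} It invokes Iwanari's symmetric monoidal, colimit-preserving functor $\sigma\colon \Dd\Rep\GG_{m,F}\to\Dd_W$, which sends the standard representation to $F(1)$. It asserts that $\sigma$ induces a bijection on $\pi_0$ of the mapping spaces $\Hom\big(F(i)[j]^{\oplus n},F(i)[j]^{\oplus m}\big)$. The rank is then defined via the resulting matrix. Since $f$ lies in the image of $\sigma$ up to homotopy and $\sigma$ is exact, the fiber is computed in $\Dd\Rep_F(\Gm)$, and after untwisting and unshifting, in $\Dd(F)$.

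\textbf{Your route.} You never leave $h\Dd_W$. Once $\pi_0\operatorname{End}(F(0))\cong F$ as a ring (via the $F$-linear structure), $\pi_0$ of the mapping space is $M_{m\times n}(F)$, with composition given by matrix multiplication. Gaussian elimination by automorphisms of source and target then makes $f$ equivalent, in the arrow category, to $\m{Id}\oplus 0$, whose fiber is evident.

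\textbf{Comparison.} Both arguments rest on the same input: the relevant $\pi_0$ is exactly matrices over $F$. The paper takes this for granted, while you justify it. For $W=M$, rely on your second justification, $\Hom_{\DM(Z,\QQ)}(\QQ(0),\QQ(0))=K_0(Z)^{(0)}_\QQ=\QQ$ for connected regular $Z$. The first one, via Proposition~\ref{KG50}, is looser: $\Dd_M$ is not itself Tate-cellular, so that proposition applies only after identifying Homs between Tate objects with Exts of mixed Tate motives.

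Your argument is more elementary and self-contained, since it needs no comparison functor. The paper's has the advantage of placing $f$ and its fiber in the image of $\sigma$. That is the framework reused in paragraphs~\ref{FA14}--\ref{KJ45}, where fibers of maps out of $\fkS_1 C$ are computed in $\Dd\Rep\Gm$.
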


\begin{proof}
By Proposition 5.2 of Iwanari \cite{Iwanari} there's a symmetric monoidal colimit preserving functor
\[
\si:\Dd \Rep \GG_{m, F} \to \Dd_W
\]
from the derived \oo-category of $\Gm$-representations, which sends the standard representation to $F(1)$. The map
\[
\Hom_{\Dd \Rep_F(\Gm)}
\big(F(i)[j]^{\oplus n}, \big(F(i)[j]^{\oplus m} \big) 
\to
\Hom_{\Dd_W}
\big(F(i)[j]^{\oplus n}, \big(F(i)[j]^{\oplus m} \big)  
\]
induced by $\si$, is a map of discrete spaces inducing bijections on $\pi_0$. Since the former is equal to the space of $m \times n$ matrices, we may define the rank of $f$ to be the rank of the associated matrix. Moreover, it follows that, up to homotopy, $f$ is in the image of $\si$ and the fiber may be computed in $\Dd \Rep_F(\Gm)$. Since shifts and twists are auto-equivalences, hence preserve limits, we're also reduced to considering the case $i = j = 0$. The remaining computation in the derived category of $F$ vector spaces is straightforward.
\end{proof}

\sspar{}
\label{FA14}
We return to the second level in the Iwanari tower. Let $C = C^*(X)$ denote the motivic cochain algebra. We have
\[
\xymatrix@R = 1ex
{
\fkS_1 C
\ar@{=}[d] \ar[r]^-{f_1}
 & 
C
 \ar@{=}[d]
\\
\Sym^* \big(\QQ(-1)[-1]^3 \oplus \QQ(-2)[-2]^3   \big)
&
\QQ(0) \oplus \QQ(-1)[-1]^3 \oplus \QQ(-2)[-2]^3.
}
\]
Computing in $\Dd \Rep_\QQ(\Gm)$ we find that $\fkS_1 C$ is a sum of shifted Tate motives of the form $\QQ(i)[i]^{\oplus n_i}$ with $i \le 0$. Since $\Hom_{\DM}(\QQ(i)[i],\QQ(j)[j])$ is connected for $i \neq j$, $f_1$ decomposes up to homotopy as a sum of maps 
\[
f_1^i: \QQ(i)[i]^{\oplus n_i} \to \QQ(i)[i]^{\oplus m_i}. 
\]
Let $r_i$ denote the rank of $f_1^i$ and set $q_i:= m_i - r_i$ and $k_i = n_i - r_i$. The first summand $f_1^{-1}$ is an isomorphism, so that $q_{-1} = k_{-1} = 0$. Applying Proposition \ref{15j1}, we find that 
\[
V_1 \simeq \bigoplus_{i \le -2}
\QQ(i)[i]^{\oplus k_i} \oplus \QQ(i)[i-1]^{\oplus q_i}.
\]
Moreover, we have $n_{-2} = 6$ and $m_{-2} = 3$ so $k_{-2} \ge 3$ (actually $=3$).

\sspar{}
\label{FA15}
We now treat the motivic or filtered $\phi$ settings by considering realization of the motivic Iwanari tower into any Tate-cellular category.

\sspar{}
\label{KH45}
We've seen that the lower portion of the motivic Iwanari tower and the dual tower of presheaves of augmentations in any Tate-cellular realization take the following form:
\[
\begin{small}
\tag{*}
\xymatrix
@ C = 1ex
{
\QQ(0) \oplus \QQ(-1)[-1]^{\oplus 3} \oplus \QQ(-2)[-2]^{\oplus 3} 
& \vdots \ar[d] 
\\
\vdots \ar[u] & \Aa^2 \ar[d]  
\\
\fkS_2 C \ar[u] &  
\Kk (\EE_1, 0)^3 \times \Kk (\EE_2, 1)^3 \ar[r] 
& \Kk (\EE_2, 1)^3 \times \Vv' 
\\
\Sym \big( \QQ(-1)[-1]^{\oplus 3} \oplus \QQ(-2)[-2]^{\oplus 3} \big) \ar[u] 
& \Sym \big( \QQ(-2)[-2]^{\oplus 3} \oplus V' \big) \ar[l]  
}
\end{small}
\]
with $V'$ a sum of shifts of Tate motives $\QQ(-i)[-i]$ with $i \ge 2$, so that $\Vv'$ is connected and simply connected. 

Our goal for this section is to prove the following proposition. 

\begin{ssProposition}
\label{KL57}
We have
\[
\pi_0 \Aa^2 \xto{\sim} \pi_0 \Aa^1 \simeq \EE_1^3.
\]
\end{ssProposition}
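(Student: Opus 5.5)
The plan is to apply the refined criterion, Proposition \ref{MM55}, to the second level of the tower (*) of paragraph \ref{KH45}, i.e.\ to the fiber sequence $\Aa^2 \to \Aa^1 \xto{\psi} \Vv^1$, and to show that the relevant presheaf $Q_1$ vanishes.

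\textbf{Step 1: identify $\Aa^1$ and its homotopy.} By Remark \ref{BB36} and paragraph \ref{FA14}, $\fkS_1 C$ is free on $\QQ(-1)[-1]^{\oplus 3}\oplus\QQ(-2)[-2]^{\oplus 3}$. Hence
\[
\Aa^1 = \Vv_{1,1}^3 \times \Vv_{2,2}^3 \simeq \Kk(\EE_1,0)^3 \times \Kk(\EE_2,1)^3
\]
by Proposition \ref{KG50}. In particular $\pi_0\Aa^1 \simeq \EE_1^3$, which is representable, and at every base point $\pi_1\Aa^1 \simeq \EE_2^3$, while all higher homotopy vanishes. This gives the second isomorphism in the statement.

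\textbf{Step 2: the target $\Vv^1$.} From (*) we have $\Vv^1 \simeq \Kk(\EE_2,1)^3\times\Vv'$, where $\Vv'$ is connected and simply connected.
\begin{itemize}
\item[(i)] Since $\Vv^1$ is connected, $\pi_0\Vv^1=\ast$. So the fiber $L_1$ of $\pi_0\Aa^1\to\pi_0\Vv^1$ is all of $\pi_0\Aa^1\simeq\EE_1^3$, and it is representable.
\item[(ii)] $\pi_1\Vv^1 \simeq \EE_2^3$, which is abelian, so Propositions \ref{BB44} and \ref{MM55} apply.
\end{itemize}
Proposition \ref{MM55} then tells us that $\pi_0\Aa^2 \to L_1 = \pi_0\Aa^1$ is a trivial torsor under
\[
Q_1 = \cok\Big(\pi_1(\Aa^1_L/L,x) \to L\times\pi_1(\Vv^1,0)\Big).
\]
It therefore suffices to show that this map is surjective, i.e.\ that for every $R$ and every base point $y\in\Aa^1(R)$ the map $\pi_1(\Aa^1(R),y)\to\pi_1(\Vv^1(R))$ is onto.

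\textbf{Step 3 (the main point): surjectivity on $\pi_1$.} The map $\psi$ is induced by $V_1\to\fkS_1C$. Its component
\[
\QQ(-2)[-2]^{\oplus 3}\to\Sym\big(\QQ(-1)[-1]^{\oplus 3}\oplus\QQ(-2)[-2]^{\oplus 3}\big)
\]
is, by the computation of paragraph \ref{FA14} together with Proposition \ref{15j1}, the fiber (kernel) part of the weight $-2$ map
\[
f_1^{-2}:\ \QQ(-2)[-2]^{\oplus 6}\to\QQ(-2)[-2]^{\oplus 3}.
\]
Its source consists of three linear generators plus three quadratic terms from $\Lambda^2$ of the weight $-1$ part, and $k_{-2}=3$ is the dimension of its kernel.

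On $\pi_1$ we restrict to loops. A loop in $\Aa^1(R)$ at $y$ is an element $b\in\EE_2^3\otimes R$ in the linear weight $-2$ coordinates. The quadratic part of $\psi$ involves products of weight $-1$ coordinates, which lie in $\pi_0$, and these contribute nothing on loops at a fixed point. So the induced map on $\pi_1$ is the restriction of the kernel-inclusion matrix to the linear coordinates. It is an $R$-linear map $\EE_2^3\otimes R\to\EE_2^3\otimes R$, given by a $3\times 3$ matrix $M$ tensored with $\EE_2$.

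The obstacle is to check that $M$ is invertible, i.e.\ that the kernel of $f_1^{-2}$ projects isomorphically onto the linear summand. Equivalently, the restriction of $f_1^{-2}$ to the quadratic summand $\Lambda^2(\QQ(-1)[-1]^3)$ must be injective, hence an isomorphism onto $\QQ(-2)[-2]^3$. In that case the kernel is a graph over the linear summand, and $M$ is invertible.

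I would verify this by computing in $\Dd\Rep_\QQ(\Gm)$, or directly in Betti cohomology. The cup product
\[
\Lambda^2 H^1(X)\to H^2(X)
\]
for the complement of four general lines, where $H^1$ is generated by $d\log x$, $d\log y$, $d\log(x+y-1)$, is surjective with 3-dimensional source and target, so it is an isomorphism. This is the Orlik--Solomon algebra: there are no relations in degree 2, since the arrangement has no triple points. One also has to ensure that the linear summand of weight $-2$ in $\fkS_1C$ maps by zero or compatibly, so that the fiber really is this graph.

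\textbf{Conclusion.} With $M$ invertible, $\pi_1$ surjects, so $Q_1=0$. Proposition \ref{MM55} then makes $\pi_0\Aa^2\to\pi_0\Aa^1$ a trivial torsor under the zero group, i.e.\ an isomorphism.
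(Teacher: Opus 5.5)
Your proposal is correct and follows essentially the paper's argument: you reduce via Proposition \ref{MM55} to showing $Q_1=0$, identify the $I$-summand of $V_1$ as a graph over the linear summand using that the cup product $\Sym^2 H\to I$ is an isomorphism (checked in Betti cohomology via the single relation $abc=0$), and observe that the quadratic part contributes nothing on $\pi_1$, which the paper phrases as $e\otimes e\sim 0$ because $\Hom_\Dd\big(F(0),F(2)[2]\big)$ is connected. Your closing worry about how the linear weight $-2$ summand maps is unnecessary: once $\sigma$ is an isomorphism, the kernel of $f_1^{-2}$ is a graph over the linear summand whatever that component is (and by construction it is the identity).
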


The proof spans paragraphs \ref{KI44} -- \ref{KJ48}.

\sspar{}
\label{KI44}
We repeat the construction of $Q=Q_1$ in this concrete context. By paragraph \ref{KH45}, we have a group action in presheaves of sets
\[
\xymatrix{
\EE_2^3 
\ar@{}[r]|-{\rotatebox{-90}{\scalebox{2}{$\circlearrowright$}}}
&
\pi_0\Aa^2 \ar@{->>}[d]^-{\pi_0\phi}
\\
& \EE_1^3.
}
\]
Given $x: \ast \to \Aa^2(R)$ for some $F$-algebra $R$, the stabilizer of the connected component $[x]$ of $x$ in $\Aa^2(R)$ may be identified with the image of the map of abelian groups 
\[
\tag{*}
\begin{small}
\xymatrix
@R = 1ex @C = 2ex
{
\pi_1 \big( \Kk (R \otimes_F E_1, 0)^3 \times 
\Kk (R \otimes_F E_2, 1)^3, \phi x \big)
\ar[r] \ar@{=}[d]
&
\pi_1 \big( \Kk ( R\otimes_F E_2, 1)^3 \times \Vv'(R), 0 \big)
\ar@{=}[d]
\\
H_1 \big( [\phi x], \ZZ \big)
\ar[r]
&
(R \otimes_F E_2)^3.
}
\end{small}
\]
In particular, the map (*) depends only on the choice of a connected component
\[
\xi = [\phi x] \in \pi_0
\big( \Kk (R \otimes_F E_1, 0)^3 
\times \Kk (R \otimes_F E_2, 1)^3\big)
= R \otimes_F E^3_1. 
\]
Varying $\xi$, we obtain a commutative diagram
\[
\xymatrix{
R \otimes_F E_1^3 \times R \otimes_F E_2^3
\ar[rr] \ar[dr]
&&
R \otimes_F E_1^3 \times R \otimes_F E_2^3
\ar[dl]
\\
& 
R \otimes_F E_1^3 
}
\]
which is natural in $R$ and additive in fibers, hence a map of vector groups over $\EE_1^3$ ($\psi$ below)
\[
\xymatrix{
\EE^3_1 \times \EE_2^3
\ar[r]^-\psi
&
\EE^3_1 \times \EE_2^3 
\ar[r]
&
Q
\ar@{}[r]|-{\rotatebox{-90}{\scalebox{2}{$\circlearrowright$}}}
&
\pi_0\Aa^2 \ar@{->>}[d]^-{\pi_0\phi}
\\
&&& \EE_1^3
}
\]
whose presheaf cokernel $Q$ acts freely and transitively on the fibers of $\pi_0 \phi$.

\sspar{}
\label{KJ44}
We claim $Q = 0$. For this, it's enough to fix an arbitrary $F$-point $e \in \EE_1^3(F) = E_1^3$ and to show that the fiber of $\psi_e$ over $e$ has full rank.

\begin{ssRemark}
A priori, one may expect the rank of $\psi_e$ to depend on the higher coherences encoded in $C^*(X)$. We will argue, however, that the rank of 
\[
\psi_e : E_2^3 \to E_2^3
\]
depends only on the multiplication map
\[
C^*(X) \otimes C^*(X) \to C^*(X),
\]
a morphism in $\Dd \Rep \Gm$. 
\end{ssRemark}

\sspar{}
\label{KJ45}
It will be convenient to abbreviate
\[
H := \QQ(-1)[-1]^{\oplus 3}, \quad 
I := \QQ(-2)[-2]^{\oplus 3},
\]
while noting that there's an equivalence
\[
\tag{$\sigma$}
I \overset \si \simeq \Sym^2 H
\]
of 3-dimensional vector spaces placed in weight $4$ and cohomological degree $2$ in $\Dd \Rep \Gm$. With this notation, the motivic cochain algebra (absent its algebra structure) is given by
\[
C = C^*(X) = \QQ(0) \oplus H \oplus I;
\]
the linear Iwanari truncation is given by
\[
\fkS_1 = \Sym \big( H \oplus I \big).
\]
We denote the fiber of $\fkS_1 \to C$ by $V$. As noted above, the latter may be computed in $\Dd \Rep \Gm$. We've fixed an augmentation 
\[
e: \Sym H \to \one
\]
from which we obtain a map of commutative algebras
\[
\Sym(H \oplus I) \to \Sym I.
\]
The object $V \in \Dd$ contains $I$ as a canonical direct summand. Composing, we obtain a map
\[
\tag{*}
\Sym I \from \Sym (H \oplus I) \from \Sym V \from \Sym I. 
\]
Recalling that $\Hom_\Dd \big( I , F(0) \big)$ is connected with abelian fundamental group, our task is to show that the induced map of $F$-\vectorspaces
\[
\pi_1 \Hom_\Dd \big( I , F(0) \big)
\to
\pi_1 \Hom_\Dd \big( I , F(0) \big)
\] 
has full rank. 

\sspar{}
\label{KJ47}
We claim that 
the map of symmetric algebras \ref{KJ45}* sends $I \subset \Sym I$ into $I \subset \Sym I$ up to homotopy, and that the resulting map $I \to I$ is an isomorphism. 

\sspar{}
\label{KJ48}
The symmetric algebra $\Sym (H \oplus I)$ evidently contains canonical copies of $I$ and $\Sym^2 H$ as summands. Moreover, we've seen that the map
\[
\fkS_1 = \Sym(H \oplus I) \to C = \QQ(0) \oplus H \oplus I
\]
decomposes into a direct sum of maps: one direct summand is the evident map $I \to I$ (the identity); another is a map 
\[
 \Sym^2 H \xto{\si} I
\]
induced by the multiplication map of $C$. This map may be seen to be an equivalence, for instance, by considering the Betti realization, and the known identification of the cohomology algebra of $X(\CC)$ as the free graded-commutative algebra on generators $a,b,c$ in cohomological degree $1$ modulo the one relation $abc = 0$. 

This allows us to pin down the factor $I$ occurring in $V$ as the image of the map
\[
I \xto{-\si\inv \oplus id} (\Sym^2 H) \oplus I.
\]

We claim that up to homotopy, the map
\[
\Sym^2 H \subset \Sym( H \oplus I) \to \Sym I
\]
is equivalent to 0. Indeed, this map is induced by the tensor product of $e$ with itself :
\[
e \otimes e : \QQ(-2)[-2]^{\oplus 9} \to \QQ(0).
\] 
The latter belongs to a cartesian power of the spaces 
\[
\Hom_\Dd  \big( F(0), F(2)[2] \big)
\]
which are connected. Thus
\[
e \otimes e \sim 0.
\]

It follows that the composition
\[
I \to \Sym^2 H \oplus I  \subset \Sym(H \oplus I) \to \Sym I
\]
lands in $I$ and induces the identity map on $I$. This establishes the claim made in paragraph \ref{KJ47}, hence the claim made in paragraph \ref{KJ44}, and hence Proposition \ref{KL57}. \qed

\section{Compatibility with Chabauty-Kim theory}
\label{Sec:Comp}

\subsection{Review of cdga's, Hirsch extensions and minimal models}
\label{Sec:Anim}
We review generalities on cdga's. These will be used in section \ref{Sec:CompFil} to compare our ``coherent'' filtrations with the concrete models considered classically.

\subsubsection{Setup}
\label{LA22}
We fix $F$ a field of characteristic $0$. Let $\Cpx(F)$ denote the category of (unbounded) complexes of vector spaces (of arbitrary dimension). Let $\cdga(F) = \CAlg \Cpx(F)$, the category of \emph{cdga's}. In both categories $\Cpx(F)$ and $\cdga(F)$, we define a morphism to be a \emph{weak equivalence} if it's a quasi-isomorphism and a \emph{fibration} if it's surjective; see Olsson \cite{OlssonBar} and the references there. With these definitions, $\Cpx(F)$ is a symmetric monoidal model category, and the symmetric algebra functor and the forgetful functor form a Quillen adjunction with $\Sym$ left adjoint. The underlying (symmetric monoidal) \oo-categories are equivalent to the derived \oo-category $\Dd(F)$ and to its category of commutative algebras $\Cc(F) = \CAlg \Dd(F)$. Our assumption that $F$ be of characteristic zero guarantees $\Sym$ takes quasi-isomorphisms to quasi-isomorphisms, hence the square of \oo-categories
\[
\xymatrix{
\Cpx(F) \ar[r]^-{\Sym} \ar[d] & \cdga(F) \ar[d]
\\
\Dd(F) \ar[r]_{\Sym} & \Cc(F)
}
\]
commutes up to homotopy. 

\subsubsection{Free cdga's}
\label{LA23}
Let $E \in \Cpx(F)$ be a complex. The symmetric algebra $\Sym E \in \cdga(F)$ is determined concretely by the following properties: the underlying graded-commutative algebra is the free graded-commutative algebra
\[
\Sym E = \Sym(E^\m{even}) \otimes 
\La(E^\m{odd});
\]
the differential is determined from the given differential on $E$ by the Leibniz rule. The free cdga $\Sym E$ comes with a canonical augmentation $\Sym E \to F$. 

\subsubsection{The cdg subalgebra generated by a complex}
\label{LX66}
Suppose given an inclusion $A \subset B$ of cdga's, and a subcomplex $E \subset B$. Then the graded subalgebra $A\langle E \rangle$ generated by the graded vector subspace $A + E + dE \subset B$ is a cdg subalgebra. 

\subsubsection{Hirsch extensions}
\label{LA24}
Now suppose given a complex $E \in \Cpx(F)$, a cdga $C \in \cdga(F)$ and a morphism of complexes 
\[
\al: E \to C. 
\]
The \emph{Hirsch extension $C \langle \int E \rangle$ of $C$ by $E$} is a concrete model for the homotopy pushout
\[
\xymatrix{
\Sym E \ar[r] \ar[d] & C \ar[d] 
\\
F \ar[r] & C \langle \int E \rangle
}
\]
of the induced map $\Sym E \to C$ in $\cdga(F)$ by the canonical augmentation $\Sym E \to F$, which may be constructed as follows. After possibly replacing $E$ by a quasi-isomorphic complex, we may assume $d=0$. Let $D(E)$ be the graded vector space $E \oplus E[1]$ with differential 
\[
E^{i-1} \oplus E^{i} \to E^{i} \oplus E^{i+1}
\]
given by the second projection. Then the evident morphisms
\[
E \to D(E) \to 0
\]
factor the zero map into a cofibration followed by a trivial fibration, and so 
\[
\Sym E \to \Sym D(E) \to F
\]
factors the canonical augmentation as a cofibration followed by a trivial fibration. The homotopy pushout is thus computed by the relative tensor product 
\begin{align*}
C \langle \int E \rangle 
& \simeq C \otimes_{\Sym E} \Sym D(E)
\\
& \simeq
\coeq \Big(C \otimes_F \Sym E \otimes_F \Sym D(E)
\rightrightarrows
C \otimes_F \Sym D(E) \Big)
.
\end{align*}
In turn, this is given by the graded algebra
\[
C \langle \int E \rangle 
\simeq
C \otimes_F \Sym \big( E[1] \big)
\]
equipped with the unique differential which extends the given one on $C$ and sends $x \in E$ to $dx := \al(x)$. 

\subsubsection{Minimal models}
\label{LA25}
A cdga $A$ is \emph{connected} if $A^0 = F$ is the ground field and $A^i = 0$ for $i<0$. When this is the case, the underlying complex decomposes under the unit map as
\[
A \simeq F \oplus \tilde A,
\]
and we define the \emph{indecomposables of $A$} by
\[
I(A) = \frac {\tilde A}  {\tilde A \cdot \tilde A}.
\]
The cdga $A$ is \emph{minimal} if it's connected, and if it may be written as a nested union $A = \bigcup A_i$,
\[
F = A_0 \subset A_1 \subset A_2 \subset \cdots
\]
with $A_i \subset A_{i+1}$ a Hirsch extension by a complex concentrated in one degree, and with induced differential
\[
d: I(A) \to I(A)
\]
equal to zero (see e.g. \cite[Definition 5.2]{Morgan}). 

A cdga $A$ is \emph{cohomologically connected} if $H^iA = 0$ for $i<0$ and $H^0A = F$. A morphism of cdga's is a \emph{weak equivalence} (or simply an ``equivalence'') if the morphism of underlying complexes is a quasi-isomorphism. By Sullivan \cite{Sullivan} (or see Bousfield--Guggenheim \cite{BousfieldGug}), every cohomologically connected cdga $A$ admits a \emph{minimal model}, i.e. an equivalence of cdga's $M \to A$ with $M$ minimal.

\subsubsection{Notation for complexes}
\label{NC55}
If $E$ is a cochain complex of $F$-\vectorspaces, we occasionally regard $E$ as a graded vector space with differential $d: E \to E$ of graded degree $1$. We then denote $ZE = \ker d$ and $BE = \Im d$, both regarded as graded vector subspaces of $E$. With this notation, $Z^iE$ denotes the $i$th graded piece of $ZE$, and similarly for $B^iE$. We also define
\[
\gr^iE := E^i, 
\]
the $i$th graded piece of $E$. (This redundant notation helps to avoid a buildup of indexes when working with filtered complexes.)

\subsection{From augmentations to $\pi_1$-torsors (in families)}
\label{sSec:tors}

\sspar{}\label{DA22}
We work over an open integer scheme $Z \subset 
\Spec \Oo_K$. We fix a closed point $\pP \in Z$. Let $Z_\pP = \Spec \Oo_\pP$ be the complete local scheme at $\pP$ and let $K_\pP$ be the complete local field at $\pP$, a finite extension of $\Qp$. We let $\MFphi$ denote the $\Qp$-Tannakian category of admissible filtered $\phi$ modules over $K_\pP$. In paragraphs \ref{DA33} -- \ref{DA39} we let
\[
\Dd_M := \Dd_M(Z_\pP) = \DM^\m{il}(Z_\pP, \QQ)
\]
denote the symmetric monoidal \oo-category of Ind-lisse motives, and we fix a morphism 
\[
x: C \to \one
\]
in $\Cc_M(Z_\pP):= \CAlg \Dd_M(Z_\pP)$ (equivalently, an object of $\Cc_M^\m{aug} := \CAlg^\m{aug}\Dd_M \simeq \Alg^\m{aug}\CAlg \Dd_M$).

\sspar{}
\label{DA33}
There's an exact monoidal realization functor on the level of homotopy categories
\footnote{It appears that an \oo-categorically enhanced filtered $\phi$ realization functor \textit{can} be extracted from the literature. This helps to fill in the landscape with a few more coherently commuting squares. However, this isn't strictly needed for Theorem \ref{IN13}, since in establishing the commutativity of the diagram there, we may pass to homotopy categories immediately after applying Koszul duality to obtain cotorsors under the bar construction.}
\[
\Re_{F\phi}: h\Dd_M \to h\Dd_{F\phi}
\]
where $\Dd_{F\phi} = \Dd(\Ind \MFphi)$. We denote filtered $\phi$ realization also by adding `$F\phi$' as a decoration (superscript or subscript). We let 
\[
{_xB_x} = {_xB_x}(C) = \mathfrak{D}_\m{Ksz}(C, x),
\]
the Koszul dual coalgebra \cite[\S 5.2.2]{LurieHA}. Let $\om: \Ind \MFphi \to \Vect \Qp$ be a $\Qp$-rational fiber functor. We assume $\Re_{\Fphi}(C)$ is cohomologically connected and we let
\[
\tag{*}
\pi(C,x)^{F\phi} = \Spec \om H^0 \Re_{F\phi} ({_xB_x} ),
\]
a proalgebraic (in fact prounipotent) $\Qp$-group with an action of the Tannakian Galois group $G_\om(\MFphi)$.

\sspar{}\label{DA34}
We define $\Aug(C)$ to be the presheaf of sets
\[
\Aug(C)(R) = \pi_0 \Hom_{\Cc}(C, \one \otimes R),
\]
pointed by our fixed augmentation $x$. If $R$ is a $\Qp$-algebra, then, by definition, 
\[
\Aug(C)_\Qp(R) = \Aug(C)(R).
\]
We define the presheaf of pointed sets
\[
\mathbf{H}^1\big(G_\om(\MFphi), \pi(C,x)^{F\phi}\big)
:
\m{Aff}^\m{op}_\Qp \to \Set_*
\]
by
\[
\mathbf{H}^1\big(G_\om(\MFphi), \pi(C,x)^{F\phi}\big)(R) = 
H^1\big(G_\om(\MFphi)_R, \pi(C,x)^{F\phi}_R\big).
\]
Our goal in this section is to construct a map of presheaves
\[
\tag{$\ast_\om$}
\Aug(C)_{\Qp} \to \mathbf{H}^1\big(G_\om(\MFphi), \pi(C,x)^{F\phi}\big).
\]
We begin in paragraphs \ref{DA35}--\ref{DA36} with an outline of the construction.

\sspar{}\label{DA35}
We fix $R$ and we claim that all of the constructions that follow may be made suitably natural in $R$. Fix $\xi : C \to \one \otimes R$. Let
\[
\Dd^M_R:= \Mod_{\one \otimes R}\Dd_M,
\quad 
\Cc^M_R := \CAlg \Dd^M_R \simeq \Cc_{\one \otimes R/}, 
\quad
\Cc_R^\m{M,aug} := \CAlg^\m{aug} \Dd^M_R.
\]
Similarly, let
\[
\Dd^{F\phi}_R:= \Mod_{\one \otimes R}\Dd^{F\phi},
\quad 
\Cc^{F\phi}_R := \CAlg \Dd^{F\phi}_R \simeq \Cc^{F\phi}_{\one \otimes R/}, 
\quad
\Cc_R^{F\phi,\m{aug}} := \CAlg^\m{aug} \Dd^{F\phi}_R,
\]
\[
\Ind \opnm{MF}^\phi_R := \Mod_{\one \otimes R}^\heartsuit (\Ind\MFphi).
\]
(The $\heartsuit$ is there only to emphasize that we consider modules in the symmetric monoidal \textit{abelian} category $\Ind \MFphi$, and \textit{not} its derived category.) There's an equivalence of symmetric monoidal $R$-linear categories
\[
\Ind\opnm{MF}^\phi_R 
\simeq 
\Ind\Rep_R \big(G_\om(\MFphi)_R\big)
\]
which commutes with the evident fiber functors to $\Mod^\heartsuit(R)$.

\sspar{}\label{DX33}
We have an equivalence in $\coAlg(\Cc_R)$:
\[
\tag{*}
{_{x \otimes R} B_{x \otimes R}}(C \otimes R) \simeq 
{_x B_x} \otimes R.
\]
Moreover, the operations $\Re_{F\phi}$, $H^0$ are both compatible with base-change, so that we obtain an equivalence 
\[
H^0 \Re_{F\phi} ({_x B_x} \otimes R)
\simeq
H^0 \Re_{F\phi} ({_x B_x}) \otimes R
\]
in $\coAlg \CAlg \Ind \opnm{MF}^\phi_R$

\sspar{}\label{DA36}
The $(\Spec R)$-family of augmentations $\xi$ gives rise to an augmentation
\[
\xi:C \otimes R \to \one \otimes R
\]
in $\Cc_R^M$. Further, the data $(C\otimes R, x \otimes R, \xi)$ gives rise to an object of $\LMod^\m{aug}(\Cc_R^M)$ (We view $\one \otimes R$ as $C\otimes R$-module via $\xi$). Applying Koszul duality for left modules \cite{LMKoszul, brantner2025pd}, we obtain a torsoric ${_x B_x} \otimes R$-comodule
\[
\tag{*}
{_{x\otimes R} B_\xi} = {_{x\otimes R} B_\xi}(C \otimes R) := \Dd_\m{Ksz}
(C\otimes R, x \otimes R, \xi)
\]
in $\Cc_R^M$. Let ${_xB_x^{F\phi} } = \Re_{F\phi} ({_xB_x})$. Applying \textit{filtered $\phi$}-realization to (*), we obtain a ${_x B^{F\phi}_x}\otimes R$-cotorsor ${_{x\otimes R}B_\xi^{F\phi}}$ in the homotopy category $h (\Cc^{F\phi}_R)$. Applying truncation into the heart, we obtain an $H^0({_x B^{F\phi}_x}\otimes R)$-cotorsor $H^0({_{x\otimes R}B_\xi^{F\phi}})$ in $\CAlg \Ind  \opnm{MF}^\phi_R$
\footnote{Following Deligne's ``algebraic geometry in a Tannakian category'', we refer to such an object as a \emph{$\Spec H^0({_x B^{F\phi}_x}\otimes R)$-torsor in $\opnm{MF}^\phi_R$}. See Definition \ref{DA40}.}
, hence an element of $H^1\big(G_\om(\MFphi)_R, \pi(C,x)^{F\phi}_R\big)$. See below for the base-change $\opnm{MF}^\phi_R$.

\medskip

We now revisit aspects of the construction that require explanation. 

\begin{ssLemma}
\label{DA37}
If $R$ is a ring and $G$ is a group-scheme over $R$, we denote by $\Rep_R(G)$ the category of representations of $G$ in (arbitrary) $R$-modules. Let $F$ be a ring, let $G$ be an affine group scheme over $F$, and let $R$ be an $F$-algebra. Then there's an equivalence of categories 
\[
\Mod_{\one \otimes R}(\Rep_F G) \simeq \Rep_R(G_R)
\] 
commuting with the evident forgetful functors to the category $\Mod_R$ of $R$-modules. 
\end{ssLemma}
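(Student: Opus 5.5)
The plan is to use the standard dictionary between representations of $G$ over $F$ and $\Oo(G)$-comodules. Set $A = \Oo(G)$, so that $\Rep_F G$ is the category of (arbitrary) $A$-comodules in $F$-modules. The unit object is $F$ with trivial coaction, so $\one \otimes R$ is $R$ with trivial coaction, a commutative algebra object in $\Rep_F G$. Similarly, $\Rep_R(G_R)$ is the category of $R$-modules $M$ with an $A_R = A \otimes_F R$-comodule structure $M \to M \otimes_R A_R = M \otimes_F A$. In this language both categories are described by the same underlying data: an $R$-module $M$ together with a map $\rho: M \to M \otimes_F A$ satisfying compatibility conditions. It remains to match those conditions.

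\textbf{Step 1 (from $\Mod_{\one\otimes R}(\Rep_F G)$ to $\Rep_R(G_R)$).} An object on the left is an $A$-comodule $M$ with an action map $R \otimes_F M \to M$ that is a map of comodules, where $R$ carries the trivial coaction. Comodule-linearity of the action says precisely that
\[
\rho(rm) = (r \otimes 1)\rho(m).
\]
That is, $\rho$ is $R$-linear when $M \otimes_F A$ is given the $R$-module structure through the first factor. Identifying $M \otimes_F A \cong M \otimes_R A_R$, we get an $R$-linear map $M \to M \otimes_R A_R$. Coassociativity and counitality over $A_R$ follow from those over $A$, via the identification
\[
M \otimes_R A_R \otimes_R A_R \cong M \otimes_F A \otimes_F A.
\]

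\textbf{Step 2 (converse and morphisms).} Given an $R$-linear $A_R$-comodule, restricting the coaction along $M \otimes_R A_R \cong M \otimes_F A$ gives an $A$-comodule. Reversing Step 1, the $R$-action $R \otimes_F M \to M$ is then comodule-linear. On morphisms, $R$-linear comodule maps coincide on both sides, so the two constructions are mutually inverse and functorial. Both constructions leave the underlying $R$-module unchanged, so the equivalence commutes with the forgetful functors to $\Mod_R$ on the nose.

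\textbf{Main obstacle.} The only delicate point is generality: $F$ is an arbitrary ring, not a field, so $\Rep_F G$ should be read as $A$-comodules, and the isomorphism $M \otimes_R (R \otimes_F A) \cong M \otimes_F A$ must hold without flatness hypotheses. It does, by associativity of tensor products. If the paper intends $G$ flat over $F$, so that comodules form an abelian category, this is harmless, and no further input is needed.
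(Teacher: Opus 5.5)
Your proposal is correct and follows essentially the same route as the paper: the paper likewise works with $A=\mathcal{O}(G)$-comodules and transports the coaction along $(A\otimes_F R)\otimes_R V \simeq A\otimes_F V$. It likewise checks that $G$-equivariance of the $R$-action amounts to $R$-linearity of the coaction, that coassociativity over $A$ and over $A\otimes_F R$ correspond, and that morphisms are unchanged, so the two constructions are quasi-inverse over $\Mod_R$; you additionally track counitality, which the paper leaves implicit.
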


\begin{proof}
This is all just a sequence of diagram chases; we nevertheless include the details. The object $\one \otimes R \in \Rep_k G$ is given by $R$ with trivial $G$-action, and we'll denote it simply by $R$. We'll construct functors
\[
\Mod_{R}(\Rep_F G) 
\underset{\Psi}{\overset{\Phi}{\leftrightarrows}} 
\Rep_R(G_R).
\]

Let $A:= \Oo(G)$. An $R$-module in $\Rep_F G$ consists of an $F$-module $V$ plus maps of $F$-modules $A \otimes_F V \xfrom{\rho_V} V$, and $R \otimes_F V \xto{\al} V$ such that the evident squares
\[
\xymatrix{
&
				& A \otimes_k R \otimes_k V
				\ar[r]^-{\m{id}_A \otimes \al}
				& A \otimes_F V
\\
A \otimes_F V \ar[d]
& V \ar[l]_-{\rho_V} \ar[d] 
				& R\otimes_F V 
				\ar[r]^-\al 
				\ar[u]^-{\rho_{R\otimes_k V}} 
				& V \ar[u]_-{\rho_V}
\\
A\otimes_F A\otimes_F V 
& A\otimes_F V \ar[l]
				& R \otimes_F R \otimes_F V \ar[r] \ar[u] 
				& R \otimes_F V \ar[u]
}
\]
expressing associativity of the coaction of $A$ on $V$ (left), associativity of the action of $R$ on $V$ (below, right), and equivariance of the action of $R$ on $V$ (above, right), commute. Since the coaction of $A$ on $R$ is given by $\rho_R = 1_A \otimes \m{id}_R$, the coaction on $R\otimes_F V$ is given by
\[
\rho_{R\otimes_F V} = \m{id}_R \otimes \rho_V.
\]

We define the underlying $F$-module of $\Psi(V)$ to be $V$. The map $\al$ endows $V$ with the structure of an $R$-module and enables us to regard $\rho_V$ as a map
\[
\tag{*}
(A \otimes_F R) \otimes_R V 
\simeq A \otimes_F V \xfrom{\rho_V} V.
\]
We claim (*) defines a representation of $G_R$ over $R$. Suppose given $r \in R$, $v \in V$. Then
\begin{align*}
\rho_V(rv) &= \rho_V \big( \al(r \otimes v) \big)
\\
&= (\m{id}_A \otimes \al)\big((\m{id}_R \otimes \rho_V)(r\otimes v) \big)
\\
&= (\m{id}_A \otimes \al) \big( r \otimes \rho_V(v) \big)
\\
&= r \rho_V(v),
\end{align*}
which shows that (*) is $R$-linear. The associativity of $\rho_V$ over $R$ follows from the associativity of $\rho_V$ over $F$. 

Thus, $V$ together with the coaction (*) defines an object $\Psi(V) \in \Rep_R (G_R)$. As for morphisms, if $\phi: V \to W$ is a morphism of $R$-modules in $G$-representations, then $\phi$ itself (i.e. regarded as a map of $F$-modules) is $R$-linear and compatible with the coactions by $A \otimes_F R$, hence a morphism of $G_R$-representations $\Psi(V) \to \Psi(W)$ over $R$.

We turn to the construction of $\Phi$. Suppose given a representation $(A \otimes_F R) \otimes_R V \xfrom{\rho_V} V$ of $G_R$ on an $R$-module $V$. Composing again with the isomorphism $A \otimes_F V \simeq (A \otimes_F R) \otimes_R V$, we obtain a $F$-linear map
\[
A \otimes_F V \xfrom{\rho_V^\Phi} V,
\]
while the structure of $R$-module includes the data of a $F$-linear map $\al: R\otimes_F V \to V$. We claim the data $(V, \rho_V^\Phi, \al)$ defines an $R$-module in $G$-representations over $F$. 

The associativity of the $R$-action defined by $\al$ is given. The $R$-linearity of the coaction $\rho_V$ is equivalent to the $G$-equivariance of $\al$. Denoting the comultiplication of $A$ by $\mu$ and the induced comultiplication of $A \otimes_F R$ by $\mu_R$, the commutativity of the squares
\[
\xymatrix{
(A \otimes_F R) \otimes_R V
\ar@{=}[d] \ar[r]^-{\mu_R \otimes \m{id}_V}
&
(A\otimes_F R)\otimes_R(A\otimes_FR)\otimes_R V
\ar@{=}[d]
&
(A \otimes_F R) \otimes_R V
\ar@{=}[d] \ar[l]_-{\m{id}_{A\otimes_F R} \otimes \rho_V}
\\
A \otimes_F V
\ar[r]_-{\mu \otimes \m{id}_V}
&
A \otimes_F A\otimes_F V
&
A \otimes_F V
\ar[l]^-{\m{id}_A \otimes \rho_V^\Phi}
}
\]
shows that the associativity of the coaction of $A\otimes_F R$ on $V$ via $\rho_V$ is equivalent to that of $A$ on $V$ via $\rho_V^\Phi$.

Evidently, $\Phi$ and $\Psi$ are quasi-inverse.
\end{proof}

\sspar{}\label{IA55}
We turn to the compatibility of Koszul duality with base change. We first clarify our formalism of base-change (\ref{DX33}*). Let $F$ be a ring, let $\Dd(F) \in \CAlg (\m{Pr}^L_\m{st})$ be the unbounded derived \oo-category of complexes of $F$-modules, and let
\[
\Dd \in \CAlg \Mod_{\Dd(F)} \m{Pr}^L_\m{st}
\simeq
\CAlg (\m{Pr}^L_\m{st})_{\Dd(F)/}
\]
be a presentably symmetric monoidal $F$-linear stable $\infty$-category. Then evidently $\Dd$ comes equipped with a symmetric monoidal functor 
\[
\Dd(F) \to \Dd,
\]
hence a functor 
\[
\CAlg \Dd(F) \to \Cc := \CAlg \Dd
\]
\[
R \mapsto R \otimes \one.
\]
Since the real numbers are here nowhere in sight, we allow ourselves to abbreviate $\RR:= R \otimes \one$. As in paragraph \ref{DA35}, we set
\[
\Dd_R := \Mod_R(\Dd)
\qandq
\Cc_R := \CAlg \Dd_R \simeq \Cc_{\RR/}.
\]
Tensor product with $R$ defines a symmetric monoidal functor 
\[
\Dd \to \Dd_R
\]
hence a functor
\[
\Cc \to \Cc_R
\]
\[
\AA \mapsto R \otimes \AA = \RR \otimes \AA.
\]

\sspar{}\label{IA56}
For any monoidal category $\Ee$ we denote $\Ee_\bullet = \Ee_{/\one}$.\footnote{With this notation it is hopefully clear that $\Cc_\bullet\op = (\Cc_{/\one})\op$ and \emph{not} $(\Cc\op)_{/\one}$.} Then the functoriality of twisted arrow categories (together with the equivalence $\Alg(\Cc_\bullet) \simeq \Cc_\bullet$) gives rise to a coherently commuting square of \oo-categories
\[
\xymatrix{
f \ar@{|->}[r] & f_R
\\
\Alg \Tw (\Cc_\bullet) \ar[d] \ar[r] &
\Alg \Tw (\Cc_{R,\bullet}) \ar[d] 
\\
\Cc_\bullet \times \Alg(\Cc_\bullet\op) \ar[r] &
\Cc_{R,\bullet} \times \Alg(\Cc_{R, \bullet}\op)
\\
(A,B) \ar@{|->}[r] & (R \otimes A, R\otimes B)
}
\]
Let $C \in \Cc_\bullet$ be an augmented commutative algebra and let $f \in \Alg \Tw(\Cc_\bullet)$ be a left universal object lying over $\big(C, \DKsz(C)\big) \in \Cc_\bullet \times \Alg(\Cc_\bullet\op)$. Then $f_R$ corresponds to a twisted homomorphism 
\[
R \otimes C \to R \otimes \DKsz(C).
\]
By the universal mapping property of $\DKsz(R \otimes C)$, $f_R$ factors essentially uniquely through a map of coalgebras 
\[
\tilde f_R: \DKsz(R\otimes C) \to R \otimes \DKsz(C)
\]
in $\Cc_{R,\bullet}$.

\sspar{}\label{IA57}
After forgetting the coalgebra structures as well as the commutative algebra structures, $\tilde f_R$ is a map of colimits (we write $C_R := R \otimes C$ and we denote $n$-fold tensor power over $R$ by $\otimes_R^n$)
\[
\colim_{n \in \Delta\op} (C_R)^{\otimes_R ^n}
\to
R \otimes \colim_{n \in \Delta\op} C^{\otimes n}
\simeq
\colim_{n \in \Delta\op} R \otimes  C^{\otimes n}
\]
which comes from a map of diagrams
\[
\Big\{
(C_R)^{\otimes_R ^n} 
\xto{f_R^n}
R \otimes C^{\otimes n}
\Big\}_{n \in \Delta\op}
\]
in which $f_R^n$ witnesses the essentially canonical equivalence
\[
(C_R)^{\otimes_R ^n} 
\simeq
R \otimes C^{\otimes n}.
\]
This completes our justification of \ref{DX33}(*). 

\medskip

Lemma \ref{DA37} allows us to drop the choice of fiber functor from our formulation of \ref{DX33}(*), as we now explain.

\begin{ssDefinition}
\label{DA40}
Let $T$ be a Tannakian category over a field $F$. If $R$ is an $F$-algebra, we set
\[
T_R = \Mod_{R \otimes \one}(\Ind T).
\]
Now let $\pi = \Spec A$ be a prounipotent group object in $T$. Then we have the prounipotent group object $\pi_R := \Spec R \otimes A$ of $T_R$ and we define $H^1(T_R, \pi_R)$ to be the set of isomorphism classes of $\pi_R$-torsors in $T_R$. Finally, we define the presheaf of pointed sets
\[
\mathbf{H}^1(T, \pi): \Aff_F\op \to \Set_\ast
\]
by $\mathbf{H}^1(T, \pi)(R) = H^1(T_R, \pi_R)$.
\end{ssDefinition}

\begin{ssRemark}
\label{DA38}
In view of Lemma \ref{DA37}, the map \ref{DA34}($\ast_\om$) corresponds to a map
\[
\tag{*}
\Aug(C)_\Qp \to 
\mathbf{H}^1\big(\MFphi, \pi(C,x)^{F\phi} \big).
\]
\end{ssRemark}

\subsubsection{Variant}
\label{DA39}
Fix an algebraic closure $\bar K_\pP$ of $K_\pP$ with associated total Galois group $G_{K_\pP}$, let $\Rep^f G_{K_\pP}$ denote the $\Qp$-Tannakian category of crystalline representations of $G_{K_\pP}$ over $\Qp$, and let
\[
\Dd_\et(Z_\pP) := \Ind \Dd^b \Rep^f G_{K_\pP}.
\]
We have the presheaf of pointed sets
\[
\mathbf{H}^1_f \big( 
G_{K_\pP}, \pi(C,x)^\et
\big):=
\mathbf{H}^1_f \big( 
\Rep^f G_{K_\pP}, \pi(C,x)^\et
\big):
\Aff_{\Qp}\op \to \Set_*.
\]
We replace realization into filtered $\phi$ modules by realization
\[
h\Dd_M \to h \Dd_\et(Z_\pP)
\]
into crystalline Galois representations. The triangulated $p$-adic Hodge theory of Deglise-Niziol \cite{DegliseNiziol} provides a commuting diagram of presheaves of pointed sets
\[
\xymatrix{
\Aug(C)_\Qp \ar[d] \ar[dr] \\
\mathbf{H}^1_f \big( 
G_{K_\pP}, \pi(C,x)^\et
\big)
\ar[r]
&
\mathbf{H}^1\big(\MFphi,  \pi(C,x)^{F\phi} \big).
}
\]

\subsubsection{Variant}
\label{DA41}
Instead of working with the category $\Dd_M(Z_\pP)$ of Ind-lisse motives over the trait $Z_\pP$, we work with the category $\Dd_M(Z)$ of Ind-lisse motives over our open integer scheme $Z$. We also fix an algebraic closure $\bar K$ of $K$ as well as an embedding of $\bar K$ in $\bar K_\pP$. We let $Z^o \subset Z$ be the complement of all primes above $p$ (our fixed prime $\pP$ among them) and we let $G_{Z^o} = \pi_1^\et(Z^o)$ with base-point given by $\overline K$. We let $\Rep^f(G_{Z^o})$ be the $\Qp$-Tannakian category of representations of $G_{Z^o}$ over $\Qp$ which are crystalline at primes above $p$, and we let $\Dd_{\et}(Z) = \Ind \Dd^b \Rep^f G_{Z^o}$. Let $C \to \one$ be an augmented algebra in $\Dd_M(Z)$ and let $C_{Z_\pP} \to \one$ be its pullback to $\Dd_M(Z_\pP)$. We have a commuting square of presheaves of pointed sets
\[
\tag{*}
\xymatrix{
\Aug(C)_\Qp \ar[r] \ar[d] & \Aug(C_{Z_\pP})_\Qp \ar[d]
\\
\mathbf{H}^1_f \big( 
G_{Z^o}, \pi(C,x)^\et
\big)
\ar[r]
&
\mathbf{H}^1_f \big( 
G_{K_\pP}, \pi(C,x)^\et
\big).
}
\]

\begin{ssProposition}
\label{DX66}
We put ourselves in the situation of paragraph \ref{DA41}, given by an open integer scheme $Z$, the category of Ind-lisse motives $\Dd_M(Z)$ over $Z$, $\Cc_M(Z) = \CAlg \Dd_M(Z)$, $x: C \to \one$ a morphism in $\Cc_M(Z)$, $\pP$ a closed point of $Z$ lying over the prime $p \in \ZZ$, and $Z^o \subset Z$ the complement of the set of all primes above $p$. We denote the descending central series of a group scheme by adding the decoration `$[n]$', numbered so that `$[1]$' corresponds to the abelianization. For each $n \ge 1$, the square \ref{DA41}(*) descends to a commuting square of presheaves of pointed sets
\[
\xymatrix{
\Aug^n(C)_\Qp \ar[r] \ar[d] & \Aug^n(C_{Z_\pP})_\Qp \ar[d]
\\
\mathbf{H}^1_f \big( 
G_{Z^o}, \pi(C,x)^{\et, [n]}
\big)
\ar[r]
&
\mathbf{H}^1_f \big( 
G_{K_\pP}, \pi(C,x)^{\et, [n]}
\big).
}
\]
\end{ssProposition}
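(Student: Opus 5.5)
The plan is to show that the maps of \ref{DA41}(*), applied to the truncation $\fkS_n C$ in place of $C$, land in torsors under the $n$-step quotients $\pi^{[n]}$. The square for $\Aug^n$ is then obtained by functoriality from the square \ref{DA41}(*) for the augmented algebra $\fkS_n C$. Concretely, I first note that the augmentation $x: C \to \one$ restricts along $\fkS_n C \to C$ to an augmentation of $\fkS_n C$, and that the constructions of paragraphs \ref{DA35}--\ref{DA39} and \ref{DA41} apply verbatim to any augmented algebra. This gives a commuting square with top row $\Aug^n(C)_\Qp \to \Aug^n(C_{Z_\pP})_\Qp$ and bottom row the $\mathbf{H}^1_f$ of $\pi(\fkS_n C, x)^\et$. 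Pullback to $Z_\pP$ commutes with the formation of the Iwanari tower, since pullback is symmetric monoidal and colimit preserving and therefore preserves $\Sym$, cofibers and pushouts of algebras. Hence the top right entry is indeed $\Aug^n(C_{Z_\pP})_\Qp$.

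The key step is to identify $\pi(\fkS_n C, x)$ (in each realization) with the quotient $\pi(C,x)^{[n]}$, or at least to produce a natural map of prounipotent groups $\pi(C,x) \to \pi(\fkS_n C,x)$ that factors through $\pi(C,x)^{[n]}$ compatibly with the Galois action. The map $\fkS_n C \to C$ induces, by functoriality of Koszul duality, a map of bar coalgebras ${_xB_x}(\fkS_n C) \to {_xB_x}(C)$. After realization and $H^0$ this gives a map of Hopf algebras, hence a homomorphism $\pi(C,x) \to \pi(\fkS_n C,x)$. To compare with the descending central series, I would pass to a Betti or \'etale fiber functor, where everything is computed by cdga's (section \ref{Sec:Anim}). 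There the realized Iwanari tower agrees with the classical Sullivan minimal model filtration: each stage is a Hirsch extension killing the kernel of cohomology in degree $2$ and adding the next layer of degree-$1$ generators. This identification is to be carried out in section \ref{Sec:CompFil}. Classically, the degree-$1$ part of the $n$th stage of the minimal model is dual to the Lie algebra of $\pi_1^{[n]}$, so $H^0$ of its bar construction is $\Oo(\pi_1^{[n]})$. I expect this comparison of the coherent Iwanari filtration with the classical one, including the indexing convention, to be the main obstacle. I would first try to verify it stage by stage by induction using Remark \ref{BB36} and the explicit Hirsch extension model \ref{LA24}.

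Given these ingredients, the map of torsors sends a cotorsor ${_{x\otimes R}B_\xi}(\fkS_n C\otimes R)$ to the pushout of the cotorsor along the group map. By naturality of Koszul duality for left modules, this is compatible with restriction along $\fkS_n C \to C$. So the square for $C$ maps to the square for $\fkS_n C$, and the latter is the asserted square, using $\pi(\fkS_n C)^{\et} \simeq \pi(C)^{\et,[n]}$. The crystalline condition is preserved because the torsors are realizations of motivic objects over $Z_\pP$, via Deglise--Niziol as in \ref{DA39}. Naturality in $R$ follows from \ref{IA55}--\ref{IA57} applied to $\fkS_n C$.
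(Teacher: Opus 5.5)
\textbf{There is a genuine gap, in your key step.} Your first step matches what the paper takes as evident in \ref{XX66}(*): apply the construction of \ref{DA41} to $\fkS_nC$ with the restricted augmentation, and note that pullback to $Z_\pP$ commutes with the Iwanari tower.

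Your identification step fails. The realized Iwanari tower does \emph{not} agree with the classical 1-minimal filtration.
\begin{enumerate}
\item By Remark \ref{BB36}, $\fkS_1C\simeq\Sym\widetilde C$ has generators in every degree where $C$ has reduced cohomology. For the four line complement, $\fkS_1C=\Sym(H\oplus I)$ contains the degree-$2$ generators $I$. By contrast, the first stage of the 1-minimal filtration is $\Sym\big(H^1C[-1]\big)$.
\item Each later stage kills the whole fiber $V_n$, including classes in degree $\ge 3$. For instance, $V_2$ of the punctured elliptic curve contains $\QQ(-2)[-3]$.
\end{enumerate}
So $\fkS_nC$ is not the $n$th 1-minimal stage, and the stage-by-stage induction you propose via \ref{BB36} and \ref{LA24} already breaks at $n=1$.

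Your fallback does not rescue this. A map $\pi(C,x)\to\pi(\fkS_nC,x)$ factoring through $\pi(C,x)^{[n]}$ yields a homomorphism $\pi^{[n]}\to\pi(\fkS_nC)$, which points the wrong way. To build the left vertical arrow you must push $\pi(\fkS_nC,x)^{\et}$-torsors forward. That requires a $G_{Z^o}$-equivariant homomorphism $\pi(\fkS_nC,x)^{\et}\to\pi(C,x)^{\et,[n]}$ whose composite with $\pi(C,x)^{\et}\to\pi(\fkS_nC,x)^{\et}$ is the canonical projection.

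The paper supplies this map with a second tower, the coherent 1-minimal tower $\fkM_nC$ of \ref{DB34}.
\begin{enumerate}
\item It is built like the Iwanari tower, but uses Hirsch extensions by $\tau^{\le 2}V^{\fkM}_n$ for the standard t-structure on $\Dd_{\et}(Z)$, after \'etale realization.
\item By construction it maps to the Iwanari tower, $\fkM_\bullet C_{\et}\to\fkS_\bullet C_{\et}$, compatibly with the maps to $C_{\et}$.
\item Example \ref{DB35} identifies it with the classical 1-minimal filtration, and \ref{PP33} then gives $\pi(\fkM_nC,x)^{\et}\simeq\pi(C,x)^{\et,[n]}$.
\item Applying $\Spec H^0B_x$ produces equivariant maps $\pi(\fkS_nC,x)^{\et}\to\pi(\fkM_nC,x)^{\et}\simeq\pi(C,x)^{\et,[n]}$. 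These are compatible in $n$ and with the projections from $\pi(C,x)^{\et}$, both globally and over $G_{K_\pP}$.
\item Pushing torsors along these maps and pasting with the square for $\fkS_nC$ gives the proposition.
\end{enumerate}
No isomorphism $\pi(\fkS_nC)\simeq\pi^{[n]}$ is needed.

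Such an isomorphism may well hold, but proving it is a statement about fundamental groups rather than an identification of towers. For example, one would show that generators of degree $\ge2$ do not change $\pi_1$, and that each Hirsch extension acts centrally on $\pi_1$. Also, computing after a Betti or \'etale fiber functor, as you suggest, would leave the Galois-equivariance of the identification to be checked separately. Forming the comparison tower in $\Cc_{\et}(Z)$ avoids that issue.
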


\subsection{Comparison of filtrations}
\label{Sec:CompFil}
\label{XX66}
Recall that, by definition 
\[
\Aug^n(C) = \Aug(\fkS_nC) = \pi_0 \Aaug(\fkS_n C)
\]
is the presheaf of connected components of the space of augmentations of the $n$th step in the Iwanari tower of $C$. In proving Proposition \ref{DX66}, we take as evident, a commuting square
\[
\tag{*}
\xymatrix{
\Aug(\fkS_nC)_\Qp \ar[r] \ar[d] & \Aug(\fkS_nC_{Z_\pP})_\Qp \ar[d]
\\
\mathbf{H}^1_f \big( 
G_{Z^o}, \pi(\fkS_nC,x)^{\et}
\big)
\ar[r]
&
\mathbf{H}^1_f \big( 
G_{K_\pP}, \pi(\fkS_nC,x)^{\et}
\big).
}
\] 
What remains is to compare the sequence of maps
\[
\pi(\fkS_1C,x)^{\et}
\from
\pi(\fkS_2C,x)^{\et} 
\from 
\cdots
\from
\pi(C,x)^{\et}
\]
with the descending central series. To this end, we discuss a second tower, the \emph{1-minimal} tower, which serves as a bridge between the Sullivan tower on the one hand, and the descending central series of the unipotent fundamental group on the other hand. The proof of Proposition \ref{DX66} follows this discussion in paragraph \ref{DX66proof}.

\sspar{Definition of coherent 1-minimal tower}
\label{DB34}
As in paragraph \ref{BB44}, let $\Dd \in \CAlg (\PrLst)_{\Dd(F)/}$ be a presentably symmetric monoidal stable $F$-linear \oo-category over a field $F$ of characteristic 0 and let $\Cc = \CAlg(\Dd)$. Suppose given a t-structure on $\Dd$ with associated cohomological truncation functors $\tau^{\ge i}$, $\tau^{\le i}$. Fix a commutative algebra $C \in \Cc$ and assume $C$ is cohomologically connected with respect to the given t-structure ($H^i(C) \simeq 0$ for $i <0$, $H^0(C) \simeq \one$). We define the \emph{coherent 1-minimal tower of $C$} below left 
\[
\xymatrix
@ R = 4ex
{
C 
& V^\fkM_n \ar[r] & \fkM_n C \ar[r] & C \ar[r] & V^\fkM_n[1]
\\
\vdots \ar[u] && \fkM_{n+1}C  & \one \ar[l] 
\\ 
\fkM_2 C \ar[u] & \Sym ( \tau^{\le 2} V^\fkM_2) 
\ar[l] & 
\fkM_n C \ar[u] & 
\Sym (\tau^{\le 2}V^\fkM_n) 
\ar[l] \ar[u].
\\
\fkM_1 C \ar[u] & 
\Sym (\tau^{\le 2}V^\fkM_1) 
\ar[l] 
\\
\one \ar[u] & 
\Sym (\tau^{\le 2}V^\fkM_0) 
\ar[l]
}
\]
via the exact triangles in $\Dd$ and the coherent Hirsch extensions in $\Cc$ to the right. By construction, the coherent 1-minimal tower comes equipped with morphisms
\[
\tau^{\le 2} V_i^\fkM
\to
V_i
\quad \text{in } \Dd,
\qandq
\fkM_i C \to \fkS_i C
\quad \text{in } \Cc,  
\]
as well as a morphism of diagrams
\[
\tag{*}
\big\{\fkM_i C, \Sym (\tau^{\le 2} V_i^\fkM) \big\}_{i \ge 0}
\to
\big\{\fkS_i C, \Sym ( V_i) \big\}_{i \ge 0}
\]
in $\Cc$. 

\begin{ssExample}
\label{DB35}
Continuing with the situation and the notation of paragraph \ref{DB34}, suppose $\Dd = \Dd(F)$ is the derived \oo-category of $F$-\vectorspaces. Then $C$ may be modeled by a minimal cdga $M$, and the coherent 1-minimal tower may be modeled by a series of Hirsch extensions
\[
F \subset M_1 \subset M_2 \subset \cdots
\]
(which converges to $M$ only if $M$ is generated by $M^1$) known as the \emph{1-minimal filtration},
given as follows (see the discussion following Proposition 5.9 of Morgan \cite{Morgan}): $M_1$ is the cdg subalgebra generated by $Z^1M = H^1M$, and in terms of $M_i$, $M_{i+1}$ is the cdg subalgebra
\[
M_{i+1}=M_i \langle d_M\inv (M_i)^2 \rangle 
\subset M
\]
generated by $M_i + \set{x \in M^1}{dx \in (M_i)^2}$.

The first step $F \subset M_1$ is a Hirsch extension $F \subset F \langle \int W_0 \rangle$ in an evident way: we take $W_0$ to be $Z^1M$ placed in cohomological degree $2$ and we consider the zero map $W_0 \to F = M_0$. This means that $M_1 = \bigwedge W_0$ with $W_0$ placed in cohomological degree $1$,  and differential $d = 0$. The inclusion $Z^1M \subset M_1$ gives rise to an isomorphism of cdga's $F \langle W_0 \rangle \xto{\simeq} M_1$. 

Exhibiting the subsequent steps as Hirsch extensions requires arbitrary choices. For this, let $\Oo \subset M$ be any cdg subalgebra containing $Z^1M$ (our \textit{old} cdga), and let $\Nn = \Oo \langle d_M\inv \Oo^2 \rangle$ be the cdg subalgebra of $M$ generated by $\Oo + d_M\inv \Oo^2$ (our \textit{new} cdg subalgebra). We will exhibit $\Oo \to \Nn$ as a Hirsch extension. 

Let $W \subset Z^2\Oo$ be a vector subspace such that the projection $Z^2\Oo \surj H^2\Oo$ induces an isomorphism
\[
W \xto{\simeq} \ker ( H^2\Oo \to H^2 M).
\]
Then $W$ is contained in the image of the differential $M^1 \xto{d_M} M^2$, so we may choose a subspace $\int W \subset d_M\inv \Oo^2 \subset  M^1$ mapping isomorphically to $W$. 

Placing $W$ in cohomological degree $2$, we use the inclusion $W \subset \Oo$ to define the Hirsch extension
\[
\Oo \subset \Oo \langle \int W \rangle,
\]
and we use the inverse of $d_M: \int W \xto{\simeq} W$ to define a map 
\[
\Oo \langle \int W \rangle \to \Oo \langle d_M\inv \Oo^2 \rangle.
\]
Its injectivity follows from our assumption that the underlying graded-commutative algebra of $M$ is free, and the image evidently equals the cdg subalgebra of $M$ generated by $\Oo + \int W$.

For the surjectivity, consider $x \in M^1$ such that $dx \in \Oo^2$. Then there's an element $w \in W$ representing the class of $dx$ in $H^2\Oo$ so that
\[
dx = w + dy 
\]
for some $y \in \Oo^1$. Thus $d(x - y - \int w) = 0$ so 
\[
x =  y +z +\int w 
\]
for some $z \in Z^1M \subset \Oo$. We've succeeded in writing $x$ as a sum of an element of $\Oo$ and an element of $\int W$. 

It is now straightforward to check the equivalence of the 1-minimal filtration (*) and the coherent 1-minimal tower of Definition \ref{DB34}.
\end{ssExample}

\subsubsection{Review of Koszul duality of cdga's}
\label{PP33}
We summarize a few features of Koszul duality for cdga's over a field $F$ of characteristic 0 in the sense of Loday-Vallette \cite{LodayVallette}. Let $x:A \to F$ be an augmented cdga. Then there's an associated Koszul dual differential graded commutative Hopf algebra $B_x$, as well as an associated Koszul dual dg Lie coalgebra $L_x$. The dg Hopf algebra $B_x$ is again connected, and $H^0(B_x)$ retains the structure of a commutative Hopf algebra. We define \emph{the unipotent fundamental group of $A$} by 
\[
\pi(A,x) = \Spec H^0(B_x),
\]
a prounipotent $F$-group. There's a canonical isomorphism
\[
\Lie \pi(A,x) \simeq H^0(L_x)^\lor. 
\]
The constructions $B_x$, $L_x$ are functorial in $x:A \to F$.

If $M \to A$ is a 1-minimal model (denote the composition $M \to A \to F$ again by $x$), then the induced map of unipotent fundamental groups
\[
\pi(M,x) \from \pi(A,x)
\]
is an isomorphism.

If $A$ is 1-minimal, then the Koszul dual dg Lie coalgebra $L_x$ may be constructed in an elementary way. By definition, the underlying algebra $A \simeq \Sym(E)$ is an exterior algebra $A \simeq \bigwedge E$, and the differential defines a map
\[
d: E \to \bigwedge^2 E
\]
which makes $E$ into a Lie coalgebra; we define $L_x$ to be the Lie coalgebra $(E,d)$ placed in degree $0$. 

Returning to an augmented cdga $x:A \to F$ with 1-minimal model $M \to A$, let 
\[
\xymatrix{
F \ar[r] & \fkM_1 M \ar[drrr]_{x_1} \ar[r]  & \fkM_2 M  \ar[drr]^-{x_2} \ar[r]  & \cdots \ar[r] & M \ar[d]^{x}
\\
&&&& F
}
\]
be the 1-minimal filtration of $M$ with induced augmentations $x_i$ as shown. Then the induced sequence of Lie algebras 
\[
L_{x_1}^\lor \from L_{x_2}^\lor \from \cdots \from L_x^\lor
\]
is the sequence of quotients along the descending central series of $L_x^\lor$. This is inherent in the work of Sullivan \cite{Sullivan} and stated explicitly by Morgan \cite[paragraph 5.11]{Morgan}.

\subsubsection{Proof of Proposition \ref{DX66}}
\label{DX66proof}
After applying \'etale realization 
\[
\Re_\et: \Dd_M(Z) \to \Dd_\et(Z)
\]
(\ref{DA41}), we may use the standard t-structure on $\Dd_\et(Z)$ to form the 1-minimal tower. We may thus consider the map of towers \ref{DB34}(*) in $\Cc_\et(Z) := \CAlg \Dd_\et(Z)$: 
\[
\xymatrix{
\Qp  \ar[r] & \fkS_1 C_\et  \ar[r] & \fkS_2 C_\et  \ar[r] & \cdots  \ar[r] & C_\et 
\\
\Qp \ar@{=}[u] \ar[r] & 
\fkM_1 C_\et  \ar[r] \ar[u] & \fkM_2 C_\et  \ar[r] \ar[u] & \cdots  \ar[r] & C_\et \ar@{=}[u]
}
\]
Applying $\Spec H^0B_x(-)$ (where in each case, $x$ denotes the base-point induced by $x$), we obtain a diagram
\[
\xymatrix{
\pi(\fkS_1 C, x)^\et \ar[d] &
\pi(\fkS_2 C, x)^\et \ar[l] \ar[d] &
\cdots \ar[l] &
\pi(C, x)^\et \ar[l] \ar[d]
\\
\pi(\fkM_1 C, x)^\et &
\pi(\fkM_2 C, x)^\et \ar[l] &
\cdots \ar[l] &
\pi(C, x)^\et. \ar[l]
}
\]
of prounipotent $\Qp$-groups with $G_{Z^o}$-action and $G_{Z^o}$-equivariant maps. By paragraph \ref{PP33}, the bottom row agrees with the sequence
\[
\pi(C,x)^{\et, [1]} \from
\pi(C,x)^{\et, [2]} \from
\cdots \from
\pi(C,x)^\et
\]
of quotients along the descending central series of $\pi(C,x)^\et$. Restricting to $G_{K_\pP}$-representations, we get a similar diagram of prounipotent groups in crystalline $G_{K_\pP}$-representations. The global and local versions together give rise to commuting squares of presheaves of pointed sets (compatibly for varying $n$) 
\[
\xymatrix{
\mathbf{H}^1_f \big( 
G_{Z^o}, \pi(\fkS_nC,x)^{\et}
\big)
\ar[r] \ar[d]
&
\mathbf{H}^1_f \big( 
G_{K_\pP}, \pi(\fkS_nC,x)^{\et}
\big)
\ar[d]
\\
\mathbf{H}^1_f \big( 
G_{Z^o}, \pi(C,x)^{\et, [n]}
\big)
\ar[r]
&
\mathbf{H}^1_f \big( 
G_{K_\pP}, \pi(C,x)^{\et, [n]}
\big).
}
\]
We conclude by pasting the above to diagram \ref{XX66}(*). \qed 

\subsection{Example: a K-theoretic criterion for finiteness}
\label{Sec:Sample}

Let $Z$ be the complement of $s$ primes in $\Spec \ZZ$, let $X \to Z$ be the complement of the zero-section in an elliptic scheme of rank $r$, and let 
\[
k^{(2)}_2(X) = \dim_\QQ K^{(2)}_2(X).
\]
By paragraph \ref{EB17}, $\Aug^3(X)$ is representable by a finite type affine $\QQ$-variety of dimension
\[
\dim \Aug^3(X) = r + s + k^{(2)}_2(X). 
\]
By Kim \cite{kimii}, 
\[
\mathbf{H}^1\big(
\MFphi, \pi_1^{[n]}(X,x)^\dR_\Qp
\big)
\simeq F^0 \backslash \pi_1^{[n]}(X,x)^\dR_\Qp
\]
at any $Z$-integral base point $x$. By Beacom \cite{beacom2020computation}, 
\[
\dim F^0 \backslash \pi_1^{[3]}(X,x)^\dR_\Qp
=4.
\]
See Corwin \cite{CorwinEll} for a more detailed summary of this computation. Hence, whenever
\[
r + s + k^{(2)}_2(X) < 4,
\]
Theorem \ref{IN13} implies that there exists a nonzero function 
\[
f: \mathbf{H}^1 \big( \MFphi, \pi_1^{[3]}(X,x)^\dR_\Qp \big)
\to 
\AA^1_\Qp
\]
such that
\[
f \circ \ka_\Fphi \circ \la^a = 0.
\]
Again by Kim \cite{kimii}, the locally analytic map $ \ka_\Fphi \circ \de_\Fphi$ has dense image on every residue disk, so the locally analytic function $f \circ \ka_\Fphi \circ \de_\Fphi$ is nonzero on every residue disk, hence vanishes at only finitely many points. This shows that $X(Z)$ is finite. 

\printbibliography

\end{document}